\documentclass[a4paper,12pt,reqno]{amsart}
\usepackage[T1]{fontenc}
\usepackage[utf8]{inputenc}
\usepackage[libertinus,vvarbb]{newtx}
\usepackage[margin=1in]{geometry}
\usepackage{enumitem}
\usepackage{graphicx}
\usepackage[dvipsnames]{xcolor}
\usepackage{tikz}
\usepackage{xparse}
\usepackage{xurl}
\usepackage[hidelinks]{hyperref}
\usepackage{tikz}
\usepackage{xcolor}
\usepackage{cite}
\usepackage{changes} 

\definecolor{MyBlue}{RGB}{200, 225, 255}
\definecolor{MyGreen}{RGB}{200, 245, 200}
\definecolor{MyOrange}{RGB}{255, 225, 180}

\usetikzlibrary{decorations.markings,decorations.pathmorphing,nfold}
\makeatletter
\tikzset{
 side by side path/.style args={#1:#2}{
  postaction={path only,draw=#1,offset=+.5\pgflinewidth},
  postaction={path only,draw=#2,offset=+-.5\pgflinewidth}
 },
 side by side/.style={path only,side by side path={#1}},
 offset/.code=\tikz@addoption{%
  \pgfgetpath\tikz@temp%
  \pgfsetpath\pgfutil@empty%
  \pgfoffsetpath\tikz@temp{#1}%
 }
}
\makeatother

\numberwithin{equation}{section}

\newtheorem{theorem}{Theorem}[section]
\newtheorem{lemma}[theorem]{Lemma}
\newtheorem{corollary}[theorem]{Corollary}
\newtheorem{proposition}[theorem]{Proposition}

\theoremstyle{definition}

\DeclareMathOperator{\im}{Im}

\DeclareMathOperator{\arccot}{arccot}

\newcommand{\ind}{\mathbb{1}}

\newcommand{\C}{\mathbb{C}}
\newcommand{\R}{\mathbb{R}}
\newcommand{\Z}{\mathbb{Z}}

\newcommand{\disk}{\mathbb{D}}
\newcommand{\torus}{\mathbb{T}}

\newcommand{\ph}{\varphi}
\newcommand{\eps}{\varepsilon}
\newcommand{\thet}{\vartheta}

\mathchardef\mathhyphen="2D

\newcommand{\cm}{\mathscr{C}\mspace{-4mu}\mathscr{M}}
\newcommand{\am}{\mathscr{A}\mspace{-4mu}\mathscr{M}}
\newcommand{\amcm}{\am\mspace{-2mu}/\mspace{-1mu}\cm}

\renewcommand{\leq}{\leqslant}
\renewcommand{\geq}{\geqslant}

\newcommand{\lv}{\lvert}
\newcommand{\rv}{\rvert}
\newcommand{\lV}{\lVert}
\newcommand{\rV}{\rVert}

\newcommand{\floor}[1]{\left\lfloor #1 \right\rfloor}

\NewDocumentCommand{\formula}{ssom}{%
 \IfBooleanTF{#1}{%
  \IfBooleanTF{#2}{%
   \IfValueTF{#3}%
    {\begin{align}\label{#3}\begin{gathered}#4\end{gathered}\end{align}}%
    {\begin{gather}#4\end{gather}}%
  }{%
   \IfValueTF{#3}%
    {\begin{align}\label{#3}\begin{aligned}#4\end{aligned}\end{align}}%
    {\begin{gather*}#4\end{gather*}}%
  }%
 }{%
  \IfValueTF{#3}%
   {\begin{align}\label{#3}#4\end{align}}%
   {\begin{align*}#4\end{align*}}%
 }%
}
\begin{document}

\title[Eigenvalues of Hermitian Toeplitz matrices with Fisher--Hartwig symbols]{Eigenvalues of Hermitian Toeplitz matrices with Fisher--Hartwig symbols}
\author{Jacek Wszoła}
\thanks{Work supported by the Polish National Science Centre (NCN) grant no.\@ 2023/49/B/ST1/04303}
\address{\normalfont Jacek Wszoła \\ Department of Analysis and Stochastic Processes \\ Wrocław University of Science and Technology \\ Wybrzeże Wyspiańskiego 27 \\ 50-370 Wrocław, Poland}
\email{jacek.wszola@pwr.edu.pl}
\date{\today}
\keywords{Toeplitz matrix, Fisher–Hartwig symbol, eigenvalue asymptotics, spectral analysis, Dirichlet--Neumann bracketing}
\subjclass[2020]{
15A18, 
39A12, 
41A60, 
47A75, 
15B05, 
47B35} 

\begin{abstract}
We investigate the eigenvalues of Hermitian Toeplitz matrices generated by the symmetric Fisher--Hartwig symbol $\hat a(z) = (1-z)^{\alpha/2}(1-z^{-1})^{\alpha/2}$ for $\alpha \in (0,2)$. Using Dirichlet--Neumann bracketing of the discrete Laplacian, we establish explicit, non-asymptotic bounds for the individual eigenvalues. As a consequence, we prove that all eigenvalues are simple. We also obtain a two-term approximation for every
eigenvalue, with explicit bounds on the remainder, as the size of the matrix tends to infinity. While known results are limited to $\alpha > 1$, we bridge this gap by covering the full range of $\alpha \in (0,2)$. Our approach uses a construction of approximate eigenvectors that has not previously been applied in this setting.

\end{abstract}

\maketitle

\section{Introduction}

\subsection{Main results}

A Toeplitz matrix is a matrix in which each descending diagonal from left to right is constant. More precisely, given a sequence $(a_k: k \in \Z)$ in $\ell^2(\Z)$, we define the infinite Toeplitz matrix $T(a) = (a_{j-k}:j,k \geq 0)$. By $T_N(a) = (a_{j-k}: 0 \leq j,k \leq N-1)$ we denote the corresponding Toeplitz matrix of size $N$, which is a finite truncation of $T(a)$. The elements of the sequence $(a_k)$ are the Fourier coefficients of a certain function $\hat a \in L^2(\torus)$, where $\torus$ is the complex unit circle. The function $\hat a$ is often called the symbol of $T(a)$.

The spectral theory of Toeplitz matrices has been an area of intense interest for over a century, tracing its origins to the seminal works of Szeg\H{o} \cite{szego} in 1915. In recent decades, numerous papers have investigated the spectral properties (such as individual eigenvalues, singular values, eigenvectors, determinants or condition numbers) of truncated Toeplitz matrices $T_N(a)$ as their size $N$ tends to infinity. A comprehensive summary of results concerning the spectral theory of Toeplitz matrices and operators has been collected in the monographs by Böttcher and Silbermann \cite{bs, bs2}. Toeplitz matrices have also found applications in many areas such as stochastic processes, time series analysis, signal and image processing, quantum and statistical mechanics, or numerical solutions of differential and integral equations. For practical examples, see e.g. \cite{poland, dahlhaus, hno, dik, bottcher, serra}.

In recent years, symbols with power singularities have attracted particular interest. The most important examples in this class are symbols with a single Fisher--Hartwig singularity, given by
\formula[eq:fh:symbol]{
\hat a (z) = (1-z)^{\alpha/2} (1-z^{-1})^{\beta/2}
}
for $|z|=1$ and $\alpha, \beta \in (0,2)$. We will refer to \eqref{eq:fh:symbol} as the Fisher--Hartwig symbol. Symbols of this type were first introduced in \cite{fh}, in 1969, and they have been intensively studied over the years \cite{bw, dgk, kadanoff, bbg2023}. We refer the reader to \cite{bbg2023} for a comprehensive survey of recent results on the eigenvalues of matrices generated by Fisher--Hartwig symbols.

In this paper, we consider the class of Hermitian Toeplitz matrices generated by \eqref{eq:fh:symbol} with $\alpha = \beta$ so that the corresponding symmetric Fisher--Hartwig symbol reads
\formula[eq:G]{
g(\thet) = \hat a (e^{i\thet}) = \biggl( 2 \sin \frac{\thet}{2} \biggr)^{\alpha}
}
for $0 \leq \thet \leq 2\pi$.

In \cite{bg2022}, the authors investigated the asymptotic behaviour of eigenvalues for a broader class of symbols, known as \textit{simple loop} symbols. In their framework, the symbol is represented by \eqref{eq:G} with an additional analytic component $g_0(z)$. Our case corresponds to the situation where $g_0(z)$ is a constant function. To study the eigenvalues, the authors use the so-called \textit{simple loop method} which was introduced in \cite{bgm2010} and later developed in \cite{bbgm2015, bbgm2016, bbgm2017, bgmmrs2019, bg2022}.  The results in~\cite{bg2022}, however, cover only the parameter range $\alpha \in (1,2)$, and hold only for the eigenvalues with indices $n \geq \eps N$ for some $\eps>0$. In the present work, we extend these results to the entire interval $\alpha \in (0,2)$. Moreover, we eliminate the index threshold restrictions.

Our first result is the following theorem on separation of the eigenvalues, found using Dirichlet--Neumann bracketing.

\begin{theorem}
\label{thm:main:2}
Let $T_N(a)$ be the Toeplitz matrix corresponding to the symmetric Fisher--Hartwig symbol with $\alpha \in (0,2)$. Denote by $\lambda_n$ the eigenvalues of $T_N(a)$ arranged in a non-decreasing order. Then
\formula{
\biggl( 2 \sin \frac{(n-1)\pi}{2N} \biggr)^{\alpha} \leq \lambda_n \leq \biggl( 2\sin \frac{n\pi}{2(N+1)}\biggr)^{\alpha}
}
for all $1 \leq n \leq N$. In particular, all $\lambda_n$ are simple.
\end{theorem}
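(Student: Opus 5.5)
The plan is to recognise $T_N(a)$ as the compression of a fractional power of the discrete Laplacian, and to bracket that compression between fractional powers of the Neumann and Dirichlet Laplacians on a path. Set $s=\alpha/2\in(0,1)$. Since $(2\sin\frac{\thet}{2})^{2}=2-2\cos\thet$ is the symbol of the discrete Laplacian $L$ on $\ell^2(\Z)$, given by $(Lu)_k=2u_k-u_{k-1}-u_{k+1}$, we have $g=(2-2\cos\thet)^{s}$. By the spectral theorem (Fourier transform), the Laurent operator with symbol $g$ is exactly $L^{s}$. Hence $T_N(a)=PL^{s}P|_{\operatorname{ran}P}$, where $P$ is the orthogonal projection onto $\ell^2(\{0,\dots,N-1\})$. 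Two compressions of $L$ give the brackets:
\begin{itemize}
\item $L_D=PLP|_{\operatorname{ran}P}$ is the Dirichlet Laplacian on the path, with eigenvalues $(2\sin\frac{n\pi}{2(N+1)})^2$.
\item $L_{\mathcal N}$, obtained from $L$ by deleting the two edges $\{-1,0\}$ and $\{N-1,N\}$ and restricting to $\operatorname{ran}P$, is the Neumann Laplacian on the path, with eigenvalues $(2\sin\frac{(n-1)\pi}{2N})^2$ for $n=1,\dots,N$.
\end{itemize}
I will show, in the sense of quadratic forms on $\operatorname{ran}P$,
\[
L_{\mathcal N}^{\,s}\leq T_N(a)\leq L_D^{\,s}.
\]
The min--max principle then gives the stated two-sided bounds for each $\lambda_n$.

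\emph{Lower bound.} Write $L=\widetilde L+E$, where $\widetilde L=L_{\mathcal N}\oplus L_-\oplus L_+$ is the Laplacian of $\Z$ with the two edges removed. The remainder $E=\sum_{\text{removed edges}}(\delta_j-\delta_{j+1})(\delta_j-\delta_{j+1})^{*}$ is positive semidefinite, so $L\geq\widetilde L\geq0$. Since $t\mapsto t^{s}$ is operator monotone for $0<s<1$ (Löwner--Heinz), $L^{s}\geq\widetilde L^{s}=L_{\mathcal N}^{\,s}\oplus L_-^{s}\oplus L_+^{s}$. Compressing both sides by $P$ preserves the inequality, and the compression of the right-hand side is exactly $L_{\mathcal N}^{\,s}$. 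Thus $T_N(a)\geq L_{\mathcal N}^{\,s}$.

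\emph{Upper bound.} This is the step I expect to need the most care: a compression of a function is not the function of the compression. I will use the integral representation
\[
t^{s}=c_s\int_0^\infty\Bigl(1-\mu(t+\mu)^{-1}\Bigr)\mu^{s-1}\,d\mu,\qquad c_s>0 .
\]
This reduces the problem to showing, for every $\mu>0$,
\[
P(L+\mu)^{-1}P\geq\bigl(P(L+\mu)P\bigr)^{-1}=(L_D+\mu)^{-1}\quad\text{on }\operatorname{ran}P .
\]
That inequality follows from the block (Schur complement) inversion formula: if $A=L+\mu$ is written in blocks with respect to $\operatorname{ran}P\oplus\operatorname{ran}P^{\perp}$, then the top-left block of $A^{-1}$ equals $(A_{11}-A_{12}A_{22}^{-1}A_{21})^{-1}\geq A_{11}^{-1}$, because $A_{12}A_{22}^{-1}A_{21}\geq0$ and inversion is order-reversing on positive definite operators. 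Equivalently, one can invoke the Hansen--Pedersen--Jensen inequality for the operator concave function $t^{s}$. Integrating against $\mu^{s-1}\,d\mu$ gives $PL^{s}P\leq L_D^{\,s}$. Convergence of the integral is harmless, since everything is bounded and the integrand is $O(\mu^{s-1})$ near $0$ and $O(\mu^{s-2})$ at infinity.

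\emph{Simplicity.} We have $\lambda_n\leq(2\sin\frac{n\pi}{2(N+1)})^{\alpha}<(2\sin\frac{n\pi}{2N})^{\alpha}\leq\lambda_{n+1}$. The strict middle inequality holds because $\frac{n\pi}{2(N+1)}<\frac{n\pi}{2N}\leq\frac{\pi}{2}$ and $\sin$ is strictly increasing on $[0,\frac{\pi}{2}]$. Hence all eigenvalues are distinct.
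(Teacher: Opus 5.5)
Your proof is correct and follows the paper's argument. The paper likewise uses Dirichlet--Neumann bracketing $L_{\mathcal N}^{s}\leq T_N(a)\leq L_D^{s}$, with L\"owner--Heinz operator monotonicity for the lower bound and the Jensen (Hansen--Pedersen) compression inequality for the upper bound, followed by min--max and the same chain of inequalities for simplicity. The only differences are cosmetic: you compare with the edge-deleted Laplacian rather than the Neumann Laplacian extended by zero, and you prove the compression inequality directly from the resolvent integral representation of $t^{s}$ and the Schur complement, where the paper cites it from Bhatia.
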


In a recent paper by Kwaśnicki and the author \cite{kw2026}, a generalized eigenvector expansion was established for a class of (non-Hermitian) infinite Toeplitz matrices $T(a)$, where sequences $(a_k: k\geq 1)$ and $(a_{-k}: k\geq1)$ are completely monotone. In our sign convention, these results are applied to the sequence $(-a_k)$ and the corresponding Toeplitz matrix $-T(a)$; see Section \ref{sec:expansion}. This expansion inspired our method for constructing approximate eigenvectors. Specifically, our approach relies on gluing the eigenvectors in a smooth way. This technique is a direct adaptation of the method already known in the literature from e.g. \cite{kkms2010, kwasnicki2012, kkm2013, kkm2016}, where it was successfully used to derive the eigenvalue asymptotics for the fractional Laplacian and related operators on an interval. However, to the best of the author's
knowledge, these methods have not been used to study the eigenvalues of Toeplitz matrices.

The next result provides an asymptotic formula for the eigenvalues in terms of the solutions to an implicit equation, where the error depends on $n$ and $N$.

\begin{theorem}
\label{thm:main}
Consider the following phase equation:
\formula[eq:arg:main]{
(N+1) \thet + \eta(\thet) = n\pi,
}
where
\formula{
\eta (\thet) & = \frac{2}{\pi} \int_0^\infty \frac{t}{1+t^2} \, \frac{\cos (\thet/2)}{\sqrt{t^2+\sin^2 (\thet/2)}} \, \arccot \frac{t^\alpha - \cos(\alpha\pi/2)}{\sin(\alpha\pi/2)} \, dt.
}
Under the assumptions of Theorem \ref{thm:main:2}, for $N \geq 2$ and $1 \leq n \leq N$,
\begin{enumerate}
    \item\label{it:1} the phase equation \eqref{eq:arg:main} has exactly one solution $\thet_n \in (0,\pi)$; 
    \item\label{it:2} we have the following asymptotics:
    \formula[eq:main]{
    \lambda_n = g(\thet_n) + E_1(n,N),
    }
    where
    \formula[eq:g]{
    E_1(n,N) = \mathscr{O} \biggl[ \frac{1}{(2-\alpha)\alpha^{3/2}}\, \frac{1}{nN^{\alpha}}\biggl(1-\frac{n}{N+1}\biggr) \biggr].
    }
\end{enumerate}
\end{theorem}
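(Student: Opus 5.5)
For part \eqref{it:1} I will study $F(\thet) = (N+1)\thet + \eta(\thet)$ on $[0,\pi]$. The first step is to collect elementary properties of $\eta$. The factor $\arccot\bigl((t^\alpha-\cos(\alpha\pi/2))/\sin(\alpha\pi/2)\bigr)$ lies in $(0,\pi)$ and does not depend on $\thet$; it tends to $\pi - \alpha\pi/2$ as $t\to 0$. The kernel $\cos(\thet/2)/\sqrt{t^2+\sin^2(\thet/2)}$ vanishes at $\thet=\pi$. Hence $\eta(\pi)=0$ and $\eta(\thet)\ge 0$. As $\thet\to0^+$, $\eta(\thet)$ has a finite limit: substituting $t = s\sin(\thet/2)$ near $0$ gives $\eta(0^+) = (1-\alpha/2)\pi/2 \in (0,\pi/2)$, up to checking constants. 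I then differentiate under the integral and bound $|\eta'(\thet)|$. The target is $\eta'(\thet) > -(N+1)$ for $N \ge 2$, so that $F$ is strictly increasing. Since $F(0^+) = \eta(0^+) < \pi$ and $F(\pi) = (N+1)\pi > N\pi$, monotonicity and continuity give exactly one root $\thet_n \in (0,\pi)$ for each $1\le n\le N$. I expect $|\eta'|$ to behave like $\mathscr{O}(1/\thet)$ near $0$, which is the delicate point. If the derivative bound is too weak, I will instead show that $\thet\,\eta'(\thet)$ is bounded by a constant smaller than $\pi$, and that $F$ cannot have a flat spot because $F(\thet) \ge (N+1)\thet$.

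For part \eqref{it:2} I will follow the approximate-eigenvector strategy described in the introduction. Take the generalized eigenfunctions $F_\thet$ of the infinite Toeplitz matrix $T(a)$ from \cite{kw2026} (Section~\ref{sec:expansion}). These should have the form $F_\thet(k) = \sin(k\thet + \eta(\thet)/2) + G_\thet(k)$, with $G_\thet$ a completely monotone-type correction that decays in $k$. I will glue the profile started at the left end $k=0$ with its mirror image started at $k=N-1$, using a smooth cutoff $\psi$ supported in the middle. The phase equation \eqref{eq:arg:main} is exactly the condition that the two sine waves agree, up to sign, in the overlap region. The result is a vector $v_n$ with
\[
\lV T_N(a) v_n - g(\thet_n) v_n\rV \le \delta_n \lV v_n\rV .
\]
A standard spectral argument then places some eigenvalue within $\delta_n$ of $g(\thet_n)$. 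Next I need to identify which eigenvalue this is. Theorem~\ref{thm:main:2} pins $\lambda_n$ between $g((n-1)\pi/N)$ and $g(n\pi/(N+1))$, and the phase equation places $\thet_n$ in a comparable window. Together with simplicity and a gap between consecutive windows, this should force the nearby eigenvalue to be $\lambda_n$, at least once $\delta_n$ is smaller than half the gap. For the remaining small-$N$ or borderline cases, the bracketing bounds of Theorem~\ref{thm:main:2} already yield $|\lambda_n - g(\thet_n)|$ of the claimed order directly.

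The main obstacle is making $\delta_n$ of size $\frac{1}{nN^\alpha}\bigl(1-\frac{n}{N+1}\bigr)$ uniformly in $n$, including $n$ small. The error has two sources. One is the commutator of $T_N(a)$ with the cutoff, which is controlled by the decay $|a_k|\sim c\,k^{-1-\alpha}$. The other is the truncated tails $G_\thet$ coming from the opposite end, which require explicit pointwise bounds such as $|G_\thet(k)| \lesssim 1/(\alpha (k\thet)^{?})$. I will take the cutoff transition length to be of order $N$ and estimate these terms against $g'(\thet_n)\approx \alpha\thet_n^{\alpha-1}$. The factors $(2-\alpha)^{-1}$ and $\alpha^{-3/2}$ should arise from the constants in the tail estimates for $G_\thet$ and in $|a_k|$. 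The factor $1-n/(N+1)$ reflects $\cos(\thet_n/2)$ vanishing near $\thet=\pi$, which makes the correction small there.
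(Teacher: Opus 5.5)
Your architecture matches the paper's. Part (1) uses monotonicity of $(N+1)\thet+\eta(\thet)$. Part (2) glues $v_{\thet_n}$ to its reflection with a cutoff, uses the phase equation to match the sine parts, bounds the residual, identifies the eigenvalue, and uses bracketing for leftover indices.

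Part (1) works once you differentiate, but $\eta'$ does not blow up near $0$. In fact $-\eta'(\thet)=\frac1\pi\int_0^\infty\frac{t\sin(\thet/2)}{(t^2+\sin^2(\thet/2))^{3/2}}\,p_\alpha(t)\,dt\le\frac{2-\alpha}{2}$, since the kernel has integral $1$ and $p_\alpha\le p_\alpha(0)=\frac{(2-\alpha)\pi}{2}$. Hence $F'\ge N+\frac\alpha2$. Your fallback (a bound on $\thet\eta'$ plus $F\ge(N+1)\thet$) would not give uniqueness. Two steps of part (2) do not close as you describe them.

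\textbf{Identification.} The only windows known a priori to contain $\lambda_n$ are those of Theorem~\ref{thm:main:2}. For $n\le N/2$, consecutive windows are separated only by $g(\frac{n\pi}{N})-g(\frac{n\pi}{N+1})\approx\alpha n^{\alpha}N^{-\alpha-1}$, while $\delta_n\approx\frac{1}{(2-\alpha)\alpha^{3/2}}\frac{1}{nN^{\alpha}}$. So ``$\delta_n$ below half the gap'' holds only when $(2-\alpha)\alpha^{5/2}n^{1+\alpha}\ge cN$.

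Bracketing gives, for $2\le n\le N/2$, $|\lambda_n-g(\thet_n)|\le g(\frac{n\pi}{N+1})-g(\frac{(n-1)\pi}{N})\approx\alpha n^{\alpha-1}N^{-\alpha}$. This is of the claimed order only when $(2-\alpha)\alpha^{5/2}n^{\alpha}\le C$, a range bounded independently of $N$. Indices in between are covered by neither argument.

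The phase-equation windows $\thet_n\in[\frac{(2n-1)\pi}{2(N+1)},\frac{n\pi}{N+1}]$ (from $0\le\eta\le\frac\pi2$) do have separation $\frac{\pi}{2(N+1)}$. With $M_n=\frac{(4n+1)\pi}{4(N+1)}$, both $g(\thet_n)-g(M_{n-1})$ and $g(M_n)-g(\thet_n)$ are at least $c\,\alpha n^{\alpha-1}N^{-\alpha}(1-\frac{n}{N+1})$. This exceeds $\delta_n$ once $n>K_\alpha=(C/((2-\alpha)\alpha^{5/2}))^{1/\alpha}$. But $\lambda_n$ is not known to lie in these windows, so simplicity alone does not finish the job.

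The missing step is a counting argument:
\begin{enumerate}
\item For each $n>K_\alpha$, the disjoint interval $(g(M_{n-1}),g(M_n))$ contains the eigenvalue nearest $g(\thet_n)$.
\item Theorem~\ref{thm:main:2} gives $\lambda_1,\dots,\lambda_{\lfloor K_\alpha\rfloor}\le g(\frac{\lfloor K_\alpha\rfloor\pi}{N+1})<g(M_{\lfloor K_\alpha\rfloor})$.
\item Counting all $N$ eigenvalues forces the $n$th interval to contain exactly $\lambda_n$.
\end{enumerate}
Only then is the exceptional set $n\le K_\alpha$ bounded, and on it bracketing gives the stated constant. Converting the residual into $\delta_n$ also needs $\|\tilde v_n\|^2\ge N/4$. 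The paper gets this from $\bigl|\|\tilde v_n\|^2-\frac N2\bigr|\le\frac{CN}{n}$ (Lemma~\ref{lem:v:norm}), which only holds for $n$ beyond an absolute constant.

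\textbf{Residual.} ``Controlled by the decay $|a_k|\sim k^{-1-\alpha}$'' is not enough for the sine wave cut off at the far edge. For $0\le k\le\frac{N-1}{2}$ (the other half follows by reflection), the residual contains $\sum_{j\ge N}a_{j-k}\sin((j+1)\thet_n+\frac12\eta(\thet_n))$. Bounding this by $\sum_{j\ge N}|a_{j-k}|$ gives only $\mathscr{O}(N^{-\alpha})$ per entry, hence $\delta_n=\mathscr{O}(N^{-\alpha})$. That is roughly a factor $n$ too large and lacks the factor $1-\frac{n}{N+1}$. Near the top of the spectrum it is not even below the eigenvalue spacing, which is about $\alpha N^{-2}$ at $n=N$.

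The paper (Lemma~\ref{lem:T:hk}) sums by parts twice to reach $\frac{C\alpha}{N^{\alpha+1}\sin^2(\thet_n/2)}\bigl(\sin\frac{\thet_n}{2}\,|\cos((N+\frac12)\thet_n+\frac12\eta(\thet_n))|+\sin\thet_n+\frac1N\sin\frac{\eta(\thet_n)}{2}\bigr)$. It then uses the phase equation a second time: $|\cos((N+\frac12)\thet_n+\frac12\eta(\thet_n))|=\cos\frac{\thet_n+\eta(\thet_n)}{2}\le\cos\frac{\thet_n}{2}$, together with $\sin\frac{\eta(\thet_n)}{2}\le\cos\frac{\thet_n}{2}$. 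This is what yields $\frac{1}{nN^\alpha}(1-\frac{n}{N+1})$ for this term.

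Likewise, for the glued correction $q_k(w_{\thet_n,k}\pm w_{\thet_n,N-1-k})$, pointwise bounds on $w$ are not enough. You need smoothness on scale $N$, i.e.\ bounds on $\Delta^m w_{\thet,k}$ and on $\sum_k w_{\thet,k}$. For $\alpha\ge1$ you need second differences plus the cancellation $\sum_{0<|m|\le N/4}m\,a_m=0$ (Lemma~\ref{lem:T:g}). The factor $(2-\alpha)^{-1}$ comes from $\sum_{m\le N/4}m^{1-\alpha}$ in that second-order step.
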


Phase equations with a remainder term also appear in the literature on eigenvalue asymptotics for Toeplitz matrices. In particular, Theorem~1(ii) of \cite{bg2022} with function $\eta_{\mathrm{BG}}$ defined in Eq. (5) therein, specialised to the symmetric Fisher--Hartwig symbol, gives an equation of this type. The following consequence of Theorem~\ref{thm:main} extends that result to all $\alpha\in(0,2)$ and all $1\leq n\leq N$, with an explicit bound on the remainder.

\begin{corollary}
\label{cor:equation}
    Let $s_n = g^{-1}(\lambda_n)$, where $g^{-1}$ denotes the inverse of the restriction of $g$ to $[0,\pi]$. Under the assumptions of Theorem~\ref{thm:main}, for $1 \leq n \leq N$,
    \formula{
    (N+1) s_n + \eta(s_n) = n \pi + E_2(n,N),
    }
    where
    \formula{
    E_2(n,N) = \mathscr{O} \biggl( \frac{1}{(2-\alpha) \alpha^{5/2}} \, \frac{1}{n^\alpha} \biggr).
    }
    In particular, if $\alpha \in (1,2)$ and $n \geq \eps N$ for some $\eps \in (0,1)$, then the remainder becomes $\mathscr{O}(N^{-\alpha})$. Moreover, functions $\eta$ and $\eta_{\mathrm{BG}}$ coincide on $[0,\pi]$. 
\end{corollary}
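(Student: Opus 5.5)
The plan is to derive Corollary~\ref{cor:equation} from Theorem~\ref{thm:main} by inverting $g$ and using the monotonicity of the phase function $\Phi(\thet) = (N+1)\thet + \eta(\thet)$.

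\textbf{Step 1: reduce to a mean value estimate.} Set $s_n = g^{-1}(\lambda_n)$. Since $\Phi(\thet_n) = n\pi$, we have
\[
E_2(n,N) = \Phi(s_n) - \Phi(\thet_n) = \bigl((N+1) + \eta'(\xi)\bigr)(s_n - \thet_n)
\]
for some $\xi$ between $s_n$ and $\thet_n$. I will show that $\eta'$ is bounded on $[0,\pi]$ uniformly in $\alpha$. Differentiating under the integral sign, the only $\thet$-dependence sits in the factor $\cos(\thet/2)/\sqrt{t^2+\sin^2(\thet/2)}$. A direct estimate should give $|\eta'| \leq C$, which reduces the problem to proving $|E_2| \lesssim N\,|s_n - \thet_n|$. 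If the derivative near $t=0$ turns out to be singular, I will use instead that $\eta$ is Lipschitz with a constant that is $o(N)$.

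\textbf{Step 2: invert $g$.} By the mean value theorem for $g^{-1}$,
\[
|s_n - \thet_n| = \frac{|E_1(n,N)|}{g'(\zeta)}, \qquad g'(\zeta) = \alpha \bigl(2\sin(\zeta/2)\bigr)^{\alpha-1} \cos(\zeta/2).
\]
Here I need $\zeta$ comparable to $n/N$, and in particular bounded away from both $0$ and $\pi$ in the appropriate sense. Theorem~\ref{thm:main:2} places $\lambda_n$ between $g((n-1)\pi/N)$ and $g(n\pi/(N+1))$, so $s_n \in [(n-1)\pi/N,\, n\pi/(N+1)]$. Item~\eqref{it:2} of Theorem~\ref{thm:main} gives the analogous localisation of $\thet_n$. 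Alternatively, $\eta \in [0,c]$ together with the phase equation gives $\thet_n \approx n\pi/(N+1)$ up to $O(1/N)$.

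The case $n=1$ needs care because the lower bound from Theorem~\ref{thm:main:2} is then $0$. I will handle it by using directly that $\thet_1 \asymp 1/N$ and that $E_1$ is small relative to $g(\thet_1) \asymp N^{-\alpha}$. Since $\lambda_1 = g(\thet_1)(1 + O(\alpha^{-3/2}\cdots))$, $s_1$ is comparable to $1/N$ as long as the ratio is bounded. Otherwise I will bound $|s_1 - \thet_1| \leq \pi/N$ trivially, which is acceptable because $1/n^\alpha = 1$ when $n=1$.

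\textbf{Step 3: combine the estimates.} For $\zeta \asymp n/N$ away from $\pi$,
\[
g'(\zeta) \asymp \alpha \,(n/N)^{\alpha-1}\cos(\zeta/2), \qquad \cos(\zeta/2) \asymp 1 - \frac{n}{N+1}.
\]
This gives
\[
N|s_n - \thet_n| \lesssim N \cdot \frac{(1-n/(N+1))/(n N^\alpha)}{(2-\alpha)\alpha^{3/2} \cdot \alpha (n/N)^{\alpha-1}(1-n/(N+1))} = \frac{1}{(2-\alpha)\alpha^{5/2}}\, n^{-\alpha}.
\]
The factors $(1-n/(N+1))$ cancel, which explains their presence in $E_1$. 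The extra $\alpha^{-1}$ comes from $g'$.

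The main obstacle is the uniform two-sided comparability of $\zeta$ to $n/N$ and of $\cos(\zeta/2)$ to $1-n/(N+1)$ near the endpoints $n=1$ and $n=N$. This requires the localisation from Theorem~\ref{thm:main:2} to hold for both $s_n$ and $\thet_n$ with constants independent of $\alpha$. For $n=N$, I will check that $\pi - \thet_N \asymp 1/N$ using $\eta(\pi)=0$ (since $\cos(\pi/2)=0$) and $\eta \geq 0$.

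\textbf{Consequences.} The statement about $\alpha\in(1,2)$ follows immediately, since $n^{-\alpha} \leq (\eps N)^{-\alpha}$. For the identity $\eta = \eta_{\mathrm{BG}}$, I will compute $\eta_{\mathrm{BG}}$ from Eq.~(5) of \cite{bg2022}, which is a Cauchy-type integral of $\log$ of the ratio of the symbol to its limiting form. I will then transform it by a substitution (presumably $\sin(\sigma/2) = t$, or a contour deformation) to match the $\arccot$ integral above. Both sides vanish at $\thet=\pi$, so an alternative is to check that both solve the same phase equation up to $o(1)$ and are analytic.
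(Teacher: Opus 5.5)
Your estimate of $E_2(n,N)$ is correct and follows the paper's argument. The mean value theorem for $\eta$ gives $E_2=(N+1+\eta'(\zeta_n))(s_n-\thet_n)$, and Proposition~\ref{lem:theta:plus} already supplies $-\frac{2-\alpha}{2}\le\eta'\le 0$, so no Lipschitz fallback is needed. For $n\ge2$ both $s_n$ and $\thet_n$ lie in $[(n-1)\pi/N,\,n\pi/(N+1)]$. This makes $g'$ at the intermediate point comparable to $\alpha(n/N)^{\alpha-1}(1-n/(N+1))$ and produces the extra factor $\alpha^{-1}$.

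Two small corrections:
\begin{itemize}
\item The localisation of $\thet_n$ does not come from \eqref{eq:main}. It comes from the phase equation together with $0\le\eta(\thet)\le\min\{\pi-\thet,(2-\alpha)\pi/4\}$ (Lemma~\ref{lem:theta:bounds}), which is your own alternative.
\item For $n=1$ your first idea fails. $E_1(1,N)/g(\thet_1)$ is only known to be of order $1/((2-\alpha)\alpha^{3/2})$, not small, so you cannot conclude $s_1\asymp 1/N$. Your fallback, $s_1,\thet_1\in[0,\pi/(N+1)]$ hence $|E_2(1,N)|\le\pi$, is the right one and is essentially the paper's direct bound.
\end{itemize}

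The part that is not yet a proof is $\eta=\eta_{\mathrm{BG}}$. The direct route, transforming Eq.~(5) of \cite{bg2022} into the $\arccot$ integral, is only announced, and even the substitution is left to guesswork. It would have the merit of not needing the asymptotic input of \cite{bg2022}, but as written it gives nothing.

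Your ``alternative'' is the paper's argument, but it has to be made concrete, and it needs neither analyticity nor the values at $\pi$. Fix $\alpha\in(1,2)$ and $\thet\in(0,\pi)$, and take $n_N=\lfloor (N+1)\thet/\pi\rfloor$.
\begin{itemize}
\item For large $N$ you have $1\le n_N\le N$ and $n_N\ge\eps N$, and $s_{n_N}\to\thet$ by Theorem~\ref{thm:main:2}.
\item The first part of the corollary and Theorem~1(ii) of \cite{bg2022} then both apply to the same numbers $s_{n_N}$.
\item Subtracting the two phase equations gives $\eta(s_{n_N})-\eta_{\mathrm{BG}}(s_{n_N})=E_2(n_N,N)-E_6(N)$.
\item This tends to $0$, because $E_2(n_N,N)=O(n_N^{-\alpha})$ and $E_6(N)=o(N^{1-\gamma})$ with $\gamma>1$.
\end{itemize}
Continuity of both functions gives $\eta(\thet)=\eta_{\mathrm{BG}}(\thet)$ for every $\thet\in(0,\pi)$, and hence on $[0,\pi]$.
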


Our main results also yield a two-term approximation for the eigenvalues $\lambda_n$, valid for all $\alpha\in(0,2)$ and all $1\leq n\leq N$. In the symmetric Fisher--Hartwig case, this extends the parameter and index ranges covered by Theorem~3 of \cite{bg2022}.

\begin{theorem}
\label{thm:main:4}
    Let $U_{n,N} = n\pi/(N+1)$. Under the assumptions of Theorem \ref{thm:main}, for $1 \leq n\leq N$,
    \formula{
    \lambda_n = g(U_{n,N}) - \frac{g'(U_{n,N}) \eta(U_{n,N})}{N+1} + E_1(n,N) + E_3(n,N),
    }
    where $E_1(n,N)$ is defined in \eqref{eq:g} and
    \formula{
    E_3(n,N) = \mathscr{O}\biggl( \frac{\alpha n^{\alpha-2}}{N^\alpha} \biggr).
    }
    In particular, if $\alpha \in (1,2)$ and $n \geq \eps N$ for some $\eps \in (0,1)$, then the remainder becomes $\mathscr{O}(N^{-2})$.
\end{theorem}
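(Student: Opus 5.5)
We derive Theorem~\ref{thm:main:4} from Theorem~\ref{thm:main} by Taylor expansion. Since $\lambda_n = g(\thet_n) + E_1(n,N)$, it is enough to show
\formula{
g(\thet_n) = g(U_{n,N}) - \frac{g'(U_{n,N})\,\eta(U_{n,N})}{N+1} + \mathscr{O}\biggl(\frac{\alpha n^{\alpha-2}}{N^\alpha}\biggr).
}
The phase equation \eqref{eq:arg:main} gives $\thet_n = U_{n,N} - \eta(\thet_n)/(N+1)$. We use elementary properties of $\eta$, read off from its integral formula. First, $\eta$ is bounded uniformly in $\thet\in[0,\pi]$ and $\alpha\in(0,2)$, since the $\arccot$ factor lies in $(0,\pi)$ and $\int_0^\infty t(1+t^2)^{-1}(t^2+s^2)^{-1/2}\,dt$ grows at most logarithmically as $s\to0$. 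Second, $\eta$ has a controlled derivative. We write $\delta_n = \thet_n - U_{n,N} = -\eta(\thet_n)/(N+1)$, so $|\delta_n| \lesssim 1/N$. By Theorem~\ref{thm:main:2}, $\thet_n$ is comparable to $U_{n,N}$ (both are $\asymp n/N$); this is needed so the Taylor remainder is evaluated at a point comparable to $n/N$.

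The first step is to expand $g$ to second order:
\formula{
g(\thet_n) = g(U_{n,N}) + g'(U_{n,N})\delta_n + \tfrac12 g''(\xi)\delta_n^2
}
with $\xi$ between $\thet_n$ and $U_{n,N}$. Since $g''(\xi) = \mathscr{O}(\alpha\,\xi^{\alpha-2})$ on $(0,\pi)$ (the factor $\alpha$ is explicit: $g'' = \alpha(\alpha-1)(\dots)+\alpha(\dots)$) and $\xi\asymp n/N$, the quadratic term is $\mathscr{O}(\alpha (n/N)^{\alpha-2}N^{-2}) = \mathscr{O}(\alpha n^{\alpha-2}N^{-\alpha})$, as claimed.

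The second step replaces $\eta(\thet_n)$ by $\eta(U_{n,N})$ in the linear term. The error is
\formula{
|g'(U_{n,N})|\,\frac{|\eta(\thet_n)-\eta(U_{n,N})|}{N+1} \lesssim \alpha\Bigl(\frac nN\Bigr)^{\alpha-1}\frac{1}{N}\,\sup|\eta'|\,|\delta_n|.
}
For this to be $\mathscr{O}(\alpha n^{\alpha-2}N^{-\alpha})$ we need $|\eta'(\thet)|\lesssim 1/\thet$ near $0$ (and bounded elsewhere). Then, with $\thet\asymp n/N$ and $|\delta_n|\lesssim 1/N$, the error is $\alpha (n/N)^{\alpha-1}N^{-1}\cdot(N/n)\cdot N^{-1} = \alpha n^{\alpha-2}N^{-\alpha}$. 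This derivative estimate is the main obstacle. We would differentiate under the integral in $\thet$: the factor $\cos(\thet/2)/\sqrt{t^2+\sin^2(\thet/2)}$ has $\thet$-derivative bounded by $C\,\sin(\thet/2)\,(t^2+\sin^2(\thet/2))^{-3/2}$ plus a bounded term. Integrating against $t/(1+t^2)$ then gives $\lesssim \int_0^\infty s\,t\,(t^2+s^2)^{-3/2}\,dt = 1$ with $s=\sin(\thet/2)$, plus a bounded piece. This actually yields $|\eta'|$ bounded, which is better than needed. Near $\thet=\pi$ there is no singularity.

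Finally, when $\alpha\in(1,2)$ and $n\geq\eps N$, we have $n^{\alpha-2}N^{-\alpha}\asymp N^{-2}$. In the same regime $E_1 = \mathscr{O}(N^{-1-\alpha}) = \mathscr{O}(N^{-2})$, giving the stated $\mathscr{O}(N^{-2})$ remainder.
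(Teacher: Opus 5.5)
Your proposal is correct and follows essentially the paper's route. It uses a second-order expansion of $g$ about $U_{n,N}$ (the paper applies the mean value theorem to $g$ and then to $g'$), together with the mean value theorem for $\eta$ and a uniform bound on $\eta'$; the paper takes that bound from Proposition~\ref{lem:theta:plus} as $|\eta'|\leq(2-\alpha)/2$, and your differentiation under the integral reproduces it up to the constant. Two small slips: the comparability $\thet_n\asymp U_{n,N}$ comes from the bounds $(2n-1)\pi/(2(N+1))\leq\thet_n\leq U_{n,N}$ of Lemma~\ref{lem:theta:bounds}, not from Theorem~\ref{thm:main:2}, and these bounds need $0\leq\eta\leq(2-\alpha)\pi/4<\pi/2$ (mere boundedness of $\eta$ does not handle small $n$); also, the integral you use to bound $\eta$ is bounded by $\pi/2$ rather than growing logarithmically.
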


Lastly, using the asymptotic expansion of the function $\eta$ that appears in Theorem \ref{thm:main}, we establish the approximation formulas for extreme eigenvalues at the edges of the spectrum. Below $C$ denotes a positive constant, independent of $\alpha, n$ and $N$.

\begin{theorem}
\label{thm:main:3}
Under the assumptions of Theorem \ref{thm:main},
\begin{enumerate}
    \item for $n \leq N/2$,
    \formula{
    \lambda_n =  g\biggl(\frac{(4n -  2+\alpha)\pi}{4N+2\alpha}\biggr) + E_1(n,N) - E_4(n,N),
    }
    where $E_1(n,N)$ is defined in \eqref{eq:g} and
    \formula{
    0 < E_4(n,N) \leq C\alpha \begin{cases}
        \dfrac{1}{1-\alpha} \, \dfrac{n^{2\alpha}}{N^{2\alpha+1}}\biggl(1-\dfrac{n}{N+1}\biggr) &\text{for } \alpha \in (0, 1), \\
        \dfrac{ n^2}{N^3} \log \dfrac{N}{n} \biggl( 1- \dfrac{n}{N+1}\biggr)  &\text{for } \alpha = 1, \\
        \dfrac{1}{\alpha-1} \, \dfrac{n^{\alpha+1}}{N^{\alpha+2}} \biggl( 1- \dfrac{n}{N+1}\biggr) &\text{for } \alpha \in (1,2); \\
    \end{cases}
    }
    \item for $n > N/2$,
    \formula{
    \lambda_n = g\biggl( \frac{(n-C_\alpha)\pi}{N+1-C_\alpha}\biggr) + E_1(n,N) - E_5(n,N),
    }
    where
    \formula{
     C_\alpha &= \frac{1}{\pi} \int_0^\infty \frac{t}{(1+t^2)^{3/2}} \arccot \frac{t^\alpha - \cos (\alpha\pi/2)}{\sin(\alpha\pi/2)}\,dt,
    }
    $E_1(n,N)$ is defined in \eqref{eq:g}, and 
    \formula{
    0 < E_5(n,N) &\leq C\alpha \frac{n^{\alpha-1}}{N^\alpha} \biggl( 1-\frac{n}{N+1}\biggr)^4.
    }
\end{enumerate}
\end{theorem}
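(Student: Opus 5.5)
The plan is to derive Theorem~\ref{thm:main:3} directly from Theorem~\ref{thm:main}. Theorem~\ref{thm:main} already gives $\lambda_n = g(\thet_n) + E_1(n,N)$, where $\thet_n$ solves the phase equation \eqref{eq:arg:main}. What remains is to show that $g(\thet_n)$ equals $g$ evaluated at the stated explicit point, minus a positive error $E_4$ or $E_5$. I would do this by expanding $\eta$ near $\thet=0$ and near $\thet=\pi$, solving the phase equation approximately, and transferring the phase error to $g$ with the mean value theorem. Monotonicity of $g$ on $[0,\pi]$ gives the sign of the error. Positivity $E_4,E_5>0$ will come from a sign property of the remainder in the expansion of $\eta$ together with $g'>0$.

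\textbf{Step 1: expansions of $\eta$.}
\begin{itemize}
\item At $\thet=0$, the integral defining $\eta$ reduces to $\frac{2}{\pi}\int_0^\infty \frac{1}{1+t^2}\arccot\frac{t^\alpha-\cos(\alpha\pi/2)}{\sin(\alpha\pi/2)}\,dt$. I expect this to equal $(2-\alpha)\pi/4$. This is consistent with the target point: the equation $(N+1)\thet+(2-\alpha)\pi/4=n\pi$ has to reproduce $\thet=(4n-2+\alpha)\pi/(4N+2\alpha)$ to leading order, so I would choose the normalisation to match.
\item The expression $(4n-2+\alpha)\pi/(4N+2\alpha)$ looks like the exact solution of a linearised equation $(N+1)\thet + \eta(0) + \eta'(0)\thet = n\pi$ with $\eta'(0)=\alpha/2-1$. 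Indeed, solving gives $\thet = (n\pi-\eta(0))/(N+\alpha/2)$. So I would prove $\eta(\thet)=\eta(0)+(\alpha/2-1)\thet + R_0(\thet)$, with $R_0$ of one sign and $|R_0(\thet)|\lesssim \thet^{2\alpha}/(1-\alpha)$, $\thet^2\log(1/\thet)$, or $\thet^{1+\alpha}/(\alpha-1)$ in the three regimes.
\item The linear coefficient comes from the factor $\cos(\thet/2)/\sqrt{t^2+\sin^2(\thet/2)}$. The singular behaviour comes from the small-$t$ region, where $\arccot(\cdots)$ behaves like $\frac{\alpha\pi}{2}$ minus a correction of order $t^\alpha$. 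The competition between $t^\alpha$ and $t^2$ is what produces the trichotomy in $\alpha$.
\item At $\thet=\pi$, the factor $\cos(\thet/2)$ vanishes. Writing $\thet=\pi-\delta$, I would get $\eta(\pi-\delta)=C_\alpha\,\delta + R_\pi(\delta)$, with $R_\pi=\mathscr O(\delta^3)$ and of definite sign. The constant $2\cdot\frac{1}{\pi}\cdot\frac12\int \frac{t}{(1+t^2)^{3/2}}\arccot(\cdots)\,dt$ matches the definition of $C_\alpha$.
\end{itemize}

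\textbf{Step 2: comparing roots.} Let $\tilde\thet_n$ denote the explicit point, which solves the linearised equation exactly. Subtracting the two phase equations gives
\[
(N+1+\eta'(\xi))(\thet_n-\tilde\thet_n) = -R(\thet_n).
\]
Since $\eta'$ is bounded with $1+\eta'>0$ (this also underlies uniqueness in Theorem~\ref{thm:main}), this yields $|\thet_n-\tilde\thet_n|\lesssim |R(\thet_n)|/N$, with sign fixed by the sign of $R$.

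To evaluate the error I would use $\thet_n\asymp n/N$ for $n\le N/2$, and $\pi-\thet_n\asymp (N+1-n)/N$ for $n>N/2$, both available from Theorem~\ref{thm:main:2}. Then apply $g(\thet_n)-g(\tilde\thet_n)=g'(\zeta)(\thet_n-\tilde\thet_n)$.
\begin{itemize}
\item For $n\le N/2$, using $g'(\zeta)\asymp \alpha\zeta^{\alpha-1}$, the error is
\[
\alpha (n/N)^{\alpha-1}\cdot (n/N)^{2\alpha}/N,
\]
suitably adjusted in each regime.
\item For $n>N/2$, using $g'(\zeta)\asymp\alpha(\pi-\zeta)$, the error is $\alpha\,\delta\cdot\delta^3/N$ with $\delta\asymp 1-n/(N+1)$. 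This gives the factor $(1-n/(N+1))^4$, and $n^{\alpha-1}/N^\alpha\asymp 1/N$ in this range. The factor $(1-n/(N+1))$ in $E_4$ is harmless for $n\le N/2$.
\end{itemize}

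\textbf{Main obstacle.} The hard part will be Step 1 near $\thet=0$: a sharp expansion of $\eta$, uniform in $\alpha$, with the correct $\alpha$-dependent constants and a definite sign for the remainder.

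I would first substitute $t=s\sin(\thet/2)$. Then I would split the integrand into two pieces:
\begin{itemize}
\item $\arccot(\cdots)$ replaced by its value at $t=0$, which can be integrated in closed form;
\item the difference, which is bounded by $\min(t^\alpha,1)$ times the explicit kernel.
\end{itemize}
The second piece needs to be monotone in $\thet$. If the sign of $R_0$ turns out to be hard to obtain directly, I would instead show that $\eta$ is convex or concave on the relevant interval by differentiating under the integral sign.
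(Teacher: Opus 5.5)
Your route is the paper's: expand $\eta$ at both ends (Propositions~\ref{prop:asymptotics:small:mu} and~\ref{prop:eta:big}), solve the linearised phase equation $(N+\alpha/2)\thet=n\pi-(2-\alpha)\pi/4$ exactly, and pass the phase error to $g$ by the mean value theorem with $g'>0$. Your treatment of $n>N/2$ is fine.

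\textbf{The gap is in Step~1 at $\thet=0$.} You claim $|R_0(\thet)|\leq C\thet^{2\alpha}/(1-\alpha)$ for $\alpha<1$ and $|R_0(\thet)|\leq C\thet^{1+\alpha}/(\alpha-1)$ for $\alpha>1$. These orders look read off from $E_4\approx R_0(\thet_n)/N$ with the factor $g'(\zeta)$ dropped. In Step~2, however, you do multiply by $g'(\zeta)\approx\alpha(n/N)^{\alpha-1}$, so the two steps do not fit together.
\begin{itemize}
\item For $\alpha\in(1,2)$ the claim is false. By \eqref{eq:E0}, $R_0(\thet)=\frac{2}{\pi}\int_0^\infty f_\thet(t)\,(p_\alpha(0)-p_\alpha(t))\,dt$ with $f_\thet(t)=\frac{1}{1+t^2}\bigl(1-\frac{t\cos(\thet/2)}{\sqrt{t^2+r^2}}\bigr)$ and $r=\sin(\thet/2)$. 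On $[r,1]$ one has $f_\thet(t)\geq c\,r^2/t^2$ and $p_\alpha(0)-p_\alpha(t)\geq c_\alpha t^\alpha$. Hence $R_0(\thet)\geq c_\alpha r^2\int_r^1 t^{\alpha-2}\,dt\geq c_\alpha\thet^2$, which is not $\mathscr{O}(\thet^{1+\alpha})$.
\item For $\alpha\in(0,1)$ the claim is true but too weak. Combined with $g'(\zeta)$ it gives $\alpha n^{3\alpha-1}/N^{3\alpha}$, which exceeds the stated $\alpha n^{2\alpha}/N^{2\alpha+1}$ by the factor $(N/n)^{1-\alpha}$.
\end{itemize}
``Suitably adjusted in each regime'' hides exactly this mismatch.

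\textbf{The fix.} The correct orders are those of Proposition~\ref{prop:asymptotics:small:mu}: $C\thet^{\alpha+1}/(1-\alpha)$, $C\thet^2\log(e/\thet)$ and $C\thet^2/(\alpha-1)$. Multiply these by $g'(\zeta)/(N+\alpha/2)\leq C\alpha(n/N)^{\alpha-1}(1-\frac{n}{N+1})/N$ and use $\thet_n\approx n/N$ from Lemma~\ref{lem:theta:bounds}; this gives exactly the three cases of $E_4$.

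The splitting in your last paragraph yields these orders if carried through. It needs two inputs: the exact identity $\int_0^\infty f_\thet(t)\,dt=\thet/2$, and $p_\alpha(0)=(2-\alpha)\pi/2$. You wrote $\alpha\pi/2$, which would give the wrong slope $-\alpha/2$. Then $f_\thet\leq\min\{1,r^2/t^2\}$ and $p_\alpha(0)-p_\alpha(t)\leq C\min\{1,t^\alpha\}$, with the integral split at $t=r$ and $t=1$, produce the bounds. The same representation gives $R_0>0$ immediately, so no monotonicity in $\thet$ is needed.

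Your convexity fallback for the signs is also valid. For $E_\pi>0$ it is shorter than the paper's single-sign-change argument. Substituting $t=s\sin(\thet/2)$ in the paper's formula for $\eta'$ gives $-\eta'(\thet)=\frac{1}{\pi}\int_0^\infty\frac{s}{(1+s^2)^{3/2}}\,p_\alpha\bigl(s\sin(\thet/2)\bigr)\,ds$. This strictly decreases from $(2-\alpha)/2$ to $C_\alpha$. So $\eta$ is strictly convex on $(0,\pi)$ and lies above its tangent lines at both endpoints.
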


\subsection{Organisation}
The paper is organised as follows. In Section \ref{sec:preliminaries}, we discuss properties of symmetric Fisher-Hartwig symbols and recall some properties of self-adjoint operators on $\ell^2(\Z)$. In Section \ref{sec:laplacians} we define the discrete Laplacian and its Dirichlet and Neumann versions. We then use these operators to prove Theorem~\ref{thm:main:2}. Section \ref{sec:eigenvectors} is devoted to discussing eigenvector expansion of Toeplitz matrices corresponding to symmetric Fisher--Hartwig symbols. We also prove some auxiliary results. Construction of the approximate eigenvectors is discussed in Section~\ref{sec:approximate:eigenvectors}. There we also introduce and study the phase equation \eqref{eq:arg:main}. Section~\ref{sec:proofs} contains the proofs of Theorems \ref{thm:main}, \ref{thm:main:4} and \ref{thm:main:3} and Corollary~\ref{cor:equation}. Finally, in Section \ref{sec:simulations}, we present the results of numerical analysis, including comparison of the estimates for the eigenvalues.

\section{Preliminaries}
\label{sec:preliminaries}

\subsection{Definitions and notations}

In this paper, $\Z = \{\ldots, -2,-1,0,1,2,\ldots\}$ denotes the set of integers. The letters $j,k,l,n,m$ and $N$ are reserved to represent integers. By a sequence we mean either a one-sided sequence $(a_k: k \geq 1)$ or a two-sided sequence $(a_k: k \in \Z)$. If the meaning is clear from the context, we write simply $(a_k)$. For a sequence $(a_k)$ we define its difference sequence $\Delta a_k = a_{k+1}-a_k$, where $\Delta$ is a forward difference operator. The $m$th iterated difference of $(a_k)$ is defined inductively by $\Delta^m a_k = \Delta\Delta^{m-1} a_k$, with $\Delta^0 a_k=a_k$, or directly $\Delta^m a_k = \sum_{j=0}^m \binom{m}{j} (-1)^{m-j} a_{k+j}$.

We say that $(a_k:k \geq 1)$ is completely monotone if $(-1)^m \Delta^m a_k\geq 0$ for every $m \geq 0$ and $k \geq 1$. Furthermore, if $(a_{-k}: k \geq 1)$ is completely monotone, then $(a_k: k \leq -1)$ is said to be absolutely monotone. For a sequence $(a_k: k \in \Z)$, if both $(a_k: k \geq 1)$ and $(a_{-k}: k \geq 1)$ are completely monotone, then we say that $(a_k)$ is an \textit{$\amcm$ (absolutely monotone then completely monotone) sequence}. If $a_k = a_{-k}$ for every $k \geq 1$ we say that $(a_k)$ is symmetric. The notion of $\amcm$ sequences originated in \cite{kw2025} and was later used in \cite{kw2026} to generate a class of non-Hermitian Toeplitz matrices. For further discussion on the $\amcm$ class, see Section 3 in \cite{kw2026}.

Following the standard notation, by $\ell^2(\Z)$ we denote the Hilbert space of two-sided real-valued square-summable sequences, that is $\smash{\sum_{k \in \Z} a_k^2 < \infty}$. For $(a_k), (b_k) \in \ell^2(\Z)$ we define the norm of $(a_k)$ by $\smash{\lV a \rV = (\sum_{k \in \Z} a_k^2)^{1/2}}$ and the corresponding inner product for real-valued sequences is given by $\smash{\langle a, b \rangle = \sum_{k \in \Z} a_k b_k}$.

Unless explicitly defined otherwise, $C$ denotes a positive constant. The value of $C$ may change from line to line. We use standard big O notation: we write $f(n,N) = \mathscr{O}(g(n,N))$ if there is a constant $C>0$ such that $|f(n,N)| \leq C g(n,N)$ for all admissible $n,N$. Moreover, we write $f(n,N) \approx g(n,N)$ if there are constants $C_1, C_2>0$ such that $C_1 g(n,N) \leq f(n,N) \leq C_2g(n,N)$.

\subsection{Fisher--Hartwig symbol}

For a two-sided sequence $(a_k: k \in \Z)$, we let $T(a) = (a_{k-l}: k,l \geq 0)$ be a Toeplitz operator (an infinite Toeplitz matrix) associated with $(a_k)$. By $T_N(a)$ we denote the truncation of $T(a)$ to the $N\times N$ matrix with rows and columns indexed by $\{0,1,2,\ldots,N-1\}$. In this paper, we are interested in Hermitian Toeplitz matrices with Fisher--Hartwig symbol $\hat a$ defined in \eqref{eq:fh:symbol}, where
\formula{
a_k = \frac{1}{2\pi} \int_0^{2\pi} \hat a(e^{i\thet}) e^{ik\thet} \, d\thet
}
for $k \in \Z$. From now on, by $(a_k)$ we denote the sequence with the generating function $\hat a$. If it is clear from the context, we will write $T$ and $T_N$ instead of $T(a)$ and $T_N(a)$, respectively.

The sequence $(a_k)$ is symmetric. Moreover, for $k \geq 0$ we have
\formula[eq:a]{
a_k = \frac{(-1)^{k} \Gamma(1+\alpha)}{\Gamma(1+k+\alpha/2)\Gamma(1-k+\alpha/2)},
}
where $\Gamma(x)$ is the gamma function (see Lemma 3.2 in \cite{bg2022}). It is easy to verify that $(-a_k: k \geq 1)$ and $(-a_{-k}: k \geq 1)$ are completely monotone sequences, and hence $(-a_k)$ is an $\amcm$ sequence. Since $\hat a(1)=0$, we also have $a_0 =-\sum_{k \neq 0}a_k$.

\begin{lemma}
\label{lem:ak:bounds}
    For $(a_k)$ defined in \eqref{eq:a} and $k \geq 1$, we have
    \formula{
    &-\!a_k \leq \frac{C\alpha}{k^{\alpha+1}},&& \Delta a_k \leq \frac{C\alpha}{k^{\alpha+2}}, & -\Delta^2 a_k \leq \frac{C\alpha}{k^{\alpha+3}}. &
    }
\end{lemma}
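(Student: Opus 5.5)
Since every bound carries a factor $\alpha$, I will first rewrite \eqref{eq:a} in a form that exhibits this factor. The reflection formula $\Gamma(1-k+\alpha/2)\Gamma(k-\alpha/2) = \pi/\sin(\pi(k-\alpha/2))$ together with $\sin(\pi(k-\alpha/2)) = (-1)^{k+1}\sin(\alpha\pi/2)$ gives, for $k\geq 1$,
\formula{
-a_k = \frac{\Gamma(1+\alpha)\sin(\alpha\pi/2)}{\pi}\,\frac{\Gamma(k-\alpha/2)}{\Gamma(k+1+\alpha/2)}.
}
On $(0,2)$ we have $\Gamma(1+\alpha)\leq 2$ and $\sin(\alpha\pi/2)\leq \alpha\pi/2$. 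Hence $-a_k \leq \alpha\, b_k$ with $b_k = \Gamma(k-\alpha/2)/\Gamma(k+1+\alpha/2)$, and it suffices to prove the three estimates for $b_k$ with a constant $C$ that is uniform in $\alpha\in(0,2)$.

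The key step is an integral representation of $b_k$. The Beta integral gives
\formula{
b_k = \frac{1}{\Gamma(1+\alpha)}\int_0^1 s^{k-\alpha/2-1}(1-s)^{\alpha}\,ds.
}
This is valid for all $k\geq1$ because $k-\alpha/2>0$. It also shows that $(b_k)$ is completely monotone. The differences then become explicit:
\formula{
-\Delta^m b_k = \ldots, \qquad (-1)^m\Delta^m b_k = \frac{1}{\Gamma(1+\alpha)}\int_0^1 s^{k-\alpha/2-1}(1-s)^{\alpha+m}\,ds = \frac{\Gamma(k-\alpha/2)\,\Gamma(1+\alpha+m)}{\Gamma(1+\alpha)\,\Gamma(k+1+\alpha/2+m)}.
}
Since $\Delta a_k = -\Delta(-a_k)$ and $-a_k$ is a positive multiple of $b_k$, we get $\Delta a_k = c_\alpha(-\Delta b_k)$ with $c_\alpha = \Gamma(1+\alpha)\sin(\alpha\pi/2)/\pi$. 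Similarly, $-\Delta^2 a_k = c_\alpha \Delta^2 b_k$. So for $m=0,1,2$ the three quantities equal $c_\alpha(-1)^m\Delta^m b_k \geq 0$, and all are bounded by $\alpha$ times
\formula{
\frac{\Gamma(1+\alpha+m)}{\Gamma(1+\alpha)}\,\frac{\Gamma(k-\alpha/2)}{\Gamma(k+1+\alpha/2+m)}.
}
The first factor is at most $(3+m)^m$, which is uniformly bounded.

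The remaining step, and the main obstacle, is the uniform ratio bound $\Gamma(k-\alpha/2)/\Gamma(k+1+\alpha/2+m)\leq C k^{-(\alpha+1+m)}$ for $k\geq1$ and $\alpha\in(0,2)$. The difficulty is at $k=1$ with $\alpha\to2$, where $\Gamma(1-\alpha/2)$ blows up. For $k\geq2$ the argument $k-\alpha/2\geq1$ keeps us away from the pole, and Gautschi/Wendel-type inequalities, or log-convexity of $\Gamma$, give $\Gamma(x)/\Gamma(x+s)\leq C x^{-s}$ with $x=k-\alpha/2\geq k/2$ and $s=\alpha+1+m\leq 5$. For $k=1$ the prefactor rescues us: $\sin(\alpha\pi/2)\,\Gamma(1-\alpha/2) = \pi/\Gamma(\alpha/2)$, which is bounded, and it is also $\leq C\alpha$ because $1/\Gamma(\alpha/2)\approx \alpha/2$ near $0$. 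So in the case $k=1$ I will estimate $c_\alpha\Gamma(1-\alpha/2)$ directly by $C\alpha$, rather than bounding $\sin(\alpha\pi/2)$ and $\Gamma(1-\alpha/2)$ separately. This closes all three bounds, since $k^{-(\alpha+1+m)}=1$ at $k=1$.
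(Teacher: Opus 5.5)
Your proof is correct and is essentially the paper's. Both use the reflection formula to extract $\sin(\alpha\pi/2)$, a gamma-ratio bound for $k\geq2$ via $k-\alpha/2\geq k/2$, and a separate $k=1$ case through $\sin(\alpha\pi/2)\Gamma(1-\alpha/2)=\pi/\Gamma(\alpha/2)$; that $k=1$ step also repairs your initial claim that a bound on $b_k$ uniform in $\alpha$ suffices, which fails because $b_1\to\infty$ as $\alpha\to2$. The only difference is that you obtain the differences from the Beta integral, whereas the paper uses the recurrence $a_{k+1}/a_k=(k-\alpha/2)/(k+1+\alpha/2)$; both give the same factor $\frac{(1+\alpha)\cdots(m+\alpha)}{(k+1+\alpha/2)\cdots(k+m+\alpha/2)}$ multiplying $-a_k$, and your route additionally proves the complete monotonicity of $(-a_k)$, which the paper only asserts.
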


\begin{proof}
Observe that
\formula{
\frac{1}{\Gamma(1-k+\alpha/2)} = \frac{ \Gamma(k-\alpha/2) \sin((k-\alpha/2)\pi)}{\pi} = \frac{(-1)^{k+1}\Gamma(k-\alpha/2) \sin (\alpha \pi/2)}{\pi},
}
Hence, \eqref{eq:a} can be simplified as follows:
\formula{
-a_k = \frac{\Gamma(1+\alpha) \sin (\alpha\pi/2)}{\pi} \, \frac{\Gamma(k-\alpha/2)}{\Gamma(1+k+\alpha/2)}. 
}
Recall the asymptotics of the ratios of the gamma function, that is $\Gamma(z)/\Gamma(z+\gamma) \leq Cz^{-\gamma}$ for $z\geq z_0>0$ and $\gamma>0$ (see 6.1.47 in \cite{abramowitz}). For $k\geq2$, we have $k-\alpha/2\geq k/2$, and the gamma-ratio estimate gives
\formula{
-a_k \leq \frac{C\alpha}{(k-\tfrac{\alpha}{2})^{1+\alpha}} \leq \frac{C\alpha}{k^{1+\alpha}}.
}For $k=1$, we use \eqref{eq:a}:
\formula{
-a_1=\frac{\Gamma(1+\alpha)}{\Gamma(2+\alpha/2)\Gamma(\alpha/2)} = \frac{\alpha}{2}\frac{\Gamma(1+\alpha)}{(1+\alpha/2)\Gamma(1+\alpha/2)^2}\leq C\alpha.
}
Finally, the recurrence relation for the gamma function yields
\formula{
\frac{a_{k+1}}{a_k} = \frac{k-\alpha/2}{k+1+\alpha/2}.
}
Hence, for $k \geq 1$,
\formula{
\Delta a_k= -\biggl(1 -\frac{a_{k+1}}{a_k} \biggr)a_k = -\frac{1+\alpha}{k+1+\alpha/2}\,a_k\leq -\frac{C}{k}a_k\leq\frac{C\alpha}{k^{\alpha+2}},
}
and
\formula{
-\Delta^2a_k = -\biggl(\frac{a_{k+2}}{a_k}-2\frac{a_{k+1}}{a_k}+1\biggr)a_k = -\frac{(\alpha+1)(\alpha+2)}{(k+1+\alpha/2)(k+2+\alpha/2)}a_k \leq \frac{C\alpha}{k^{\alpha+3}},
}
as desired.
\end{proof}

\subsection{Wiener--Hopf factorisation and complete Bernstein functions}

A \textit{Wiener--Hopf factorisation} of a function $f$ on $\torus = \{z \in \C: |z|=1\}$ is a representation
\formula{
f(z)=f^+(z)f^-(z^{-1}),
}
where $f^+$ and $f^-$ are holomorphic and non-vanishing in the unit disk $\disk = \{z \in \C: |z|<1\}$, with boundary values for which the identity holds. The exterior factor is $z\mapsto f^-(z^{-1})$. If both factors extend continuously and without zeros to $\smash{\overline{\disk}}$, they are unique up to the replacement $(f^+,f^-)\mapsto(cf^+,c^{-1}f^-)$ for $c \neq 0$. For the difference quotients considered in Section \ref{sec:wk}, which vanish at $z=1$, we use the specific construction in \cite{kw2026}.

Wiener--Hopf factorisation is a standard tool in the study of Toeplitz matrices. For details, see e.g. Chapter 1.5 in \cite{bs}.

A function $f$ is called a \textit{complete Bernstein function} if it is holomorphic on $\C \setminus (-\infty,0]$, non-negative on $(0,\infty)$, and maps the upper complex half-plane into its closure. Every complete Bernstein function has an integral representation
\formula[eq:cbf]{
f(z) = c_1z + c_0 + \frac{1}{\pi} \int_{(0,\infty)} \frac{z}{z+s} \, \nu(ds),
}
where $c_0,c_1\geq 0$ and $\nu$ is a non-negative measure on $(0,\infty)$ such that $\min\{1,s^{-1}\}$ is integrable with respect to $\nu$. By \eqref{eq:cbf}, every complete Bernstein function is increasing on $(0,\infty)$. For further discussion, we refer to Chapters 6--7 in \cite{ssv}.

\subsection{Properties of Hermitian operators} Below we collect some classical results concerning Hermitian operators. Even though these results hold for a general Hilbert space setup, we will formulate the statements for $\ell^2(\Z)$ considered in this paper. Throughout this subsection, operators on $\ell^2(\Z)$ are assumed to be bounded and self-adjoint.

Let $A,B$ be Hermitian operators on $\ell^2(\Z)$. We write $A \leq B$ if for any $v \in \ell^2(\Z)$ we have $\langle Av, v \rangle \leq \langle Bv, v \rangle$. A continuous function $f:[0,\infty)\to\R$ is called \textit{operator monotone} if $f(A)\leq f(B)$ whenever $A$ and $B$ are bounded self-adjoint operators satisfying $0\leq A\leq B$. Here $f(A)$ and $f(B)$ are defined by means of the spectral theorem. Similarly, we say that a continuous function $f:[0,\infty) \to \R$ is \textit{operator concave} if for all $\lambda \in [0,1]$ and non-negative Hermitian operators $A,B$ on $\ell^2(\Z)$ we have $(1-\lambda)f(A) + \lambda f(B) \leq f((1-\lambda)A+\lambda B)$.

\begin{lemma}
\label{lem:operator:concave}
    If $\alpha \in (0,2)$, then $f(x) = x^{\alpha/2}$ is both operator monotone and operator concave. 
\end{lemma}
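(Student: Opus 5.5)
This is the Löwner--Heinz inequality together with its concave counterpart, and I would prove both from one integral representation. The idea is to write $x^{\alpha/2}$ as an average of functions that are already known to be operator monotone and operator concave.

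For $\gamma = \alpha/2 \in (0,1)$, the standard Stieltjes-type formula reads
\formula{
x^{\gamma} = \frac{\sin(\gamma\pi)}{\pi} \int_0^\infty \frac{x}{x+s}\, s^{\gamma-1}\,ds, \qquad x \geq 0.
}
This exhibits $x^\gamma$ as a complete Bernstein function in the form \eqref{eq:cbf}, with $c_0=c_1=0$ and $\nu(ds) = \sin(\gamma\pi)\, s^{\gamma-1}\,ds$. By the spectral theorem the formula holds with $x$ replaced by any bounded non-negative Hermitian operator $A$. The integral converges in norm, since for $\lV A\rV \leq M$ the integrand is bounded by $\min\{M/s,\,1\}\, s^{\gamma-1}$, which is integrable. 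So it suffices to prove both properties for each function $h_s(x) = x/(x+s) = 1 - s(x+s)^{-1}$ with $s>0$. Both properties are preserved under integration against the non-negative measure $\nu$, because the order $\leq$ is defined through $\langle \cdot\, v, v\rangle$ and positivity is preserved by weak integrals.

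\emph{Monotonicity.} If $0 \leq A \leq B$, then $A+s \leq B+s$, and both operators are invertible and bounded below by $s$. I would then show $(B+s)^{-1} \leq (A+s)^{-1}$. Set $X = (B+s)^{-1/2}(A+s)(B+s)^{-1/2}$. Then $X \leq I$, hence $X^{-1} \geq I$, which is a scalar statement via the spectral theorem. Conjugating back with $(B+s)^{-1/2}$ gives $(A+s)^{-1} \geq (B+s)^{-1}$. Therefore $h_s(A) \leq h_s(B)$.

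\emph{Concavity.} It suffices to show that $x \mapsto (x+s)^{-1}$ is operator convex on non-negative operators; then $h_s = 1 - s(x+s)^{-1}$ is operator concave. Put $P = A+s$ and $Q = B+s$, both invertible and positive. The required inequality is
\formula{
\bigl((1-\lambda)P+\lambda Q\bigr)^{-1} \leq (1-\lambda)P^{-1} + \lambda Q^{-1}.
}
Conjugating by $P^{1/2}$ and setting $R = P^{-1/2} Q P^{-1/2} > 0$, this becomes
\formula{
\bigl((1-\lambda) + \lambda R\bigr)^{-1} \leq (1-\lambda) + \lambda R^{-1}.
}
Since this involves a single positive operator $R$, it reduces via the spectral theorem to the scalar convexity of $t\mapsto 1/t$ on $(0,\infty)$.

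The main obstacle is making the passage from the scalar integral formula to the operator setting rigorous. This means verifying norm (or weak) convergence of the operator-valued integral and checking that the identity $f(A) = \int h_s(A)\,\nu(ds)$ agrees with the functional calculus. Both follow from the spectral theorem and dominated convergence applied to $\langle h_s(A)v,v\rangle$. Alternatively, one could simply cite the Löwner--Heinz theorem and the standard fact that operator monotone functions on $[0,\infty)$ are operator concave (Bhatia, \emph{Matrix Analysis}, Ch.~V), and use the argument above as a self-contained check.
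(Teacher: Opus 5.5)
Your proof is correct, but it takes a different route from the paper. The paper proves the lemma entirely by citation. It uses the Löwner--Heinz theorem (Bhatia, Theorem V.1.9) for operator monotonicity. It then deduces operator concavity from the general fact that a continuous $f:[0,\infty)\to[0,\infty)$ is operator monotone if and only if it is operator concave (Bhatia, Theorem V.2.5).

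You instead get both properties directly from the Stieltjes representation $x^{\gamma}=\frac{\sin(\gamma\pi)}{\pi}\int_0^\infty \frac{x}{x+s}\,s^{\gamma-1}\,ds$. This reduces everything to the functions $h_s(x)=1-s(x+s)^{-1}$, for which the two properties follow from two elementary facts: inversion of positive invertible operators is order-reversing, and it is operator convex. The individual steps check out:
\begin{itemize}
\item the congruence by $(B+s)^{-1/2}$;
\item the reduction of the convexity inequality to the single operator $R=P^{-1/2}QP^{-1/2}$;
\item the passage through the integral via Tonelli applied to $\langle h_s(A)v,v\rangle$, with all integrands non-negative.
\end{itemize}

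The paper's route is shorter. Yours has three advantages:
\begin{itemize}
\item It is self-contained and never needs the comparatively deep Theorem V.2.5. For concavity it suffices that $x^{\alpha/2}$ is a positive superposition of operator concave functions.
\item It works verbatim for bounded operators on $\ell^2(\Z)$, where the lemma is actually applied. Bhatia's results are stated for matrices, so citing them needs a standard approximation step to transfer.
\item It proves more: every complete Bernstein function of the form \eqref{eq:cbf} is operator monotone and operator concave on $[0,\infty)$. This fits the complete Bernstein framework the paper already uses.
\end{itemize}
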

\begin{proof}
    By Theorem V.1.9 in \cite{bhatia}, $f$ is operator monotone. Since $f$ is continuous and maps $[0,\infty)$ into itself, by Theorem V.2.5 in \cite{bhatia}, $f$ is also operator concave.
\end{proof}

For $E \subset \Z$, we define $\ind_E$ to be the projection operator onto the subspace $\{v \in \ell^2(\Z): v_k=0 \text{ for } k \in \Z \setminus E\}$. More precisely, for $v \in \ell^2(\Z)$, we set
\formula{
(\ind_E v)_k = \begin{cases}
    v_k &\text{for } k \in E, \\
    0 &\text{for } k \not\in E.
\end{cases}
}
In the literature, one can also find the operator counterpart of Jensen's inequality. We state a simplified version
below. For the extended version see Theorem V.2.3 in \cite{bhatia}.

\begin{theorem}[Jensen's operator inequality]
\label{thm:jensen}
Let $f:[0,\infty)\to\R$ be a continuous operator concave function satisfying $f(0)\geq0$, and let $E\subset\Z$. If $A$ is a bounded non-negative self-adjoint operator on $\ell^2(\Z)$, then
\formula{
\ind_E f(A)\ind_E\leq f(\ind_E A\ind_E).
}
\end{theorem}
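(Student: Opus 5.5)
The plan is to prove the inequality with the classical ``symmetry'' trick of Hansen--Pedersen, which relies only on operator concavity and on how the functional calculus interacts with unitaries and block-diagonal operators. Write $P=\ind_E$ and $Q=I-P=\ind_{\Z\setminus E}$. These are complementary orthogonal projections. The operator $U=P-Q$ is then self-adjoint and unitary, since $U^2=P+Q=I$. A direct expansion, using $PQ=QP=0$, gives
\begin{equation*}
\tfrac12\bigl(A+UAU\bigr)=PAP+QAQ .
\end{equation*}
Both $A$ and $UAU$ are bounded, non-negative and self-adjoint, so $f$ may be applied to each of them.

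The first step is to apply operator concavity with $\lambda=\tfrac12$, which yields
\begin{equation*}
f(PAP+QAQ)\geq \tfrac12 f(A)+\tfrac12 f(UAU).
\end{equation*}
Next, because $U$ is unitary, the spectral theorem gives $f(UAU)=Uf(A)U$. This can be checked first for polynomials and then passed to continuous $f$ by uniform approximation on the spectrum. The same expansion as above then turns the right-hand side into $Pf(A)P+Qf(A)Q$.

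The second step is to identify the left-hand side. The operator $X=PAP+QAQ$ commutes with $P$ and is block diagonal with respect to $\ell^2(\Z)=\operatorname{ran}P\oplus\operatorname{ran}Q$. Hence $f(X)=f(PAP|_{\operatorname{ran}P})\oplus f(QAQ|_{\operatorname{ran}Q})$, which again is checked for polynomials and extended by approximation. In the same way, $f(PAP)=f(PAP|_{\operatorname{ran}P})\oplus f(0)\,I_{\operatorname{ran}Q}$, because $PAP$ vanishes on $\operatorname{ran}Q$. Compressing both sides of the inequality by $P$ preserves the order, since $\langle PYPv,v\rangle=\langle YPv,Pv\rangle$. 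This gives
\begin{equation*}
Pf(A)P\leq Pf(X)P = f(PAP)-f(0)\,Q .
\end{equation*}
Finally, the hypothesis $f(0)\geq0$ gives $-f(0)Q\leq 0$, and the claimed inequality $\ind_E f(A)\ind_E\leq f(\ind_E A\ind_E)$ follows.

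The only delicate point, and the one I would write out most carefully, is the bookkeeping in the functional calculus for $f(PAP)$ as an operator on the whole space. The term $f(0)Q$ appears there, and this is precisely where the assumption $f(0)\geq0$ is used. The facts that $f$ commutes with unitary conjugation and respects direct sums are routine and follow from the polynomial-approximation argument above.
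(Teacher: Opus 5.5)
Your proof is correct, and it takes a different route from the paper. The paper does not prove this statement: it quotes the projection case of the Hansen--Pedersen characterisation, Theorem V.2.3 in \cite{bhatia}, applied to $-f$ (which is operator convex with $-f(0)\leq 0$). The standard proof of that theorem handles an arbitrary contraction $K$ by averaging two unitary conjugates of $A\oplus 0$ under a $2\times 2$ unitary block dilation on $\mathcal H\oplus\mathcal H$. You specialise this idea to an orthogonal projection, where no dilation is needed. The symmetry $U=P-Q$ on the original space already gives $\tfrac12(A+UAU)=PAP+QAQ$. The hypothesis $f(0)\geq 0$ then enters exactly through the block $f(0)Q$ of $f(PAP)$, just as it enters through $f(A\oplus 0)=f(A)\oplus f(0)I$ in the dilation proof. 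Your route gives a short self-contained argument that uses only midpoint concavity and works for any orthogonal projection. It also applies verbatim to bounded operators on $\ell^2(\Z)$ under the paper's own infinite-dimensional definition of operator concavity. By contrast, Bhatia states the result for matrices, so the citation tacitly relies on the Hilbert-space version or an approximation step. The citation gives more in return: the contraction form $K^*f(A)K\leq f(K^*AK)$ and the converse implication. The paper needs neither, since it uses the inequality only with $f(x)=x^{\alpha/2}$ and $E=E_N$ in the proof of Theorem~\ref{thm:main:2}.
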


For a self-adjoint operator $A$ on $H_N=\{v \in \ell^2(\Z): v_k=0 \text{ for } k \not\in \{0,1,\ldots,N-1\}\}$, let $\lambda_1(A)\leq\cdots\leq\lambda_N(A)$ denote its eigenvalues, counted with multiplicity. The following finite-dimensional minimax principle follows from Corollary~III.1.2 in \cite{bhatia}, after identifying $H_N$ with $\mathbb R^N$ through an orthonormal basis.

\begin{theorem}[The minimax principle]
    Let $A:H_N\to H_N$ be self-adjoint. Then, for $1\leq n\leq N$, 
    \formula{
    \lambda_n(A)=\min\bigl\{\max\{\langle Av,v\rangle:v\in V,\ \|v\|=1\}:V\text{ is an }n\text{-dimensional subspace of }H_N\bigr\}.
    }
\end{theorem}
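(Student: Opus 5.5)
The plan is to reduce everything to the spectral theorem on the finite-dimensional real inner product space $H_N$ and then argue by dimension counting. First I will identify $H_N$ with $\R^N$ via the standard basis $\{\delta_0,\ldots,\delta_{N-1}\}$, which is orthonormal for the $\ell^2(\Z)$ inner product. Since $A$ is self-adjoint, the spectral theorem gives an orthonormal basis $e_1,\ldots,e_N$ of $H_N$ with $Ae_j=\lambda_j(A)e_j$, where the eigenvalues are listed in non-decreasing order with multiplicity. For $v=\sum_j c_je_j$ this yields
\begin{equation*}
\langle Av,v\rangle=\sum_{j=1}^N \lambda_j(A)\,c_j^2, \qquad \lV v\rV^2=\sum_{j=1}^N c_j^2 .
\end{equation*}
All remaining steps are read off from this diagonal form.

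For the inequality ``$\leq$'' I will exhibit one good subspace, $V_0=\operatorname{span}\{e_1,\ldots,e_n\}$. For a unit vector $v\in V_0$ only $c_1,\ldots,c_n$ can be nonzero, so
\begin{equation*}
\langle Av,v\rangle=\sum_{j\leq n}\lambda_j(A)c_j^2\leq \lambda_n(A).
\end{equation*}
Equality holds at $v=e_n$. Hence the inner maximum over $V_0$ equals $\lambda_n(A)$.

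For the inequality ``$\geq$'' I will take an arbitrary $n$-dimensional subspace $V\subset H_N$ and set $W=\operatorname{span}\{e_n,\ldots,e_N\}$, which has dimension $N-n+1$. Because $\dim V+\dim W=N+1>N$, the intersection $V\cap W$ is nontrivial, so it contains a unit vector $v$. This $v$ has $c_j=0$ for $j<n$, and therefore
\begin{equation*}
\langle Av,v\rangle=\sum_{j\geq n}\lambda_j(A)c_j^2\geq\lambda_n(A).
\end{equation*}
So the maximum over $V$ is at least $\lambda_n(A)$.

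Two small points remain, and neither is a real obstacle. First, the inner maximum is attained because the unit sphere of $V$ is compact and $v\mapsto\langle Av,v\rangle$ is continuous. Second, the outer infimum is a minimum because it is attained at $V_0$. The only step requiring any idea is the dimension-count intersection $V\cap W\neq\{0\}$, which is the standard Courant--Fischer argument.
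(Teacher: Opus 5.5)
Your proof is correct and is the standard Courant--Fischer argument. The paper gives no proof of its own; it simply invokes Corollary~III.1.2 of \cite{bhatia} after identifying $H_N$ with $\R^N$, and that corollary rests on the same dimension-counting step you use (Theorem~III.1.1 there). Working directly over $\R$, as you do, also covers the small point that Bhatia states the result for complex Hermitian matrices, whereas $H_N$ here is a real space.
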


Consequently, if $A,B:H_N\to H_N$ are self-adjoint and $A\leq B$, then for $1 \leq n \leq N,$
\formula[eq:lambdas]{
\lambda_n(A) \leq \lambda_n(B).
}

\section{Spectral bounds}

\subsection{Discrete Laplacians}
\label{sec:laplacians}

Let $L$ be the discrete Laplacian on $\ell^2(\Z)$, defined by
\formula{
(Lu)_k = u_{k-1}-2u_k+u_{k+1}
}
for $k \in \Z$. Then $-L$ is a bounded non-negative self-adjoint operator with spectrum $[0,4]$. For $N\geq1$, we set $E_N=\{0,1,\ldots,N-1\}$ and let $H_N$ be the subspace of $\ell^2(\Z)$ consisting of sequences supported on $E_N$. We identify $H_N$ with $\R^N$ and denote the orthogonal projection onto $H_N$ by $\ind_N$.

For $k\in E_N$, let
\formula{
n_N(k)=\#\{j\in E_N:|j-k|=1\}.
}
We use the same notation for the multiplication operator $(n_Nu)_k=n_N(k)u_k$ on $H_N$. We also define the matrix
\formula{
A_N(j,k)=\begin{cases}-1 &\text{if } |j-k|=1,\\ 0 &\text{otherwise},\end{cases}
}
for $j,k \in E_N$. Moreover, denote by $I_N = (\ind_{j=k}: j,k \in E_N)$ the identity matrix in $H_N$. The Dirichlet and Neumann Laplacians on $E_N$ are defined by
\formula{
& -\!L_N^{\mathrm D}=2I_N+A_N, & -L_N^{\mathrm N}=n_N+A_N. &
}
In particular,
\formula{
-L_N^{\mathrm D}=\ind_N(-L)\ind_N \bigg\vert_{H_N}.
}
Our Neumann Laplacian agrees with the convention in Section~5.2 of \cite{kirsch}, whereas our Dirichlet Laplacian corresponds to the restriction $(H_0)_{E_N}$ used there.

The eigenvalues of these operators, arranged in increasing order, are
\formula[eq:laplacian:eigenvalues]{
& \lambda_n(-L_N^{\mathrm N})=4\sin^2\frac{(n-1)\pi}{2N}, & \lambda_n(-L_N^{\mathrm D})=4\sin^2\frac{n\pi}{2(N+1)}, &
}
where $1\leq n\leq N$. For the Neumann eigenvalues, see the matrix $A_2$ and the corresponding eigenvalue calculation in \cite{strang1999}; the Dirichlet formula follows from Proposition 2.1 of \cite{kst}.

\subsection{Dirichlet--Neumann bracketing}
\label{sec:proof:thm:2}

\begin{proof}[Proof of Theorem~\ref{thm:main:2}]
We first establish the following operator inequalities on $H_N$:
\formula[eq:bracketing]{
(-L_N^{\mathrm N})^{\alpha/2}\leq \ind_N(-L)^{\alpha/2}\ind_N\bigg\vert_{H_N}\leq(-L_N^{\mathrm D})^{\alpha/2}.
}
For the lower bound, extend $\smash{-L_N^{\mathrm N}}$ to an operator $B_N$ on $\ell^2(\Z)$ by setting it equal to zero on $H_N^\perp$. For every $u\in\ell^2(\Z)$, we have
\formula{
0 \leq \langle B_Nu,u\rangle = \sum_{k=0}^{N-2}(u_{k+1}-u_k)^2 \leq \sum_{k\in\Z}(u_{k+1}-u_k)^2 = \langle(-L)u,u\rangle.
}
Thus, $0\leq B_N\leq-L$. By the operator monotonicity of $x\mapsto x^{\alpha/2}$ established in Lemma~\ref{lem:operator:concave}, we obtain $\smash{B_N^{\alpha/2}\leq(-L)^{\alpha/2}}$. Compressing this inequality to $H_N$ gives the lower bound in \eqref{eq:bracketing}.

For the upper bound, the operator concavity of $x\mapsto x^{\alpha/2}$ and Jensen's operator inequality from Theorem~\ref{thm:jensen} yield
\formula{
\ind_N(-L)^{\alpha/2}\ind_N\leq\bigl(\ind_N(-L)\ind_N\bigr)^{\alpha/2}.
}
Restricting both sides to $H_N$ gives the upper bound in \eqref{eq:bracketing}.

Under the Fourier transform, $-L$ is multiplication by $2-2\cos\thet$. Consequently, $(-L)^{\alpha/2}$ has symbol $(2-2\cos\thet)^{\alpha/2}=g(\thet)$, and hence
\formula{
\left.\ind_N(-L)^{\alpha/2}\ind_N\right|_{H_N}=T_N.
}
By \eqref{eq:lambdas}, \eqref{eq:bracketing} and the spectral theorem, we obtain
\formula{
\bigl(\lambda_n(-L_N^{\mathrm N})\bigr)^{\alpha/2}\leq\lambda_n(T_N)\leq\bigl(\lambda_n(-L_N^{\mathrm D})\bigr)^{\alpha/2}.
}
Using \eqref{eq:laplacian:eigenvalues}, we conclude that
\formula{
\biggl(2\sin\frac{(n-1)\pi}{2N}\biggr)^\alpha\leq\lambda_n(T_N)\leq\biggl(2\sin\frac{n\pi}{2(N+1)}\biggr)^\alpha.
}
Finally, for $1\leq n<N$, these bounds imply
\formula{
\lambda_n(T_N)\leq\biggl(2\sin\frac{n\pi}{2(N+1)}\biggr)^\alpha<\biggl(2\sin\frac{n\pi}{2N}\biggr)^\alpha\leq\lambda_{n+1}(T_N).
}
Thus, all eigenvalues of $T_N$ are simple.
\end{proof}

\section{Eigenvectors and the phase equation}
\label{sec:eigenvectors}

\subsection{Eigenvector expansion}
\label{sec:expansion}
Generalised eigenvector expansion for Toeplitz matrices with $\amcm$ entries was the main object of study in \cite{kw2026}. This work provides a detailed description of generalised eigenvectors of this class of Toeplitz matrices. Even though the paper covers many non-Hermitian cases, we summarise these results in the special case of the symmetric Fisher--Hartwig symbol (see Section~7.6 in \cite{kw2026}).

Theorem 1.1 in \cite{kw2026} applied to the Fisher--Hartwig symbol states that for every polynomial $P$ the entries of $P(T)$ are given by
\formula{
(P(T))_{k,l} = \frac{2}{\pi} \int_0^{\pi} P(g(\thet)) v_{\thet,k} v_{\thet,l} \, d\thet,
}
where
\formula[eq:v]{
v_{\thet,k} = \sin\bigl((k+1)\thet + \tfrac{1}{2}\eta(\thet)\bigr)-w_{\thet,k}
}
for $k \geq 0$, where $\eta(\thet)$ is defined in Theorem~\ref{thm:main} and $(w_{\thet,k}: k \geq 0)$ is a completely monotone sequence. Definitions and properties of $\eta(\thet)$ and $(w_{\thet,k})$ are discussed in detail in Sections \ref{sec:eta} and \ref{sec:wk}, respectively. Theorem 1.2 in \cite{kw2026} implies that $v_{\thet}$ are true eigenvectors of the infinite Toeplitz matrix $T$, that is
\formula{
Tv_\thet = g(\thet) v_\thet.
}

In order to align with the literature on Toeplitz matrices, we have modified the notation used in \cite{kw2026}. In particular, we use $\thet = 2\arcsin ( \lambda^{1/\alpha}/2 )$ as a parameter instead of $\lambda$, and our $\eta(\thet)$ corresponds to $2 \thet_\lambda^+$. Note that $\lambda = g(\thet)$. Since our matrix is symmetric, the vectors $\smash{v^\pm_{\lambda,k}}$ defined in \cite{kw2026} coincide, allowing us to omit the superscripts to simply write $\smash{v_{\thet,k}}$ (and similarly, we write $\smash{w_{\thet,k}}$ for $\smash{w_{\thet,k}^\pm}$). The results of \cite{kw2026} are applied to the sequence $(-a_k)$, whose Toeplitz matrix is $-T(a)$. The function denoted by $F$ in those formulas corresponds to $-\hat a$ in our notation.

\subsection{Definition and properties of \texorpdfstring{$\eta$}{eta(theta)}}
\label{sec:eta}

The function $\eta(\thet)$ that appears in \eqref{eq:v} is given by
\formula{
\eta(\thet) &= \frac{2}{\pi} \int_0^1 \frac{\sin \thet}{(1-s)^2 + s(2\sin (\thet/2))^2}  \, \arccot \frac{g(\thet) s^{\alpha/2}(1-s)^{-\alpha} - \cos(\alpha\pi/2)}{\sin(\alpha\pi/2)} \, ds;
}
see Eq. (7.4) in \cite{kw2026}. It is convenient to substitute $t = 2\sin(\thet/2) s^{1/2} (1-s)^{-1}$ in the above integral. After simplification we get
\formula[eq:eta]{
\eta (\thet) & = \frac{2}{\pi} \int_0^\infty \frac{t}{1+t^2} \, \frac{\cos(\thet/2)}{\sqrt{t^2+\sin^2 (\thet/2)}} \, p_\alpha(t) \, dt,
}
where we denoted
\formula{
p_\alpha(t) = \arccot \frac{t^\alpha - \cos(\alpha\pi/2)}{\sin(\alpha\pi/2)}.
}
Furthermore, by Eq. (1.14) in \cite{kw2026}, if $\thet \in (0, \pi)$, then
\formula[eq:thetas:inequality]{
\thet \leq \thet + \eta(\thet) \leq \pi.
}
We now investigate the properties of $\eta(\thet)$, starting from the following auxiliary lemma.

\begin{lemma}
\label{lem:integral}
    We have
    \formula{
    \frac{1}{\pi} \int_0^\infty \frac{p_\alpha(s)}{1+s^2} \, ds = \frac{(2-\alpha)\pi}{8}.
    }
\end{lemma}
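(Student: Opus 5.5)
The plan is to exploit the symmetry of the measure $ds/(1+s^2)$ under the inversion $s\mapsto 1/s$. The substitution $s=1/u$ gives
\formula{
\int_0^\infty \frac{p_\alpha(s)}{1+s^2}\,ds=\int_0^\infty \frac{p_\alpha(1/u)}{1+u^2}\,du .
}
Hence the integral in question equals
\formula{
\frac{1}{2\pi}\int_0^\infty \frac{p_\alpha(s)+p_\alpha(1/s)}{1+s^2}\,ds .
}
If I can show that $p_\alpha(s)+p_\alpha(1/s)$ is constant, the lemma follows from $\int_0^\infty ds/(1+s^2)=\pi/2$.

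To identify the constant, write $a=\alpha\pi/2\in(0,\pi)$, so that $\sin a>0$. The function $\arccot$ takes values in $(0,\pi)$, and the number $s^\alpha-\cos a+i\sin a=s^\alpha-e^{-ia}$ lies in the open upper half-plane. Therefore $p_\alpha(s)=\arg(s^\alpha-e^{-ia})$, with the argument taken in $(0,\pi)$. Next I factor
\formula{
s^{-\alpha}-e^{-ia}=-s^{-\alpha}e^{-ia}\bigl(s^\alpha-e^{ia}\bigr).
}
The last factor is the complex conjugate of $s^\alpha-e^{-ia}$, so its argument is $-p_\alpha(s)$. The factor $-e^{-ia}$ contributes $\pi-a$, and $s^{-\alpha}>0$ contributes nothing. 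Consequently
\formula{
p_\alpha(1/s)\equiv \pi-a-p_\alpha(s) \pmod{2\pi}.
}

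The only delicate step is removing the ambiguity modulo $2\pi$. Both sides are continuous in $s\in(0,\infty)$, and their difference is an integer multiple of $2\pi$, so the difference is constant in $s$. At $s=1$ we have $p_\alpha(1)=\arccot\bigl((1-\cos a)/\sin a\bigr)=\arccot(\tan(a/2))=\pi/2-a/2$. There the sum $p_\alpha(1)+p_\alpha(1)$ equals $\pi-a$ exactly, so the multiple of $2\pi$ is zero. (Alternatively, both sides lie in $(0,\pi)$ and $\pi-a\in(0,\pi)$, which forces the congruence to be an equality.) Thus $p_\alpha(s)+p_\alpha(1/s)=\pi-a$ for all $s>0$.

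Combining the steps,
\formula{
\frac{1}{\pi}\int_0^\infty\frac{p_\alpha(s)}{1+s^2}\,ds=\frac{1}{2\pi}\,(\pi-a)\,\frac{\pi}{2}=\frac{\pi-\alpha\pi/2}{4}=\frac{(2-\alpha)\pi}{8}.
}
Integrability is not an issue: $p_\alpha$ is bounded by $\pi$ and $(1+s^2)^{-1}$ is integrable, which justifies the substitution and the splitting of the integral.
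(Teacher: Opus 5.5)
Your proof is correct and follows the paper's argument: symmetrize the integral under $s\mapsto 1/s$, then show that $p_\alpha(s)+p_\alpha(1/s)$ is the constant $(2-\alpha)\pi/2$. The only difference is how you verify that identity. You use arguments of complex numbers and fix the branch by continuity at $s=1$, whereas the paper applies the addition formula $\arccot x+\arccot y=\arccot\bigl((xy-1)/(x+y)\bigr)$ for $x+y>0$.
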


\begin{proof}
    Denote the left-hand side by $I$. Substituting $t = 1/s$ yields
    \formula{
    I = \frac{1}{\pi} \int_0^\infty \frac{p_\alpha(t^{-1})}{1+t^2} \, dt.
    }
    It follows that
    \formula{
    2I = \frac{1}{\pi} \int_0^\infty \frac{p_\alpha(s) + p_\alpha(s^{-1})}{1+s^2} \,ds.
    }
    Using the formula $\arccot x + \arccot y = \arccot((xy-1)/(x+y))$ for $x+y> 0$, we get, after simplification,
    \formula{
    2I = \frac{1}{\pi} \int_0^\infty \frac{1}{1+s^2} \arccot\biggl(-\cot \frac{\alpha \pi}{2}\biggr) \, ds = \frac{2-\alpha}{2} \int_0^\infty \frac{1}{1+s^2} \, ds = \frac{(2-\alpha)\pi}{4}.
    }
    Thus, $I = (2-\alpha)\pi/8$. 
\end{proof}

Our next result provides upper and lower bounds for $\eta(\thet)$ and the derivative $\eta'(\thet)$.

\begin{proposition}
\label{lem:theta:plus}
The function $\eta(\thet)$ is decreasing for $\thet \in (0,\pi).$ Moreover,
\formula[eq:theta:plus:bounds]{
    & 0 \leq \eta(\thet) \leq \frac{(2-\alpha)\pi}{4}, & -\frac{2-\alpha}{2} \leq \eta'(\thet) \leq 0.&
}
\end{proposition}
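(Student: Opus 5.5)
The plan is to work directly with the representation \eqref{eq:eta}, writing
\[
\eta(\thet)=\frac{2}{\pi}\int_0^\infty K(\thet,t)\,p_\alpha(t)\,dt,\qquad K(\thet,t)=\frac{t}{1+t^2}\,\frac{\cos(\thet/2)}{\sqrt{t^2+\sin^2(\thet/2)}} .
\]
All claims then reduce to two facts: the sign and monotonicity of $p_\alpha$, and an explicit formula for $\partial_\thet K$.

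\textbf{Step 1: facts about $p_\alpha$.} Since $\sin(\alpha\pi/2)>0$ and $t\mapsto t^\alpha$ is increasing, $p_\alpha$ takes values in $(0,\pi)$ and is non-increasing in $t$. Hence $0\leq p_\alpha(t)\leq p_\alpha(0)$, where
\[
p_\alpha(0)=\arccot\bigl(-\cot(\alpha\pi/2)\bigr)=\pi-\frac{\alpha\pi}{2}=\frac{(2-\alpha)\pi}{2}.
\]

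\textbf{Step 2: bounds on $\eta$ and monotonicity.} For $\thet\in(0,\pi)$ we have $K\geq0$, so $\eta\geq0$. As $\thet$ increases, $\cos(\thet/2)$ decreases and $\sin^2(\thet/2)$ increases. Therefore $K(\thet,t)$ is pointwise decreasing in $\thet$, which already shows that $\eta$ is decreasing. Moreover
\[
K(\thet,t)\leq \frac{t}{1+t^2}\cdot\frac{1}{t}=\frac{1}{1+t^2},
\]
so Lemma~\ref{lem:integral} gives
\[
\eta(\thet)\leq \frac{2}{\pi}\int_0^\infty \frac{p_\alpha(t)}{1+t^2}\,dt=\frac{(2-\alpha)\pi}{4}.
\]

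\textbf{Step 3: the derivative (key computation).} Write $c=\cos(\thet/2)$ and $s=\sin(\thet/2)$. Using $\frac{d}{d\thet}s^2=sc$ and $\frac{d}{d\thet}c=-s/2$, a direct calculation gives
\[
\frac{d}{d\thet}\,\frac{c}{\sqrt{t^2+s^2}}
=-\frac{s}{2}\,\frac{(t^2+s^2)+c^2}{(t^2+s^2)^{3/2}}
=-\frac{s}{2}\,\frac{1+t^2}{(t^2+s^2)^{3/2}}.
\]
The factor $1+t^2$ cancels the one in $K$. Differentiating under the integral sign therefore yields
\[
\eta'(\thet)=-\frac{\sin(\thet/2)}{\pi}\int_0^\infty \frac{t\,p_\alpha(t)}{(t^2+\sin^2(\thet/2))^{3/2}}\,dt\leq0 .
\]
The interchange is justified on compact subsets of $(0,\pi)$: there $s$ is bounded below, so the derivative of the integrand is dominated by $C\,t\,(t^2+s_0^2)^{-3/2}$, which is integrable. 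This also gives a second proof that $\eta$ is decreasing.

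\textbf{Step 4: lower bound on $\eta'$.} Bound $p_\alpha\leq(2-\alpha)\pi/2$ using Step 1, and use the elementary identity
\[
\int_0^\infty \frac{s\,t}{(t^2+s^2)^{3/2}}\,dt=1 .
\]
This gives
\[
\eta'(\thet)\geq -\frac{1}{\pi}\cdot\frac{(2-\alpha)\pi}{2}=-\frac{2-\alpha}{2}.
\]

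The only delicate points are the algebraic simplification in Step 3 and evaluating $p_\alpha(0)$ on the correct branch of $\arccot$ (the one with values in $(0,\pi)$). The rest is routine.
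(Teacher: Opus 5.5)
Your proof is correct and follows the paper's argument essentially step by step. You read off nonnegativity and monotonicity directly from \eqref{eq:eta}, get the upper bound from Lemma~\ref{lem:integral}, and bound $\eta'$ using the same differentiation formula, the estimate $p_\alpha\le p_\alpha(0)=(2-\alpha)\pi/2$, and the integral identity equal to one. The only difference is cosmetic: you bound the kernel pointwise by $1/(1+t^2)$ instead of computing $\lim_{\thet\to0^+}\eta(\thet)$ by monotone convergence, which is slightly more direct, and your explicit domination argument for differentiating under the integral sign is a welcome addition.
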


\begin{proof}
    The monotonicity of $\eta$ follows immediately from \eqref{eq:eta}. To prove the bounds of $\eta$, we need to find its limits at $0^+$ and $\pi^-$. Clearly, $\eta(\thet) \geq 0$ for $\thet \in (0,\pi)$. Hence, the limit at $\pi^-$ is 0, which follows from \eqref{eq:thetas:inequality}. More precisely,
    \formula{
    0 \leq \eta(\thet) \leq \pi-\thet.
    }
    For the limit at $0^+$, we use the monotone convergence theorem combined with Lemma \ref{lem:integral}:
    \formula{
    \lim_{\thet \to 0^+} \eta(\thet) &= \frac{2}{\pi} \int_0^\infty \frac{p_\alpha(t)}{1+t^2} \, dt = \frac{\pi(2-\alpha)}{4}.
    }
    Next, we study the derivative $\eta'(\thet)$. Direct differentiation gives
    \formula{
    \frac{\partial}{\partial \thet} \, \frac{\cos (\thet/2)}{\sqrt{t^2+\sin^2 (\thet/2)}} = - \frac{1}{2} \, \frac{(1+t^2) \sin (\thet/2)}{(t^2+ \sin^2 (\thet/2))^{3/2}}.
    }
    Differentiation under the integral sign in \eqref{eq:eta} is justified by dominated convergence. Therefore,
    \formula{
    -\eta'(\thet) = \frac{1}{\pi} \int_0^\infty \frac{t\sin (\thet/2)}{(t^2+\sin^2 (\thet/2))^{3/2}} \, p_\alpha(t) \, dt.
    }
    Note that $p_\alpha(t)$ is decreasing for $t\geq0$. Hence, $p_\alpha(t) \leq p_\alpha(0) = \arccot(-\cot (\alpha\pi/2)) = (2-\alpha)\pi/2$. It follows that
    \formula{
    -\eta'(\thet) \leq \frac{2-\alpha}{2} \int_0^\infty \frac{t\sin(\thet/2)}{(t^2+\sin^2(\thet/2))^{3/2}}\,dt = \frac{2-\alpha}{2}.
    }
    A standard integration by substitution shows that the above integral is equal to one.
\end{proof}

Lastly, we find the asymptotic expansions of $\eta(\thet)$ at $0$ and $\pi$. We formulate our results as two propositions below. 

\begin{proposition}
\label{prop:asymptotics:small:mu}
    For $0<\thet \leq \pi/2$, we have
    \formula{
    \eta(\thet) = \frac{(2-\alpha)\pi}{4} - \frac{2-\alpha}{2} \thet + E_0(\thet),
    }
    where
    \formula[eq:delta]{
    0<E_0(\thet) \leq C\begin{cases}
        (1-\alpha)^{-1} \thet^{\alpha+1}, &\text{for } \alpha \in (0,1), \\
        \thet^2 \log (e/\thet) \, &\text{for } \alpha =1, \\
        (\alpha-1)^{-1} \thet^2, &\text{for } \alpha \in (1,2).
    \end{cases}
    }
\end{proposition}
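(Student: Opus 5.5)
The idea is to subtract the value of the integral \eqref{eq:eta} at $\thet = 0$, and to split $p_\alpha$ into its value at $t=0$ plus a nonnegative defect. The constant part will give the linear term exactly, and the defect will give $E_0$.

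\textbf{Step 1: reduction.} Write $s = \sin(\thet/2)$ and $c = \cos(\thet/2)$, so $c^2 = 1-s^2$. By Lemma~\ref{lem:integral}, $\frac{2}{\pi}\int_0^\infty \frac{p_\alpha(t)}{1+t^2}\,dt = \frac{(2-\alpha)\pi}{4}$. Hence
\formula{
\eta(\thet) - \frac{(2-\alpha)\pi}{4} = -\frac{2}{\pi}\int_0^\infty \frac{p_\alpha(t)}{1+t^2}\, K_\thet(t)\,dt, \qquad K_\thet(t) = 1 - \frac{tc}{\sqrt{t^2+s^2}} .
}
Note that $0 < K_\thet(t) \leq 1$.

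\textbf{Step 2: the constant part.} Write $p_\alpha(t) = p_\alpha(0) - q_\alpha(t)$, where $p_\alpha(0) = (2-\alpha)\pi/2$. Since $p_\alpha$ is decreasing (as used in Proposition~\ref{lem:theta:plus}), $q_\alpha \geq 0$.

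The term with $p_\alpha(0)$ can be computed exactly. Substitute $u = \sqrt{t^2+s^2}$:
\formula{
\int_0^\infty \frac{t\,dt}{(1+t^2)\sqrt{t^2+s^2}} = \int_s^\infty \frac{du}{c^2+u^2} = \frac{1}{c}\Bigl(\frac{\pi}{2} - \arctan\frac{s}{c}\Bigr) = \frac{1}{c}\Bigl(\frac{\pi}{2}-\frac{\thet}{2}\Bigr).
}
Therefore $\int_0^\infty \frac{K_\thet(t)}{1+t^2}\,dt = \frac{\thet}{2}$. The constant part then contributes
\formula{
-\frac{2}{\pi}\cdot\frac{(2-\alpha)\pi}{2}\cdot\frac{\thet}{2} = -\frac{(2-\alpha)\thet}{2},
}
which is exactly the linear term in the statement. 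What remains is
\formula{
E_0(\thet) = \frac{2}{\pi}\int_0^\infty \frac{q_\alpha(t)}{1+t^2}\, K_\thet(t)\,dt .
}
This is strictly positive, because $q_\alpha \geq 0$ is not identically zero and $K_\thet > 0$.

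\textbf{Step 3: the bound.} I will use two estimates.

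First, $q_\alpha(t) \leq C\min\{t^\alpha, 1\}$, with $C$ independent of $\alpha$. This follows from the mean value theorem: the argument $x(t) = (t^\alpha - \cos(\alpha\pi/2))/\sin(\alpha\pi/2)$ of $\arccot$ moves by $t^\alpha/\sin(\alpha\pi/2)$ as $t$ goes from $0$ to $t$. Combine this with $\frac{d}{dx}\arccot x = -1/(1+x^2)$ and the fact that $1+x(0)^2 = 1/\sin^2(\alpha\pi/2)$. The danger is near the endpoints $\alpha \to 0$ and $\alpha \to 2$. I expect the factor $\sin(\alpha\pi/2)$ to cancel: for small $t$ the derivative bound gives $q_\alpha \lesssim \sin(\alpha\pi/2)\,t^\alpha$, and the trivial bound $q_\alpha \leq \pi$ covers the rest. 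Checking this uniformity is the step I expect to be the main obstacle.

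Second,
\formula{
K_\thet(t) \leq \Bigl(1 - \frac{t}{\sqrt{t^2+s^2}}\Bigr) + (1-c) \leq \min\{1, s^2/t^2\} + C\thet^2 .
}

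\textbf{Step 4: integration.} The term $C\thet^2$ contributes at most $C\thet^2\int_0^\infty \frac{dt}{1+t^2} \leq C\thet^2$. The main term I split at $t = s$ and $t = 1$:
\begin{enumerate}
\item on $(0,s)$: $\int_0^s t^\alpha\,dt \leq C s^{\alpha+1}$;
\item on $(s,1)$: $s^2\int_s^1 t^{\alpha-2}\,dt$;
\item on $(1,\infty)$: $s^2\int_1^\infty t^{-2}\,dt \leq C s^2$.
\end{enumerate}
The middle integral gives the three cases of \eqref{eq:delta}. For $\alpha < 1$ it is at most $s^{\alpha+1}/(1-\alpha)$. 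For $\alpha = 1$ it equals $s^2\log(1/s)$. For $\alpha > 1$ it is at most $s^2/(\alpha-1)$. Finally, on $(0,\pi/2]$ we have $s \approx \thet$, and the leftover $\thet^2$ terms are dominated by each case bound. This gives \eqref{eq:delta}.
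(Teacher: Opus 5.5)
Your proposal is correct and follows the paper's proof essentially step for step: the same subtraction via Lemma~\ref{lem:integral}, the same split of $p_\alpha$ into $p_\alpha(0)$ minus a nonnegative defect with $\int_0^\infty K_\thet(t)(1+t^2)^{-1}\,dt=\thet/2$, and the same integration over $(0,s)$, $(s,1)$, $(1,\infty)$. The differences are cosmetic. The paper avoids your extra $C\thet^2$ term through the exact identity $K_\thet(t)/(1+t^2)=s^2/\bigl(\sqrt{t^2+s^2}\,(tc+\sqrt{t^2+s^2})\bigr)\leq\min\{1,s^2/t^2\}$. It also bounds the defect via $p_\alpha(0)-p_\alpha(t)=p_\alpha(1/t)$ and $\arccot x\leq 1/x$ instead of the mean value theorem.

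Your mean-value route to uniformity in $\alpha$ also closes at once. For $0\leq u\leq t^\alpha\leq 1/2$, the point $x=(u-\cos(\alpha\pi/2))/\sin(\alpha\pi/2)$ satisfies $1+x^2=|e^{i\alpha\pi/2}-u|^2/\sin^2(\alpha\pi/2)\geq 1/(4\sin^2(\alpha\pi/2))$ along the whole segment. This gives $q_\alpha(t)\leq 4\sin(\alpha\pi/2)\,t^\alpha$ with a constant independent of $\alpha$.
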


\begin{proof}
    Using \eqref{eq:eta} and Lemma \ref{lem:integral}, we find that
    \formula{
     \frac{(2-\alpha)\pi}{4}-\eta(\thet) = \frac{2}{\pi} \int_0^\infty \frac{1}{1+t^2} \, \biggl( 1- \frac{t\cos(\thet/2)}{\sqrt{t^2+\sin^2 (\thet/2)}} \biggr) \, p_\alpha(t) \, dt.
    }
    Let us denote
    \formula{
    f_\thet(t) &= \frac{1}{1+t^2} \, \biggl( 1- \frac{t\cos (\thet/2)}{\sqrt{t^2+\sin^2 (\thet/2)}} \biggr) = \frac{\sin^2 (\thet/2)}{\sqrt{t^2 + \sin^2 (\thet/2)}\bigl( t \cos (\thet/2) + \sqrt{t^2 + \sin^2 (\thet/2)} \bigr)}.
    }
    We have
    \formula{
    \int_0^\infty f_\thet(t) \, dt = \int_0^\infty \frac{1}{1+t^2} \, dt - \int_0^\infty \frac{t}{1+t^2} \, \frac{\cos(\thet/2)}{\sqrt{t^2+\sin^2 (\thet/2)}} \, dt = \frac{\pi}{2} - \biggl( \frac{\pi}{2} - \frac{\thet}{2}\biggr) = \frac{\thet}{2}.
    }
    It follows that
    \formula{
   \frac{(2-\alpha)\pi}{4}-\eta(\thet)  &= \frac{2}{\pi} \int_0^\infty f_\thet(t) p_\alpha(t) \, dt \\
    &= \frac{2}{\pi} \biggl( p_\alpha(0)\int_0^\infty f_\thet(t) \, dt + \int_0^\infty f_\thet(t) (p_\alpha(t)-p_\alpha(0)) \, dt \biggr) \\
    &= \frac{2-\alpha}{2}\thet - \frac{2}{\pi} \int_0^\infty f_\thet(t) (p_\alpha(0)-p_\alpha(t)) \, dt.
    }
    Therefore,
    \formula[eq:E0]{
    E_0(\thet) = \frac{2}{\pi} \int_0^\infty f_\thet(t) (p_\alpha(0)-p_\alpha(t)) \, dt.
    }
    Since $f_\theta(t)>0$ and $p_\alpha$ is strictly decreasing, $E_0(\theta)>0$.
    To prove the upper bound, we need the following two observations.

    First, for $0< \thet \leq \pi/2$, we have
    \formula{
    f_\thet(t) \leq \frac{\sin^2(\thet/2)}{t^2+\sin^2(\thet/2)} \leq \min\biggl\{1, \frac{\sin^2(\thet/2)}{t^2} \biggr\}.
    }
    
    Second, we claim that
    \formula[eq:qs]{
    p_\alpha(0)-p_\alpha(t) \leq C\min\{1, t^\alpha\}.
    }
    Note the identity $p_\alpha(0)-p_\alpha(t) = p_\alpha(t^{-1})$, which we have already exploited in the proof of Lemma~\ref{lem:integral}. If $t^\alpha \leq 1/2$, then we use $\arccot x \leq x^{-1}$ for $x>0$, and find that
    \formula{
    p_\alpha(t^{-1}) \leq \frac{t^\alpha \sin(\alpha\pi/2)}{1-t^\alpha\cos(\alpha\pi/2)} \leq 2t^\alpha.
    }
    If $t^\alpha > 1/2$, then we estimate $p_\alpha(t^{-1}) \leq \pi$, and \eqref{eq:qs} follows.

    With \eqref{eq:E0}--\eqref{eq:qs}, denoting $r=\sin(\thet/2)$, we obtain
    \formula[]{
    E_0(\thet) &\leq C\biggl( \int_0^r t^\alpha \, dt + r^2 \int_{r}^1 t^{\alpha-2} \, dt + r^2 \int_1^\infty t^{-2} \, dt \biggr) \nonumber \\
    &= C\biggl( \frac{r^{\alpha+1}}{\alpha+1} + r^2 \int_r^1 t^{\alpha-2} \, dt + r^2\biggr) \label{eq:E0:1}
    }
    If $\alpha \in (0,1)$, omitting the constant $C$, \eqref{eq:E0:1} becomes
    \formula{
    \frac{r^{\alpha+1}}{\alpha+1} + r^2 \frac{r^{\alpha-1}-1}{1-\alpha} + r^2 \leq \frac{r^{\alpha+1} +r^{\alpha+1}-r^2 +r^2}{1-\alpha} = \frac{2r^{\alpha+1}}{1-\alpha} \leq \frac{C\thet^{\alpha+1}}{1-\alpha}.
    }
    If $\alpha \in (1,2)$, we estimate the expression on the right-hand side of \eqref{eq:E0:1} as follows:
    \formula{
    \frac{r^{\alpha+1}}{\alpha+1} + r^2 \frac{1-r^{\alpha-1}}{\alpha-1} + r^2 \leq \frac{r^{\alpha+1} + r^2-r^{\alpha+1}+ r^2}{\alpha-1} = \frac{2r^2}{\alpha-1} \leq \frac{C\thet^{2}}{\alpha-1}.
    }
    If $\alpha =1$, we rewrite \eqref{eq:E0:1} as
    \formula{
    \frac{r^2}{2} + r^2 \log\frac{1}{r} + r^2 \leq Cr^2 \log\frac{e}{r} \leq C\thet^2 \log \frac{e\pi}{\thet} \leq C\thet^2 \log \frac{e}{\thet}.
    }
    In particular, for $\thet \in (0, \pi/2]$, we have $\log (e/\thet) \geq \log(2e/\pi) > 0$.
\end{proof}

\begin{proposition}
\label{prop:eta:big}
    For $\pi/4 \leq \thet < \pi$, we have
    \formula{
    \eta(\thet) = C_\alpha (\pi - \thet) + E_\pi(\thet),
    }
    where
    \formula{
    C_\alpha = \frac{1}{\pi} \int_0^\infty \frac{t}{(1+t^2)^{3/2}} \, p_\alpha(t) \,dt.
    }
    and
    \formula{
    0 < E_\pi(\thet) \leq C(\pi-\thet)^3.
    }
\end{proposition}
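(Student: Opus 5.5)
Throughout I substitute $\phi=\pi-\thet\in(0,3\pi/4]$, $x=\phi/2$, $s=\sin x$ and $u=1+t^2$. Since $\cos(\thet/2)=s$ and $\sin^2(\thet/2)=1-s^2$, formula \eqref{eq:eta} becomes
\begin{align*}
\eta(\thet)=\frac{2}{\pi}\int_0^\infty \frac{t}{u}\,h(t)\,p_\alpha(t)\,dt,\qquad h(t)=\frac{s}{\sqrt{u-s^2}}.
\end{align*}
Freezing $h$ at its first-order value $s/\sqrt{u}$ suggests splitting $h=s u^{-1/2}+D(t)$, where
\begin{align*}
D(t)=\frac{s}{\sqrt{u-s^2}}-\frac{s}{\sqrt{u}}>0.
\end{align*}
Set $A=\int_0^\infty t\,u^{-3/2}p_\alpha(t)\,dt$, so that $C_\alpha=A/\pi$ and $C_\alpha(\pi-\thet)=\frac{2}{\pi}\,x\,A$. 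Then
\begin{align*}
E_\pi(\thet)=\frac{2}{\pi}\biggl(\int_0^\infty \frac{t}{u}\,D(t)\,p_\alpha(t)\,dt-(x-s)A\biggr).
\end{align*}
The two terms compete: $D$ is positive and of order $s^3u^{-3/2}$, while $x-s>0$ is of order $\phi^3$. So positivity is the real issue and cannot be read off pointwise.

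\textbf{Positivity.} Applying the same computation to $p_\alpha\equiv1$ gives the identity
\begin{align*}
\int_0^\infty \frac{t}{u}\,h\,dt=\frac{\pi}{2}-\frac{\thet}{2}=x,
\end{align*}
which was already used in the proof of Proposition~\ref{prop:asymptotics:small:mu}. Combined with $\int_0^\infty t\,u^{-3/2}\,dt=1$, this yields
\begin{align*}
\int_0^\infty \frac{t}{u}\,D\,dt=x-s.
\end{align*}
Now introduce the probability measure $\mu(dt)=t\,u^{-3/2}\,dt$ on $(0,\infty)$, and write
\begin{align*}
\int_0^\infty \frac{t}{u}\,D\,p_\alpha\,dt=\int_0^\infty q\,p_\alpha\,d\mu,\qquad q(t)=\sqrt{u}\,D(t)=s\biggl(\Bigl(1-\frac{s^2}{u}\Bigr)^{-1/2}-1\biggr).
\end{align*}
The function $q$ is strictly decreasing in $t$, and $p_\alpha$ is strictly decreasing, as noted in the proof of Proposition~\ref{lem:theta:plus}. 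Chebyshev's integral inequality for two similarly ordered functions therefore gives
\begin{align*}
\int q\,p_\alpha\,d\mu>\int q\,d\mu\int p_\alpha\,d\mu=(x-s)A.
\end{align*}
This is exactly $E_\pi(\thet)>0$, and the inequality is strict because both functions are strictly monotone.

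\textbf{Upper bound.} Since $s^2/u\leq s^2\leq \sin^2(3\pi/8)<1$ on the range $\thet\geq\pi/4$, we have $(1-s^2/u)^{-1/2}-1\leq C s^2/u$. Hence $D\leq C s^3u^{-3/2}\leq C\phi^3u^{-3/2}$. Using also $p_\alpha\leq\pi$, we get
\begin{align*}
\int_0^\infty \frac{t}{u}\,D\,p_\alpha\,dt\leq C\phi^3\int_0^\infty t\,u^{-5/2}\,dt=C\phi^3.
\end{align*}
The subtracted term is harmless, since $(x-s)A\geq0$ and in any case $(x-s)A\leq \tfrac{x^3}{6}\cdot\tfrac{\pi^2}{2}$. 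Together these give $0<E_\pi(\thet)\leq C(\pi-\thet)^3$, with $C$ absolute.

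The main obstacle is the positivity step. A naive third-order Taylor expansion of $h$ in $\phi$ has a cubic coefficient whose sign changes with $t$, so no pointwise argument works. The key is to rewrite the error as a covariance with respect to $\mu$ and then use the monotonicity of $p_\alpha$ via Chebyshev's inequality.
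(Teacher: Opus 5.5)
Your proof is correct and is essentially the paper's argument. In your notation the paper's $h_\thet(t)=f_t(\thet)-\frac{\thet}{2\sqrt{1+t^2}}$ equals $u^{-1/2}\bigl(q(t)-(x-s)\bigr)$, so its single-sign-change argument with the subtraction of $p_\alpha(t_0)$ is a hand proof of the Chebyshev covariance inequality you apply to the decreasing functions $q$ and $p_\alpha$ under $\mu$. The one genuine difference is the upper bound: you do not need the paper's third-derivative estimate $|f_t'''|\leq C(1+t^2)^{-1/2}$, because you bound $D\leq Cs^3u^{-3/2}$ directly and drop the nonnegative term $(x-s)A$.
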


\begin{proof}
    Observe that
    \formula{
    \eta(\pi - \thet) = \frac{2}{\pi} \int_0^\infty \frac{t}{1+t^2} \, \frac{\sin (\thet/2)}{\sqrt{t^2 + \cos^2 (\thet/2)}} \, p_\alpha(t) \,dt.
    }
    Let $\smash{f_t(\thet) = \sin (\thet/2)(t^2+\cos^2(\thet/2))^{-1/2}}$. Since $f_t$ is odd, $f_t(0) = f_t''(0) = 0$. Moreover, $f'_t(0) = \smash{\frac{1}{2}(1+t^2)^{-1/2}}$. Applying Taylor's theorem yields
    \formula[eq:f:taylor]{
    f_t(\thet) = \frac{\thet}{2\sqrt{1+t^2}} + \frac{\thet^3}{6} f'''_t(\xi_t)
    }
    for $0<\thet \leq 3\pi/4$ and some $\xi_t \in (0, \thet)$.
    By direct calculation, $|f_t'''(\thet)|\leq C(1+t^2)^{-1/2}$ uniformly for $t\geq0$ and $0\leq \thet \leq3\pi/4$. Consequently, by \eqref{eq:f:taylor},
    \formula{
    |\eta(\pi - \thet) - C_\alpha \thet| &\leq \frac{2}{\pi} \int_0^\infty \frac{t}{1+t^2} \, p_\alpha(t) \, \bigg\lv f_t(\thet) - \frac{\thet}{2\sqrt{1+t^2}}\bigg\rv \, dt \\
    &\leq C\thet^3 \int_0^\infty \frac{t}{(1+t^2)^{3/2}} \, dt = C \thet^3,
    }
    which proves that $E_\pi(\thet) \leq C(\pi-\thet)^3$.
    
    It remains to show that $E_\pi(\thet)>0$. Let
    \formula{
    h_\thet(t) = f_t(\thet) - \frac{\thet}{2\sqrt{1+t^2}} = \frac{\thet}{2\sqrt{1+t^2}} \biggl( \frac{\sin(\thet/2)}{\thet/2} \, \sqrt{\frac{1+t^2}{t^2+\cos^2(\thet/2)}} -1 \biggr).
    }
    The quotient in the brackets above is a decreasing function of $t>0$. Furthermore, for $t=0$ it is equal to $\tan(\thet/2)/(\thet/2) > 1$, and for $t\to\infty$ it tends to $\sin(\thet/2)/(\thet/2) < 1$. Therefore, there is $t_0>0$ such that $h_\thet(t) > 0$ for $t<t_0$ and $h_\thet(t) < 0$ for $t>t_0$. Consider the following integral:
    \formula{
    \int_0^\infty \frac{t}{1+t^2} h_\thet(t) \, dt = \int_0^\infty \frac{t}{1+t^2} \, \frac{\sin(\thet/2)}{\sqrt{t^2+\cos^2(\thet/2)}}\, dt - \frac{\thet}{2}\int_0^\infty \frac{t}{(1+t^2)^{3/2}} \, dt.
    }
    The last integral above is equal to $1$. After substituting $s = \sqrt{t^2+\cos^2(\thet/2)}$, the middle integral is equal to
    \formula{
    \int_{\cos(\thet/2)}^\infty \frac{\sin(\thet/2)}{s^2+ \sin^2(\thet/2)} \, ds = \frac{\thet}{2}.
    }
    Therefore, 
    \formula{
    \int_0^\infty \frac{t}{1+t^2} h_\thet(t) \, dt =0.
    }
    Using this, we rewrite
    \formula{
    E_\pi(\pi-\thet) = \eta(\pi-\thet) - C_\alpha \thet &= \frac{2}{\pi} \int_0^\infty \frac{t}{1+t^2} h_\thet(t) p_\alpha(t)  \, dt \\ &= \frac{2}{\pi} \int_0^\infty \frac{t}{1+t^2} h_\thet(t) (p_\alpha(t)-p_\alpha(t_0)) \, dt >0,
    }
    where we used the monotonicity of $p_\alpha(t)$.
\end{proof}

\subsection{Difference quotients of the symbol}

The difference quotient of the symbol $\hat a$ is given by
\formula[eq:dq]{
\hat a (\thet;z)= \frac{(z-1)^2(\hat a(z)-g(\thet))}{(e^{i\thet}-z)(e^{-i\thet}-z)},
}
defined for $z \in \C \setminus ([0,\infty)\cup\{e^{i\thet}, e^{-i\thet}\})$ and extended continuously at $z = e^{\pm i \thet}$. Analogous difference quotient functions and their Wiener--Hopf factorisation played an important role in related works, e.g. $F(\lambda;x)$ in Eq. (1.8) of \cite{kw2026} or $b(t,s)$ on page 7 of \cite{bg2022}. The Wiener--Hopf factorisation of $\hat a (\thet;z)$ reads
\formula[eq:dq:wh]{
\hat a (\thet;z) = \hat a ^+(\thet;z)\hat a ^-(\thet;z^{-1}).
}
We normalise the factors by requiring $\hat a^+(\thet;0)=\hat a^-(\thet;0)>0$. With this normalisation, symmetry gives $\hat a^+(\thet;z)=\hat a^-(\thet;z)$. Thus, \eqref{eq:dq:wh} becomes
\formula{
\hat a(\thet;z) = \hat a^\pm(\thet;z) \hat a^\pm(\thet;z^{-1}).
}
Explicit formulas for $\hat a ^\pm(\thet;z)$ can be found in \cite{kw2026} (see Eq. (7.5) therein), although we do not need them here.

By (4.55)--(4.56) in \cite{kw2026}, the function $x\mapsto x/\hat a^\pm(\thet;1-x)$ is a complete Bernstein function. Moreover, by (4.57) therein, $(1-z)/\hat a^\pm(\thet;z)$ has a finite strictly positive limit as $z\to1$ nontangentially in $\mathbb C\setminus[1,\infty)$.

\subsection{Definition and properties of \texorpdfstring{$w_{\thet,k}$}{w-theta,k}}
\label{sec:wk}

The sequence $(w_{\thet,k}: k \geq 0)$ from \eqref{eq:v} is completely monotone and it is given by the following formula (see Eq. (7.6) in \cite{kw2026}):
\formula[eq:wk]{
w_{\thet,k} = \frac{1}{\pi} \, \sin \thet \, \frac{\lv \hat a^\pm(\thet; e^{i\thet}) \rv}{\lv 1 - e^{i\thet} \rv} \int_0^1 s^k \, \frac{\hat a^\pm(\thet; s)}{1-s} \, \frac{s^{\alpha/2} (1 - s)^{-\alpha} \sin(\alpha\pi/2)}{\lv e^{i \alpha\pi/2} - g(\thet) s^{\alpha/2} (1 - s)^{-\alpha} \rv^2} \, ds.
}
For convenience, we extend the sequence to all integer indices by setting $w_{\thet,k}=0$ for $k<0$.

Some results on $(w_{\thet,k})$ have already been provided in \cite{kw2026}. In particular, by Eqs. (1.15) and (1.16) therein,
\formula[eq:w:sin]{
0 \leq w_{\thet,k} \leq \sin\bigl(\thet + \tfrac{1}{2} \eta(\thet)\bigr)
}
and
\formula[eq:wk:sum:1]{
\sum_{k=0}^\infty w_{\thet,k} \leq \im \frac{e^{i\eta(\thet)/2}}{e^{-i\thet}-1} = \frac{\cos(\thet/2 + \eta(\thet)/2)}{2\sin(\thet/2)} \leq \frac{1}{2}\cot \frac{\thet}{2}.
}
Above we used $0 \leq \eta(\thet) \leq \pi-\thet$.

We now estimate the decay of $w_{\thet,k}$ as $k\to\infty$. We begin with two estimates for the Wiener--Hopf factors.

\begin{lemma}
\label{lem:F:estimate}
    For $x \in (0,1), \thet \in (0, \pi)$ we have
    \formula{
    \frac{\hat a^\pm(\thet;x)}{1-x} \leq \biggl(2\sin \frac{\thet}{2}\biggr)^{-(2-\alpha)/2}.
    } 
\end{lemma}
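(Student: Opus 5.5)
We use the two facts about the Wiener--Hopf factors recorded just before the statement. First, the function $\phi(x)=x/\hat a^\pm(\thet;1-x)$ is a complete Bernstein function, hence increasing on $(0,\infty)$. Second, $(1-z)/\hat a^\pm(\thet;z)$ has a finite positive limit as $z\to1$.

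Write $x\in(0,1)$ and set $y=1-x\in(0,1)$. Then
\[
\frac{\hat a^\pm(\thet;x)}{1-x}=\frac{1}{\phi(y)}.
\]
Since $\phi$ is increasing, $1/\phi(y)$ is decreasing in $y$, so it is bounded above by its limit as $y\to0^+$. Equivalently, $\hat a^\pm(\thet;x)/(1-x)$ is increasing in $x\in(0,1)$. The claim therefore reduces to identifying
\[
\lim_{x\to1^-}\frac{\hat a^\pm(\thet;x)}{1-x}
\]
and showing that it equals $(2\sin(\thet/2))^{-(2-\alpha)/2}$, or is at most this value. The second cited fact guarantees that this limit exists and is finite.

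To compute the limit, use the factorisation \eqref{eq:dq:wh} together with symmetry, $\hat a(\thet;z)=\hat a^\pm(\thet;z)\hat a^\pm(\thet;z^{-1})$. Let $z\to1$ along the unit circle, $z=e^{i\phi}$ with $\phi\to0$, approaching nontangentially from the region where the limit statement applies. By \eqref{eq:dq},
\[
\hat a(\thet;z)=\frac{(z-1)^2\bigl(\hat a(z)-g(\thet)\bigr)}{(e^{i\thet}-z)(e^{-i\thet}-z)}.
\]
As $z\to1$ we have $\hat a(z)\to0$ and $(e^{i\thet}-1)(e^{-i\thet}-1)=|1-e^{i\thet}|^2=g(\thet)^{2/\alpha}$. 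Hence
\[
\hat a(\thet;z)\sim -(z-1)^2\,g(\thet)^{1-2/\alpha}.
\]
On the other side, if $L=\lim_{z\to1}\hat a^\pm(\thet;z)/(1-z)$, then
\[
\hat a^\pm(\thet;z)\,\hat a^\pm(\thet;z^{-1})\sim L^2(1-z)(1-z^{-1})=-L^2\frac{(1-z)^2}{z}.
\]
Comparing the two expressions gives
\[
L^2=g(\thet)^{1-2/\alpha}=\Bigl(2\sin\frac{\thet}{2}\Bigr)^{\alpha-2},
\qquad\text{so}\qquad
L=\Bigl(2\sin\frac{\thet}{2}\Bigr)^{-(2-\alpha)/2},
\]
using positivity from the normalisation. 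Combined with the monotonicity above, this gives the stated bound.

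The main obstacle is justifying that $L$ is the same whether $z\to1$ through real $x<1$, along the circle, or through $z^{-1}$ (which approaches $1$ from the opposite side), since the factors are only known to have a nontangential limit in $\C\setminus[1,\infty)$. I would check that points of the unit circle near $1$, and their inverses, approach $1$ nontangentially within the domain where the limit holds, so that the same $L$ applies to both factors. If this is delicate, the fallback is to use the explicit formula (7.5) of \cite{kw2026} for $\hat a^\pm$ and evaluate the limit directly.
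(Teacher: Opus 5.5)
Your proposal is correct and follows the paper's proof essentially step by step: the complete Bernstein property makes $\hat a^\pm(\thet;x)/(1-x)$ increasing, so the bound reduces to the limit as $x\to1^-$, which is identified by letting $z\to1$ along the unit circle in $\hat a(\thet;z)=\hat a^\pm(\thet;z)\hat a^\pm(\thet;z^{-1})$ and comparing with \eqref{eq:dq}. The nontangentiality concern resolves as you expect: near $1$ the unit circle meets the slit $[1,\infty)$ at angle $\pi/2$, so both $z$ and $z^{-1}=\bar z$ stay in a nontangential approach region of $\C\setminus[1,\infty)$, and the single limit $L$ applies to both factors.
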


\begin{proof}
Recall that $x/\hat a^\pm(\thet;1-x)$ is a complete Bernstein function, hence it is increasing on $(0,1)$. It follows that $\hat a^\pm(\thet;x)/(1-x)$ is also increasing on $(0,1)$, and so
\formula{
\frac{\hat a^\pm(\thet;x)}{1-x} \leq \lim_{x \to 1^-} \frac{\hat a^\pm(\thet;x)}{1-x}.
}
Recall that by (4.57) in \cite{kw2026}, the above limit exists nontangentially in $\mathbb C\setminus[1,\infty)$ and belongs to $(0,\infty)$. For $|z|=1$, $z \neq 1$, we have
\formula[eq:F:prod]{
\frac{\hat a^\pm(\thet;z)}{1-z}\, \frac{\hat a^\pm(\thet;z^{-1})}{1-z^{-1}} = -\frac{z\hat a(\thet;z)}{(1-z)^2},
}
where $\hat a(\thet;z)$ is a difference quotient function given by \eqref{eq:dq}. Taking the limit as $z \to 1$ along the complex unit circle from above, we obtain
\formula[eq:limit]{
\lim_{z \to 1} \biggl( -\frac{z\hat a(\thet;z)}{(1-z)^2} \biggr) = -\frac{\hat a(1) - g(\thet)}{|e^{i\thet}-1|^2} = \frac{(2\sin \frac{\thet}{2})^\alpha}{(2\sin\frac{\thet}{2})^2} = \biggl(2\sin \frac{\thet}{2}\biggr)^{-(2-\alpha)}. 
}
By \eqref{eq:F:prod} and \eqref{eq:limit}, we have
\formula{
\lim_{x \to 1^-} \frac{\hat a^\pm(\thet;x)}{1-x} = \lim_{z \to 1} \frac{\hat a^\pm(\thet;z)}{1-z} = \biggl(2\sin \frac{\thet}{2}\biggr)^{-(2-\alpha)/2},
}
as desired.
\end{proof}

\begin{lemma}
\label{lem:F:estimate:2}
We have
\formula{
\frac{|\hat a^\pm(\thet;e^{i\thet})|}{|1-e^{i\thet}|} = \sqrt{\frac{\alpha}{2}} \biggl( 2\sin \frac{\thet}{2}\biggr)^{-(2-\alpha)/2}.
}
\end{lemma}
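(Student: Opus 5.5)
The plan is to evaluate the factorisation identity \eqref{eq:F:prod} at the point $z=e^{i\thet}$. We use the same strategy as in the proof of Lemma~\ref{lem:F:estimate}, but take the limit at $z\to e^{i\thet}$ instead of $z\to1$.

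First, observe that the Wiener--Hopf factor $\hat a^\pm(\thet;\cdot)$ is real on $(-1,1)$; this follows from the normalisation $\hat a^\pm(\thet;0)>0$ and the explicit formula (7.5) in \cite{kw2026}. By the Schwarz reflection principle, its boundary values then satisfy $\hat a^\pm(\thet;\bar z)=\overline{\hat a^\pm(\thet;z)}$. Taking $z=e^{i\thet}$ and $z^{-1}=\bar z$, the left-hand side of \eqref{eq:F:prod} becomes
\[
\frac{\hat a^\pm(\thet;e^{i\thet})}{1-e^{i\thet}}\,\overline{\biggl(\frac{\hat a^\pm(\thet;e^{i\thet})}{1-e^{i\thet}}\biggr)}=\frac{|\hat a^\pm(\thet;e^{i\thet})|^2}{|1-e^{i\thet}|^2}.
\]
Here we need the factors to extend continuously to $e^{\pm i\thet}$. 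This holds because $\hat a(\thet;\cdot)$ is continuous and non-vanishing on $\torus\setminus\{1\}$ after the removable singularities at $e^{\pm i\thet}$ are filled in. Alternatively, we may take the limit along the circle $z=e^{i\ph}$, $\ph\to\thet$, on both sides of \eqref{eq:F:prod}.

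Next, we compute the right-hand side of \eqref{eq:F:prod}. By \eqref{eq:dq},
\[
-\frac{z\hat a(\thet;z)}{(1-z)^2}=-\frac{z(\hat a(z)-g(\thet))}{(e^{i\thet}-z)(e^{-i\thet}-z)}.
\]
For $z=e^{i\ph}$ we have $(e^{i\thet}-z)(e^{-i\thet}-z)=z\,(2\cos\ph-2\cos\thet)$. Write $G(\ph)=2-2\cos\ph=(2\sin\frac{\ph}{2})^2$, so that $g=G^{\alpha/2}$. The expression then equals
\[
\frac{g(\ph)-g(\thet)}{G(\ph)-G(\thet)},
\]
which is real and positive, consistent with the left-hand side. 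Letting $\ph\to\thet$, this difference quotient converges to the derivative of $x\mapsto x^{\alpha/2}$ at $x=G(\thet)$, namely
\[
\frac{\alpha}{2}\,G(\thet)^{\alpha/2-1}=\frac{\alpha}{2}\biggl(2\sin\frac{\thet}{2}\biggr)^{-(2-\alpha)}.
\]

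Equating the two sides and taking square roots gives the claimed formula. The main point needing care is the continuity and conjugate-symmetry of the factors at $e^{i\thet}$. The algebra in the last step is routine.
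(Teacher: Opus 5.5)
Your proposal is correct and is essentially the paper's proof. Conjugate symmetry of the factors on $\torus$ turns the factorisation into $|\hat a^\pm(\thet;e^{i\ph})/(1-e^{i\ph})|^2=(g(\ph)-g(\thet))/(2\cos\thet-2\cos\ph)$, and letting $\ph\to\thet$ gives $\frac{\alpha}{2}(2\sin\frac{\thet}{2})^{\alpha-2}$; your chain-rule form $\frac{\alpha}{2}G(\thet)^{\alpha/2-1}$ is the same as the paper's $g'(\thet)/(2\sin\thet)$. One remark of yours is loose: continuity and non-vanishing of a symbol do not by themselves give continuity of its Wiener--Hopf factors. Here, though, continuity is immediate, because the complete Bernstein property makes $\hat a^\pm(\thet;\cdot)$ holomorphic on $\C\setminus[1,\infty)$, which contains $e^{i\thet}$.
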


\begin{proof}
By the Wiener--Hopf factorisation, the identity $\hat a^\pm(\thet;e^{-it})=\overline{\hat a^\pm(\thet;e^{it})}$, and \eqref{eq:dq}, for $t\in(0,\pi)\setminus\{\thet\}$ we have
\formula{
\biggl|\frac{\hat a^\pm(\thet;e^{it})}{1-e^{it}}\biggr|^2=\frac{g(t)-g(\thet)}{2\cos\thet-2\cos t}.
}
Letting $t\to\thet$ and using continuity, we obtain
\formula{
\biggl|\frac{\hat a^\pm(\thet;e^{i\thet})}{1-e^{i\thet}}\biggr|^2=\frac{g'(\thet)}{2\sin\thet}=\frac{\alpha}{2}\biggl(2\sin\frac{\thet}{2}\biggr)^{\alpha-2}.
}
Taking square roots completes the proof.
\end{proof}

\begin{lemma}
\label{lem:last:int}
For $s \in (0,1)$ we have
\formula{
    \frac{s^{\alpha/2}(1-s)^{-\alpha}}{|e^{i\alpha\pi/2} - g(\thet) s^{\alpha/2} (1-s)^{-\alpha}|^2} \leq \frac{C}{\alpha^2} \, \frac{s^{-\alpha/2} (1-s)^\alpha}{ (g(\thet))^2}.
}
\end{lemma}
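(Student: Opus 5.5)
The bound is purely algebraic in the single positive quantity
\[
x = g(\thet)\, s^{\alpha/2}(1-s)^{-\alpha} > 0 ,
\]
so I would reduce the lemma to an elementary inequality in $x$. Writing $y = s^{\alpha/2}(1-s)^{-\alpha}$, so that $x = g(\thet) y$, the claimed inequality $y/|e^{i\alpha\pi/2}-x|^2 \leq C\alpha^{-2}\, y^{-1}(g(\thet))^{-2}$ becomes, after multiplying both sides by $y\,(g(\thet))^2 > 0$,
\[
\frac{x^2}{|e^{i\beta}-x|^2} \leq \frac{C}{\alpha^2}, \qquad \beta = \frac{\alpha\pi}{2}\in(0,\pi).
\]
I will prove this for every $x>0$, with a constant independent of $\alpha$, $s$ and $\thet$.

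Expand $|e^{i\beta}-x|^2 = 1 - 2x\cos\beta + x^2$ and divide by $x^2$. With $u = 1/x > 0$ we need a lower bound for
\[
\frac{|e^{i\beta}-x|^2}{x^2} = u^2 - 2u\cos\beta + 1 = (u-\cos\beta)^2 + \sin^2\beta .
\]
I split into two cases according to the sign of $\cos\beta$.

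If $\alpha \in [1,2)$, then $\cos\beta \leq 0$. Hence $u^2 - 2u\cos\beta + 1 \geq 1$ for all $u>0$, so the ratio $x^2/|e^{i\beta}-x|^2$ is at most $1 \leq 4/\alpha^2$.

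If $\alpha \in (0,1)$, the completed square gives the lower bound $\sin^2(\alpha\pi/2)$. By concavity of $\sin$ on $[0,\pi/2]$, we have $\sin(\alpha\pi/2) \geq \alpha$ for $\alpha\in[0,1]$. Therefore the ratio is at most $1/\sin^2(\alpha\pi/2) \leq 1/\alpha^2$.

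Combining the two cases gives the lemma with $C=4$.

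There is no real obstacle here. The only point needing care is that $1/\sin^2(\alpha\pi/2)$ blows up as $\alpha\to2$, which would not give the stated $\alpha^{-2}$ dependence. This is why the case $\alpha\geq1$ must use the sign of $\cos\beta$ instead of the completed-square bound.
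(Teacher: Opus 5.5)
Your proposal is correct and follows the paper's proof: the same substitution reduces the claim to $x^2/|e^{i\alpha\pi/2}-x|^2\leq C/\alpha^2$, and the case $\alpha\in[1,2)$ is handled the same way via $\cos(\alpha\pi/2)\leq 0$. For $\alpha\in(0,1)$, your completed-square lower bound $\sin^2(\alpha\pi/2)$ is the same estimate as the paper's imaginary-part bound $|1-e^{-i\alpha\pi/2}x|\geq x\sin(\alpha\pi/2)$, and you additionally make the constant explicit ($C=4$) via $\sin(\alpha\pi/2)\geq\alpha$.
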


\begin{proof}
    Let $t = g(\thet) s^{\alpha/2}(1-s)^{-\alpha}$. Then $t>0$ and the assertion can be rewritten as
    \formula[eq:rewritten]{
    \frac{t^2}{|e^{i\alpha \pi/2} - t|^2} \leq \frac{C}{\alpha^2}.
    }
    For $\alpha \in [1,2)$, we estimate $|e^{i\alpha\pi/2}-t|^2 = t^2+1-2t\cos(\alpha\pi/2) \geq t^2$. Hence, \eqref{eq:rewritten} holds for some constant $C$. Now assume that $\alpha \in (0,1)$. Since $|e^{i\alpha \pi/2} - t| = |1 - e^{-i\alpha\pi/2}t| \geq |\im(1 - e^{-i\alpha\pi/2}t)| = t \sin (\alpha\pi/2)$, we estimate
    \formula*{
    \frac{t^2}{|e^{i\alpha \pi/2} - t|^2} \leq \frac{1}{\sin^2 (\alpha\pi/2)} \leq \frac{C}{\alpha^2}. \qedhere
    }  

\end{proof}

Now we are ready to obtain a decay estimate for the sequence $(w_{\thet,k})$. 

\begin{proposition}
\label{prop:wk}
    For every $k \geq 1$ we have
    \formula[eq:w:bound]{
    w_{\thet,k} \leq \frac{C}{\alpha^{3/2}}\,\frac{\cot(\thet/2)}{g(\thet)}\, \frac{1}{k^{\alpha+1}}.
    }
\end{proposition}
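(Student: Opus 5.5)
We plan to bound the integrand in \eqref{eq:wk} factor by factor, using Lemmas \ref{lem:F:estimate}, \ref{lem:F:estimate:2} and \ref{lem:last:int}, and then to evaluate the remaining $s$-integral as a Beta function. The prefactor is handled by Lemma \ref{lem:F:estimate:2}:
\formula{
\frac{1}{\pi}\sin\thet\,\frac{|\hat a^\pm(\thet;e^{i\thet})|}{|1-e^{i\thet}|} = \frac{1}{\pi}\sqrt{\frac{\alpha}{2}}\,\sin\thet\,\biggl(2\sin\frac{\thet}{2}\biggr)^{-(2-\alpha)/2}.
}
Lemma \ref{lem:F:estimate} bounds the factor $\hat a^\pm(\thet;s)/(1-s)$ by $(2\sin(\thet/2))^{-(2-\alpha)/2}$. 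For the last factor, we use Lemma \ref{lem:last:int} together with $\sin(\alpha\pi/2)\leq C\alpha$, which gives the bound $C\alpha^{-1}s^{-\alpha/2}(1-s)^{\alpha}g(\thet)^{-2}$.

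Multiplying the powers of $2\sin(\thet/2)$ gives $(2\sin(\thet/2))^{\alpha-2}=g(\thet)\,(2\sin(\thet/2))^{-2}$. Since $\sin\thet=2\sin(\thet/2)\cos(\thet/2)$, the full prefactor becomes a constant times $\cot(\thet/2)\,g(\thet)/\sin(\thet/2)\cdot g(\thet)^{-2}$, up to the power of $\alpha$.

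This leaves a stray factor $1/\sin(\thet/2)$, which is the main obstacle. The crude Lemma \ref{lem:last:int} discards the $\thet$-dependence inside the denominator, so we would refine it. Rather than bound the last factor uniformly, we plan to keep one factor of $g(\thet)$ from the denominator $|e^{i\alpha\pi/2}-t|^2$. We use $t^2/|e^{i\alpha\pi/2}-t|^2\leq C\alpha^{-2}$ only partially: we split it as $t\cdot t/|\cdot|^2$ and bound $t/|e^{i\alpha\pi/2}-t|^2\leq C/(\alpha\, t)$ or similar. We would then balance the powers so that the $s$-integral remains $\int_0^1 s^{k-\alpha/2}(1-s)^{\alpha}\,ds$ and the $\thet$-dependence collapses exactly to $\cot(\thet/2)/g(\thet)$.

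Concretely, we expect that after combining all the factors the net $\thet$-dependence must be $\cos(\thet/2)\sin(\thet/2)^{-1}g(\thet)^{-1}$. We would choose the split of Lemma \ref{lem:last:int} to achieve this, if necessary using $2\sin(\thet/2)\leq 2$ to absorb leftover nonnegative powers. As a fallback we would also use the direct bound $w_{\thet,k}\leq 1$ from \eqref{eq:w:sin}.

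It remains to estimate the Beta integral. We have
\formula{
\int_0^1 s^{k-\alpha/2}(1-s)^{\alpha}\,ds = B(k+1-\alpha/2,\,1+\alpha) = \Gamma(1+\alpha)\,\frac{\Gamma(k+1-\alpha/2)}{\Gamma(k+2+\alpha/2)}.
}
By the gamma-ratio estimate 6.1.47 in \cite{abramowitz}, already used in Lemma \ref{lem:ak:bounds}, this is at most $C k^{-(1+\alpha)}$ for $k\geq1$, uniformly in $\alpha\in(0,2)$. Collecting the powers of $\alpha$ ($\alpha^{1/2}$ from the prefactor and $\alpha^{-2}$ from the denominator, with one $\alpha$ regained from $\sin(\alpha\pi/2)$ or lost depending on the split) then gives the stated $C\alpha^{-3/2}$.
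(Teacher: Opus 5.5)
Your first paragraph, combined with your Beta-function estimate, is exactly the paper's proof, and it already finishes the argument. The ``stray factor $1/\sin(\thet/2)$'' that you call the main obstacle comes from an algebra slip. Collect the $\thet$-dependence from the prefactor (Lemma~\ref{lem:F:estimate:2}), from Lemma~\ref{lem:F:estimate}, and from the $g(\thet)^{-2}$ of Lemma~\ref{lem:last:int}:
\[
\sin\thet\,\Bigl(2\sin\tfrac{\thet}{2}\Bigr)^{\alpha-2} g(\thet)^{-2}
= \frac{2\sin(\thet/2)\cos(\thet/2)}{4\sin^2(\thet/2)}\, g(\thet)^{-1}
= \frac{1}{2}\,\frac{\cot(\thet/2)}{g(\thet)} .
\]
The point is that $\sin\thet/\sin^2(\thet/2)=2\cot(\thet/2)$, not $\cot(\thet/2)/\sin(\thet/2)$. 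So Lemma~\ref{lem:last:int} needs no refinement. If you use $\sin(\alpha\pi/2)\leq C\alpha$ as you propose, you get the constant $C\alpha^{-1/2}$. This implies the stated $C\alpha^{-3/2}$ because $\alpha<2$; the paper simply uses $\sin(\alpha\pi/2)\leq 1$.

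As written, however, the middle of your proposal is not a proof. It defers to an unspecified ``split'' plus a fallback $w_{\thet,k}\leq 1$, and that fallback carries no decay in $k$. Moreover, the split you sketch could not have worked even if the obstacle had been real. The last factor of the integrand equals $g(\thet)^{-1}\,t/|e^{i\alpha\pi/2}-t|^2$ with $t=g(\thet)s^{\alpha/2}(1-s)^{-\alpha}$. Bounding $t/|e^{i\alpha\pi/2}-t|^2$ by a multiple of $1/t$ just reproduces Lemma~\ref{lem:last:int}. Any bound by a multiple of $t^{-\gamma}$ with $\gamma<1$ does gain a factor $g(\thet)^{1-\gamma}$. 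But it replaces $(1-s)^{\alpha}$ by $(1-s)^{\gamma\alpha}$, so the Beta integral only gives decay $k^{-1-\gamma\alpha}$. The factor $(1-s)^\alpha$ near $s=1$ is precisely what produces $k^{-\alpha-1}$. Within this method you cannot improve the $\thet$-dependence without losing $k$-decay, so correcting the arithmetic is the right fix.
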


\begin{proof}
Using \eqref{eq:wk} and Lemmas \ref{lem:F:estimate}--\ref{lem:last:int}, we get
\formula{
w_{\thet,k} &\leq \frac{C}{\alpha^{3/2}} \biggl( \sin \frac{\thet}{2}\biggr)^{-\alpha-1} \cos \frac{\thet}{2} \int_0^1 s^{k-\alpha/2}(1-s)^{\alpha} \, ds \\
&= \frac{C}{\alpha^{3/2}} \, \frac{\cot (\thet/2)}{g(\thet)} \int_0^1 s^{k-\alpha/2}(1-s)^{\alpha} \, ds.
}
The integral above is the beta function. By the asymptotics of the ratio of gamma functions, that is $\Gamma(z)/\Gamma(z+\gamma) \leq Cz^{-\gamma}$ for $\gamma>0$ and $z\geq z_0>0$, we get
\formula{
\int_0^1 s^{k-\alpha/2}(1-s)^{\alpha} \, ds = \frac{\Gamma(k-\alpha/2+1)\Gamma(\alpha+1)}{\Gamma(k+\alpha/2+2)}\leq \frac{C}{k^{\alpha+1}},
}
and \eqref{eq:w:bound} follows.
\end{proof}

\subsection{Auxiliary estimates}
\label{sec:auxiliary:estimates}

\begin{lemma}
\label{lem:T:g}
    Assume that a sequence $(g_k: k \in \Z)$, such that $g_k = 0$ for $k<0$, satisfies
    \formula{
    & \sum_{k \in \Z} |g_k| \leq M_1, & \max_{0 \leq k \leq N/2}|g_k| \leq M_2, && \max_{-N/4\leq k \leq 3N/4} |\Delta^2 g_k| \leq M_3 &
    }
    for some $M_1,M_2,M_3>0$. Then, for $0 \leq k \leq N/2$ we have
    \formula{
    |(Tg)_k| \leq \frac{C}{N^{\alpha+1}} \biggl( \alpha M_1 + M_2 N + \frac{\alpha}{2-\alpha} M_3 N^3 \biggr).
    }
\end{lemma}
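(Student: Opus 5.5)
The plan is to write $(Tg)_k=\sum_{j\in\Z}a_{k-j}g_j$ (with $g_j=0$ for $j<0$) and use the zero-sum identity $a_0=-\sum_{m\neq 0}a_m$, which holds because $\hat a(1)=0$. This gives
\begin{align*}
(Tg)_k=\sum_{m\neq 0}a_m\,(g_{k-m}-g_k),\qquad 0\leq k\leq N/2.
\end{align*}
I then split the sum into a near part $0<|m|\leq N/4$ and a far part $|m|>N/4$. The near part should produce the $M_3$ term, and the far part the $M_1$ and $M_2$ terms. Throughout I use that $a_m=a_{-m}\leq 0$ for $m\neq0$ and $|a_m|\leq C\alpha |m|^{-\alpha-1}$, which is Lemma~\ref{lem:ak:bounds}.

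For the near part, I pair $m$ with $-m$ using the symmetry of $(a_m)$. This gives
\begin{align*}
\sum_{1\leq m\leq N/4}a_m\,(g_{k+m}+g_{k-m}-2g_k).
\end{align*}
The symmetric second difference $g_{k+m}+g_{k-m}-2g_k$ equals a sum of $m^2$ terms, counted with multiplicity, of the form $\Delta^2 g_l$. Concretely, it is $\sum_{i=0}^{m-1}\sum_{l} \Delta^2 g_l$ over indices $l$ between $k-m$ and $k+m-2$. Since $0\leq k\leq N/2$ and $m\leq N/4$, every such index lies in $[-N/4,3N/4]$, so each term is at most $M_3$ in absolute value. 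Hence this difference is at most $m^2M_3$ in absolute value. The near part is then bounded by
\begin{align*}
C\alpha M_3\sum_{m\leq N/4}m^{1-\alpha}\leq \frac{C\alpha}{2-\alpha}M_3N^{2-\alpha},
\end{align*}
and $N^{2-\alpha}=N^3/N^{\alpha+1}$, which is the third term in the claimed bound.

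For the far part, I bound $|g_{k-m}-g_k|\leq |g_{k-m}|+|g_k|$ and treat the two pieces separately. The first piece is at most $\sup_{|m|>N/4}|a_m|\cdot\sum_j|g_j|\leq C\alpha N^{-\alpha-1}M_1$. For the second piece, $k\leq N/2$ gives $|g_k|\leq M_2$, so it is at most $M_2\sum_{|m|>N/4}|a_m|\leq C\alpha M_2N^{-\alpha}\leq CM_2N/N^{\alpha+1}$.

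The main point needing care is the index bookkeeping in the near part: the window $[-N/4,3N/4]$ in the hypothesis on $\Delta^2 g$ must cover all second differences used. For small $N$ (say $N<4$), the near range is empty and only the far estimates are needed, so the argument still goes through with a uniform constant $C$. Summing the near and far bounds gives the statement.
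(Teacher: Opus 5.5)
Your proposal is correct and is essentially the paper's argument. Pairing $m$ with $-m$ to form the symmetric second difference uses the symmetry of $(a_m)$ in the same way as the paper's subtraction of the linear term $(j-k)\Delta g_k$ via $\sum_{j\in J_1}(j-k)a_{j-k}=0$, and the near/far split at $N/4$ and the far-part estimates are the same as in the paper.

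One small slip: $\sum_{|m|>N/4}|a_m|$ is of order $N^{-\alpha}$, not $\alpha N^{-\alpha}$, because the factor $\alpha$ from Lemma~\ref{lem:ak:bounds} is cancelled by the $1/\alpha$ coming from the tail $\sum_{m>N/4}m^{-\alpha-1}$. Your intermediate bound should therefore read $CM_2N^{-\alpha}$. This is harmless, since the statement has no factor $\alpha$ in front of $M_2$.
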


\begin{proof}
    Let $(g_k)$ be a sequence satisfying the assumptions. Since $a_0 = -\sum_{k \neq 0}a_k$, we get
    \formula[eq:T:lemma]{
    (Tg)_k = \sum_{j = -\infty}^\infty a_{j-k} g_j = \sum_{j \neq k} a_{j-k} g_j + a_0 g_k = \sum_{j \neq k} (g_j - g_k)a_{j-k}.
    }
    We claim that
    \formula[eq:claim:lemma]{
    |g_j - g_k - (j-k) \Delta g_k| \leq M_3(j-k)^2
    }
    whenever $-N/4\leq j,k \leq3N/4$.
    
    If $j=k$ or $j=k+1$, then \eqref{eq:claim:lemma} is obvious. Assume that $j \geq k+2$ and observe that
    \formula{
    g_j - g_k = \sum_{m=k}^{j-1} \Delta g_m &= \sum_{m=k}^{j-1} \biggl( \Delta g_k + \sum_{i=k}^{m-1} \Delta^2 g_i  \biggr) = (j-k) \Delta g_k + \sum_{m=k}^{j-1} \sum_{i=k}^{m-1} \Delta^2 g_i \\
    &= (j-k) \Delta g_k + \sum_{i=k}^{j-2} \sum_{m=i+1}^{j-1} \Delta^2 g_i = (j-k) \Delta g_k + \sum_{i=k}^{j-2} (j-i-1) \Delta^2 g_i.
    }
    Therefore,
    \formula{
    |g_j - g_k - (j-k) \Delta g_k| &= \bigg\lvert \sum_{i=k}^{j-2} (j-i-1) \Delta^2 g_i \bigg\rvert \\
    &\leq \max_{- N/4 \leq k \leq 3N/4} |\Delta^2 g_k| \bigg\lvert \sum_{i=k}^{j-2} (j-i-1) \bigg\rvert \leq M_3(j-k)^2.
    }
    Similarly, if $j<k$, we find that
    \formula{
    \lvert g_j - g_k - (j-k) \Delta g_k \rvert = \bigg\lvert \sum_{m=j}^{k-1} \sum_{i=m}^{k-1} 
    \Delta^2 g_i \bigg\rvert = \bigg\lvert \sum_{i=j}^{k-1} \sum_{m=j}^{i} 
    \Delta^2 g_i \bigg\rvert = \bigg\lvert \sum_{i=j}^{k-1} (i-j+1) \Delta^2 g_i \bigg\rvert \leq M_3 (j-k)^2,
    }
    which proves our claim.
    
    Let $0 \leq k \leq N/2$. We denote $J_1 = \{j \neq k: |j-k|\leq N/4\}$ and $J_2 = \{j: |j-k| > N/4\}$. By symmetry, $\sum_{j\in J_1}(j-k)a_{j-k}=0$. The right-hand side of \eqref{eq:T:lemma} is therefore equal to
    \formula{
    \sum_{j \in J_1} (g_j - g_k - (j-k) \Delta g_k) a_{j-k} + \sum_{j \in J_2} (g_j - g_k) a_{j-k}.
    }
    Note that $0 \leq k \leq N/2$ and $|j-k| \leq N/4$ imply that $-N/4 \leq j \leq 3N/4$. Hence, using Lemma~\ref{lem:ak:bounds} and \eqref{eq:claim:lemma}, we estimate the first sum:
    \formula[]{
    \bigg\lvert \sum_{j \in J_1} (g_j - g_k - (j-k) \Delta g_k) a_{j-k} \bigg\rvert &\leq M_3 \sum_{j \in J_1} (j-k)^2 \lv a_{j-k} \rv \leq C \alpha M_3 \sum_{j \in J_1} \frac{1}{|j-k|^{\alpha-1}} \nonumber \\
    &= C \alpha M_3 \sum_{j=1}^{\floor{N/4}} \frac{1}{j^{\alpha-1}}\leq \frac{C \alpha M_3}{2-\alpha} \frac{1}{N^{\alpha-2}}. \label{eq:lem:ineq:1}
    }
The remaining sum reads
\formula[]{
\bigg\lvert \sum_{j \in J_2} (g_j - g_k) a_{j-k} \bigg\rvert &\leq \lv a_{\floor{N/4}+1} \rv \sum_{j \in J_2} |g_j| + |g_k| \sum_{j \in J_2} \lv a_{j-k} \rv \nonumber \\
&\leq \frac{C \alpha }{N^{\alpha+1}}  \sum_{k \in \Z} |g_k| + \max_{0 \leq k \leq N/2} |g_k| \sum_{j \geq N/4} \frac{C \alpha }{j^{\alpha+1}} \leq \frac{C \alpha  M_1}{N^{\alpha+1}} + \frac{C M_2}{N^{\alpha}}. \label{eq:lem:ineq:2}
}
Using \eqref{eq:T:lemma}, \eqref{eq:lem:ineq:1} and \eqref{eq:lem:ineq:2}, we find that for $0 \leq k\leq N/2$
\formula{
|(Tg)_k| \leq \frac{C \alpha  M_1}{N^{\alpha+1}} + \frac{C M_2}{N^{\alpha}} +\frac{C \alpha M_3}{2-\alpha} \frac{1}{N^{\alpha-2}} = \frac{C}{N^{\alpha+1}} \biggl( \alpha M_1 + M_2 N + \frac{\alpha}{2-\alpha} M_3 N^3 \biggr),
}
as desired.
\end{proof}

\begin{lemma}
\label{lem:T:hk}
Let $\ph\in(0,2\pi)$, $\psi\in\R$, and $h_j=\sin((j+1)\ph+\psi)$. For $0\leq k\leq N/2$, we have
\formula{
\biggl|\sum_{j=N}^\infty a_{j-k}h_j\biggr|\leq\frac{C \alpha }{N^{\alpha+1}\sin^2(\ph/2)}\biggl(\sin\frac{\ph}{2}\,\bigl|\cos((N+\tfrac12)\ph+\psi)\bigr|+|\sin\ph|+\frac{|\sin\psi|}{N}\biggr).
}
\end{lemma}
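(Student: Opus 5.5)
We will prove the bound by summation by parts applied twice, exploiting that the oscillating factor $h_j$ has explicit partial sums while $a_{j-k}$ is smooth and decaying for $j\geq N$, $k\leq N/2$ (so $j-k\geq N/2$). Write $m=j-k\geq N-k\geq N/2$. Set $b_j=a_{j-k}$ for $j\geq N$. By Lemma~\ref{lem:ak:bounds}, $|b_j|\leq C\alpha N^{-\alpha-1}$, $|\Delta b_j|\leq C\alpha (j-k)^{-\alpha-2}$ and $|\Delta^2 b_j|\leq C\alpha(j-k)^{-\alpha-3}$. Since $(-a_k)$ is completely monotone, $\Delta b_j$ and $\Delta^2 b_j$ have constant signs, so sums of their absolute values telescope: $\sum_{j\geq N}|\Delta b_j|=|b_N|\leq C\alpha N^{-\alpha-1}$ and $\sum_{j\geq N}|\Delta^2 b_j| = |\Delta b_N|\leq C\alpha N^{-\alpha-2}$.

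For the first summation by parts, we introduce a primitive $H_j$ with $H_{j+1}-H_j=h_j$. Explicitly, we take
\[
H_j=-\frac{\cos((j+\tfrac12)\ph+\psi)}{2\sin(\ph/2)},
\]
which satisfies $H_{j+1}-H_j=\sin((j+1)\ph+\psi)$ by the product-to-sum formula. Abel summation then gives
\[
\sum_{j\ge N} b_jh_j=-b_NH_N-\sum_{j\geq N}H_{j+1}\Delta b_j ,
\]
with no boundary term at infinity since $b_j\to0$ and $H$ is bounded. The first term is bounded by $C\alpha N^{-\alpha-1}|\cos((N+\frac12)\ph+\psi)|/\sin(\ph/2)$, which is exactly the first term of the claimed bound after multiplying and dividing by $\sin(\ph/2)$.

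The remaining sum $\sum_{j\ge N} H_{j+1}\Delta b_j$ is the main obstacle. A crude bound only yields $C\alpha N^{-\alpha-1}/\sin(\ph/2)$, which is not of the required form: we need $|\sin\ph|+|\sin\psi|/N$ in place of $\sin(\ph/2)$. So we sum by parts a second time. The sequence $H_{j+1}$ has primitive
\[
K_j=-\frac{\sin((j+1)\ph+\psi)}{4\sin^2(\ph/2)},
\]
up to sign conventions, since the difference of $\sin$ terms produces $\cos$ with the factor $2\sin(\ph/2)$. This gives a boundary term $K_N\Delta b_N$ plus $\sum_j K_{j+1}\Delta^2 b_j$. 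Both are bounded by $C\alpha N^{-\alpha-2}/\sin^2(\ph/2)$, using the telescoping bound for $\sum|\Delta^2 b_j|$.

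This still carries $N^{-\alpha-2}$ with a bare $1/\sin^2(\ph/2)$ factor rather than $|\sin\psi|/N$ or $|\sin\ph|$. To match the stated form, we write $\sin((N+1)\ph+\psi)$ in $K_N$ via the addition formula around $(N+\frac12)\ph+\psi$:
\[
\sin\bigl((N+1)\ph+\psi\bigr)=\sin\bigl((N+\tfrac12)\ph+\psi\bigr)\cos\frac{\ph}{2}+\cos\bigl((N+\tfrac12)\ph+\psi\bigr)\sin\frac{\ph}{2}.
\]
The cosine part is absorbed into the first term of the bound, since $N^{-1}\leq1$. If the sine part resists, we expect the fallback is to bound $|K|\leq 1/(4\sin^2(\ph/2))$ and then verify, possibly by splitting into the cases $\ph$ near $\pi$ and elsewhere, that $N^{-1}\lesssim\sin(\ph/2)|\cos(\cdot)|+|\sin\ph|+|\sin\psi|/N$ up to the needed precision. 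If that comparison fails in some regime, the statement must instead rely on the precise form of the second-order terms; we would then redo the second step, keeping $\Delta b_N$ explicit via the ratio $a_{m+1}/a_m$ from the proof of Lemma~\ref{lem:ak:bounds}.
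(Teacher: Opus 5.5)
\textbf{There is a genuine gap: your estimate of the second-order terms does not yield the stated bound.}

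Your decomposition is the same as the paper's. Both Abel summations use $H_j$ and $K_j$ exactly as you wrote them; both primitives are correct, so no sign caveat is needed. This gives
\[
\sum_{j\geq N}b_jh_j=-b_NH_N+K_N\,\Delta b_N+\sum_{j\geq N}K_{j+1}\,\Delta^2 b_j,
\]
and your treatment of $-b_NH_N$ is fine.

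The gap is in the last two terms. Bounding $|K_j|\leq 1/(4\sin^2(\ph/2))$ gives $C\alpha N^{-\alpha-2}\sin^{-2}(\ph/2)$, and this does not imply the lemma. Your fallback comparison $N^{-1}\leq C\bigl(\sin(\ph/2)|\cos((N+\frac12)\ph+\psi)|+|\sin\ph|+|\sin\psi|/N\bigr)$ is false. For $\psi=0$ and $0<\ph\ll 1/N$, the bracket is at most $2\ph$, so your estimate exceeds the claimed right-hand side by a factor of order $1/(N\ph)$. The same happens for $\psi=0$ and $\ph=\pi-\delta$ with $\delta\ll N^{-2}$. Making $\Delta b_N$ explicit would not help. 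The loss comes from replacing the oscillating factors $\sin((N+1)\ph+\psi)$ and $\sin((j+2)\ph+\psi)$ by $1$, not from the coefficients.

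The missing idea is the elementary inequality $|\sin(m\ph+\psi)|\leq|\sin(m\ph)|+|\sin\psi|\leq m|\sin\ph|+|\sin\psi|$ for integers $m\geq0$. The factor $m$ is paid for by the extra decay of $\Delta^2 a$. Since $j-k\geq j/2$, Lemma~\ref{lem:ak:bounds} gives $(j+2)|\Delta^2 a_{j-k}|\leq C\alpha j^{-\alpha-2}$, so
\[
\sum_{j\geq N}\bigl|\sin((j+2)\ph+\psi)\bigr|\,|\Delta^2 a_{j-k}|\leq|\sin\ph|\sum_{j\geq N}(j+2)|\Delta^2 a_{j-k}|+|\sin\psi|\,\Delta a_{N-k}\leq\frac{C\alpha}{N^{\alpha+1}}\Bigl(|\sin\ph|+\frac{|\sin\psi|}{N}\Bigr).
\]
Likewise, $|\sin((N+1)\ph+\psi)|\,\Delta a_{N-k}\leq\bigl((N+1)|\sin\ph|+|\sin\psi|\bigr)C\alpha N^{-\alpha-2}$. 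Dividing by $4\sin^2(\ph/2)$ gives exactly the last two terms of the lemma; this is what the paper does.

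Incidentally, your cruder bound would still suffice for the application in Step~2 of Lemma~\ref{lem:TN}, since $n(N+1-n)\geq N$ for $1\leq n\leq N$. It does not, however, prove the lemma as stated.
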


\begin{proof}
Let $M \geq N$ and observe that
\formula{
H_M = \sum_{j=N}^M h_j = \frac{\cos\bigl(\ph(N+1/2)+\psi\bigr)-\cos\bigl(\ph(M+3/2) +\psi\bigr)}{2\sin (\ph/2)}.
}
Using Abel's summation by parts, we find that
\formula{
\sum_{j=N}^M a_{j-k} h_j = a_{M-k} H_M  - \sum_{j=N}^{M-1} H_j \Delta a_{j-k}.
}
Since $H_M$ is bounded and $a_{M-k}\to0$ as $M\to\infty$, letting $M\to\infty$ yields
\formula[]{
 \sum_{j=N}^\infty a_{j-k} h_j &= -\sum_{j=N}^\infty H_j \Delta a_{j-k} \nonumber \\
&=\frac{1}{2\sin (\ph/2)} \biggl( -\cos\bigl(\ph(N+\tfrac{1}{2})+\psi\bigr) \sum_{j=N}^\infty \Delta a_{j-k} +\sum_{j=N}^\infty \cos\bigl(\ph(j+\tfrac{3}{2})+\psi\bigr)\Delta a_{j-k}\biggr) \label{eq:sum:1}
}
By Lemma~\ref{lem:ak:bounds}, we have
\formula[eq:sum:2]{
 \sum_{j=N}^\infty \Delta a_{j-k} = -a_{N-k} \leq \frac{C \alpha }{N^{\alpha+1}}.
}
Applying Abel's summation formula once more, we obtain
\formula*[eq:sum:3]{
\sum_{j=N}^\infty \cos&\bigl(\ph(j+\tfrac{3}{2})+\psi\bigr)\Delta a_{j-k} \\
&= \frac{1}{2\sin(\ph/2)} \biggl( - \sum_{j=N}^\infty \sin\bigl(\ph(j+2)+\psi \bigr) \Delta^2 a_{j-k} - \sin\bigl((N+1)\ph+\psi\bigr) \Delta a_{N-k}\biggr).
}
Note that for $m \geq 0$ we have
\formula[eq:sin:sum]{
\lvert \sin(\ph m +\psi)\rvert \leq \lvert \sin(\ph m)\rvert + \lvert\sin \psi\rvert \leq m \lvert \sin \ph\rvert + \lvert\sin \psi\rvert.
}
Therefore,
\formula[eq:sum:4]{
\bigg\lvert \sum_{j=N}^\infty \sin\bigl(\ph(j+2)+\psi \bigr) \Delta^2 a_{j-k} \bigg\rvert \leq \lvert \sin \ph \rvert \sum_{j=N}^\infty (j+2) |\Delta^2 a_{j-k}| + \lvert \sin \psi\rvert \Delta a_{N-k}.
}
By Lemma~\ref{lem:ak:bounds},
\formula[eq:Dak:lemma]{
\Delta a_{N-k} \leq \frac{C \alpha }{N^{\alpha+2}}
}
and
\formula{
\sum_{j=N}^\infty (j+2) |\Delta^2 a_{j-k}| \leq \sum_{j=N}^\infty \frac{C \alpha }{j^{\alpha+2}} \leq \frac{C \alpha }{N^{\alpha+1}}.
}
Thus, we can estimate \eqref{eq:sum:4} as follows:
\formula[eq:new:1]{
\bigg\lvert \sum_{j=N}^\infty \sin\bigl(\ph(j+2)+\psi \bigr) \Delta^2 a_{j-k} \bigg\rvert \leq \frac{C \alpha }{N^{\alpha+1}} \biggl(\lvert\sin \ph\rvert + \tfrac{1}{N}\lvert\sin \psi\rvert \biggr).
}
Again, using \eqref{eq:sin:sum} and \eqref{eq:Dak:lemma}, we get
\formula[eq:new:2]{
\big\lvert \sin\bigl((N+1)\ph + \psi\bigr) \Delta a_{N-k} \big\rvert \leq \frac{C \alpha }{N^{\alpha+1}} \biggl(\lvert\sin \ph\rvert + \tfrac{1}{N}\lvert\sin \psi\rvert \biggr)
}
By \eqref{eq:new:1} and \eqref{eq:new:2}, we estimate \eqref{eq:sum:3}:
\formula[eq:sum:5]{
\bigg\lvert \sum_{j=N}^\infty \cos\bigl(\ph(j+\tfrac{3}{2})+\psi\bigr)\Delta a_{j-k} \bigg\rvert \leq \frac{C \alpha }{N^{\alpha+1} \sin (\ph/2)}\, \biggl(\lvert\sin \ph\rvert + \tfrac{1}{N}\lvert\sin \psi\rvert \biggr).
}
Finally, by \eqref{eq:sum:1}, \eqref{eq:sum:2}, \eqref{eq:sum:5}, we arrive at the desired estimate.
\end{proof}

\section{Approximate eigenvectors}
\label{sec:approximate:eigenvectors}

\subsection{Construction of the approximate eigenvectors}
\label{sec:construction}

Let $N \geq 2$. We construct the approximations $\tilde v_{n,k}$ of the eigenvectors of $T_N$ by combining eigenvectors of $T$ and connecting them in a sufficiently smooth way with an appropriate sequence $(q_k: k \in \Z)$ which will be defined later in Section~\ref{sec:proofs}. Our methods were inspired by \cite{kkms2010, kwasnicki2012, kkm2013, kkm2016}, but needed to be adapted to the discrete setting.

The approximate eigenvectors are defined in the following way:
\formula[eq:v:approx]{
\tilde{v}_{n,k} = v_{\thet_n, k} q_{N-1-k} - (-1)^{n} v_{\thet_n, N-1-k}q_k.
}
for $1\leq n\leq N$ and $0\leq k\leq N-1$.

We choose $\thet_n$ so that the sine components of $v_{\thet_n,k}$ and $(-1)^{n-1}v_{\thet_n,N-1-k}$ agree. More precisely, we require:
\formula[eq:sines]{
\sin\biggl((k+1)\thet_n+\frac{\eta(\thet_n)}{2}\biggr)=(-1)^{n-1}\sin\biggl((N-k)\thet_n+\frac{\eta(\thet_n)}{2}\biggr).
}
Observe that \eqref{eq:sines} is satisfied if
\formula[eq:arg:eq]{
(N+1) \thet + \eta(\thet) = n \pi,
}
where $1 \leq n \leq N$.

From now on, we refer to equation \eqref{eq:arg:eq} as the \textit{phase equation}.

\subsection{Solutions of the phase equation}

In this section, we will prove the existence and uniqueness of the solutions of the phase equation \eqref{eq:arg:eq} and study their distribution.

\begin{proposition}
\label{prop:solutions}
    For every $1 \leq n \leq N$, the phase equation \eqref{eq:arg:eq} has exactly one solution $\thet_n \in (0,\pi)$.
\end{proposition}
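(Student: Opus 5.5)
Define $F(\thet) = (N+1)\thet + \eta(\thet)$ on $(0,\pi)$. The plan is to show that $F$ is continuous and strictly increasing, and that its range contains $[\pi, N\pi]$. The intermediate value theorem then gives existence, and strict monotonicity gives uniqueness, for every $1\leq n\leq N$.

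\textbf{Monotonicity.} By Proposition~\ref{lem:theta:plus}, $\eta$ is differentiable on $(0,\pi)$ (differentiation under the integral sign was justified there), and
\formula{
F'(\thet) = (N+1) + \eta'(\thet) \geq N+1 - \frac{2-\alpha}{2} > N > 0 .
}
So $F$ is continuous and strictly increasing on $(0,\pi)$.

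\textbf{Boundary behaviour.} I will compute the limits of $F$ at both endpoints, using the limits of $\eta$ already obtained in the proof of Proposition~\ref{lem:theta:plus}.

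At $0^+$: $\eta(\thet)\to (2-\alpha)\pi/4$, hence $F(0^+) = (2-\alpha)\pi/4$. Since $\alpha>0$, this is strictly less than $\pi/2 < \pi$.

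At $\pi^-$: from $0\leq \eta(\thet)\leq \pi-\thet$ (inequality \eqref{eq:thetas:inequality}) we get $\eta(\pi^-)=0$, hence $F(\pi^-) = (N+1)\pi$. This is strictly greater than $N\pi$.

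\textbf{Conclusion.} Since $F$ is continuous and strictly increasing, it maps $(0,\pi)$ bijectively onto the open interval $\bigl((2-\alpha)\pi/4,\,(N+1)\pi\bigr)$. This interval contains $n\pi$ for every $1\leq n\leq N$, so the equation $F(\thet)=n\pi$ has exactly one solution $\thet_n\in(0,\pi)$. As a by-product, $\thet_1<\thet_2<\cdots<\thet_N$.

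The only delicate step is justifying continuity and differentiability of $\eta$ on the open interval, together with its endpoint limits. Both were already established in Proposition~\ref{lem:theta:plus} (via dominated and monotone convergence), so I expect no real obstacle. Strict monotonicity needs only the lower bound $\eta'\geq -(2-\alpha)/2 > -(N+1)$; in fact, even without derivative bounds, $\eta$ being decreasing would not suffice alone, which is why that lower bound on $\eta'$ is used.
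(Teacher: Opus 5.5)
Your proof is correct and follows the paper's argument: continuity and strict monotonicity of $(N+1)\thet+\eta(\thet)$ come from the derivative bound $\eta'\geq-(2-\alpha)/2$, and the endpoint limits $(2-\alpha)\pi/4<\pi$ and $(N+1)\pi$ then give existence by the intermediate value theorem. You also state explicitly that uniqueness follows from strict monotonicity, which the paper leaves implicit.
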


\begin{proof}
    Let us denote the left-hand side of \eqref{eq:arg:eq} by
    \formula{
    \zeta(\thet) = (N+1)\thet + \eta(\thet).
    }
    Note that this is a continuous function of $\thet$. By Proposition~\ref{lem:theta:plus},
    \formula{
    & \lim_{\thet \to 0^+} \zeta(\thet) = \frac{\pi(2-\alpha)}{4} < \pi, &
    \lim_{\thet \to \pi^-} \zeta(\thet) = (N+1)\pi. &
    }
    Furthermore, 
    \formula{
    \zeta'(\thet)=N+1+\eta'(\thet)\geq N+\frac{\alpha}{2}>0.
    }
    The intermediate value theorem therefore gives a solution for every $1\leq n\leq N$.
\end{proof}

By $\thet_n$ we denote the unique solution of the phase equation~\eqref{eq:arg:eq}. Since the left-hand side of \eqref{eq:arg:eq} is an increasing function of $\thet$, we have
\formula{
0<\thet_1 < \thet_2 < \ldots < \thet_N < \pi.
}
Our next result provides bounds for $\thet_n$.

\begin{lemma}
\label{lem:theta:bounds}
If $1 \leq n \leq N$, then
\formula[eq:thet:bounds]{
\max\biggl\{\frac{(n-1)\pi}{N}, \frac{(2n-1)\pi}{2(N+1)}\biggr\} \leq \thet_n \leq \frac{n\pi}{N+1}.
}
In particular, $\thet_n \approx n/N$ and $\pi - \thet_n \approx 1-n/(N+1)$.
\end{lemma}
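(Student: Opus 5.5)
From the phase equation \eqref{eq:arg:eq}, $\thet_n = (n\pi - \eta(\thet_n))/(N+1)$. All three bounds in \eqref{eq:thet:bounds} will follow from the bounds on $\eta$ in Proposition~\ref{lem:theta:plus} and from \eqref{eq:thetas:inequality}.

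\emph{Upper bound.} Since $\eta(\thet_n)\geq 0$, we get $\thet_n \leq n\pi/(N+1)$ at once.

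\emph{First lower bound.} By \eqref{eq:thetas:inequality}, $\eta(\thet)\leq \pi-\thet$ on $(0,\pi)$. Inserting this at $\thet=\thet_n$ gives
\[
n\pi = (N+1)\thet_n + \eta(\thet_n) \leq (N+1)\thet_n + \pi - \thet_n = N\thet_n + \pi,
\]
and therefore $\thet_n \geq (n-1)\pi/N$.

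\emph{Second lower bound.} This uses $\eta(\thet)\leq (2-\alpha)\pi/4 < \pi/2$, which holds because $\alpha>0$. Then $(N+1)\thet_n \geq n\pi - \pi/2$, which gives $\thet_n\geq (2n-1)\pi/(2(N+1))$. Combining the two lower bounds yields the maximum in \eqref{eq:thet:bounds}.

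\emph{Comparability statements.} For $\thet_n\approx n/N$: the upper bound gives $\thet_n\leq \pi n/N$. The bound $(2n-1)\pi/(2(N+1))\geq n\pi/(2(N+1))\geq n\pi/(4N)$ gives the lower comparison.

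For $\pi-\thet_n\approx 1-n/(N+1)$: the upper bound on $\thet_n$ gives $\pi-\thet_n\geq \pi(1-n/(N+1))$. In the other direction, the first lower bound gives
\[
\pi-\thet_n\leq \pi\,\frac{N-n+1}{N}.
\]
Since $N-n+1 = (N+1)(1-n/(N+1))$ and $(N+1)/N\leq 2$, this is at most $2\pi(1-n/(N+1))$.

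The only step requiring care is choosing which bound on $\eta$ serves each inequality. The one-sided bound $\eta\leq\pi-\thet$ is what produces the $(n-1)\pi/N$ estimate, and hence the control of $\pi-\thet_n$ near the top of the spectrum. The bound $\eta\leq(2-\alpha)\pi/4$ gives the $(2n-1)$ estimate, which is what handles small $n$; in particular it keeps $\thet_1$ bounded below, where the first lower bound degenerates to zero.
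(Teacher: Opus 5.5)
Your proof is correct and follows the paper's argument: the upper bound comes from $\eta\geq0$, the bound $(n-1)\pi/N$ from $\eta(\thet)\leq\pi-\thet$, and the bound $(2n-1)\pi/(2(N+1))$ from $\eta\leq\pi/2$, with the comparability statements read off from these. The only difference is cosmetic: your constant in $\pi-\thet_n\leq 2\pi(1-n/(N+1))$ is $2\pi$, whereas the paper records $3\pi/2$, and this does not matter for the $\approx$ claims.
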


\begin{proof}
We apply \eqref{eq:thetas:inequality} and \eqref{eq:theta:plus:bounds} to the phase equation. More precisely, by \eqref{eq:thetas:inequality}, we find that
\formula{
n\pi = (N+1) \thet_n + \eta(\thet_n) \leq N \thet_n + \pi,
}
hence $\thet_n \geq (n-1)\pi/N$. Similarly, using \eqref{eq:theta:plus:bounds}, we obtain
\formula{
(N+1)\thet_n \leq n\pi \leq (N+1) \thet_n + \tfrac{\pi}{2},
}
and the remaining inequalities follow. With \eqref{eq:thet:bounds}, one can verify that
\formula{
& \frac{\pi}{4} \, \frac{n}{N} \leq \thet_n \leq \pi \frac{n}{N} && \text{and} & \pi \biggl( 1-\frac{n}{N+1} \biggr) \leq \pi-\thet_n \leq \frac{3\pi}{2} \biggl( 1- \frac{n}{N+1} \biggr), &
}
which proves the last part of the lemma.
\end{proof}

We next collect some useful estimates.

\begin{lemma}
\label{lem:asymptotics}
    Let $\xi_n \in (0,\pi)$ satisfy $\xi_n \approx n/N$ and $\pi - \xi_n \approx 1-n/(N+1)$. Then
    \formula[eq:sin:cos]{
    & \sin \frac{\xi_n}{2} \approx \frac{n}{N}, & \cos\frac{\xi_n}{2} \approx 1-\frac{n}{N+1}. &
    }
    We also have
    \formula{
    0  < \sin \frac{\eta(\xi_n)}{2} \leq C\biggl( 1-\frac{n}{N+1} \biggr).
    }
    Moreover, for $g$ defined in \eqref{eq:G},
    \formula[eq:G']{
    g'( \xi_n) \approx  \alpha \biggl( 1 - \frac{n}{N+1} \biggr) \biggl( \frac{n}{N}\biggr)^{\alpha-1}.
    }
\end{lemma}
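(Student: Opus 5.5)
The lemma bundles three elementary estimates, and I would prove them in the order stated, using only the hypotheses $\xi_n\approx n/N$ and $\pi-\xi_n\approx 1-n/(N+1)$ together with Proposition~\ref{lem:theta:plus}.

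For \eqref{eq:sin:cos} I will use the standard two-sided bounds on $[0,\pi/2]$:
\[
\frac{2}{\pi}x\leq \sin x\leq x, \qquad x\in[0,\pi/2].
\]
Applying this with $x=\xi_n/2\in(0,\pi/2)$ gives $\sin(\xi_n/2)\approx \xi_n\approx n/N$. For the cosine I write $\cos(\xi_n/2)=\sin\bigl((\pi-\xi_n)/2\bigr)$ and apply the same bounds, obtaining $\cos(\xi_n/2)\approx\pi-\xi_n\approx 1-n/(N+1)$.

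For the bound on $\sin(\eta(\xi_n)/2)$, positivity comes first. The proof of Proposition~\ref{lem:theta:plus} shows $\eta\geq0$, but I need strict positivity. This follows from \eqref{eq:eta}, because the integrand is strictly positive: $p_\alpha(t)\in(0,\pi)$ and $\cos(\xi_n/2)>0$. Hence $0<\eta(\xi_n)\leq(2-\alpha)\pi/4<\pi/2$, so $\eta(\xi_n)/2\in(0,\pi/4)$ and the sine is positive. For the upper bound I use $\sin x\leq x$ together with $\eta(\thet)\leq\pi-\thet$ (from \eqref{eq:thetas:inequality}), which gives
\[
\sin\frac{\eta(\xi_n)}{2}\leq\frac{\pi-\xi_n}{2}\leq C\Bigl(1-\frac{n}{N+1}\Bigr).
\]

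For \eqref{eq:G'} I differentiate $g(\thet)=(2\sin(\thet/2))^\alpha$:
\[
g'(\thet)=\alpha\,(2\sin(\thet/2))^{\alpha-1}\cos(\thet/2).
\]
Substituting the two comparabilities from \eqref{eq:sin:cos}, $\cos(\xi_n/2)\approx 1-n/(N+1)$ and $(2\sin(\xi_n/2))^{\alpha-1}\approx (n/N)^{\alpha-1}$, yields \eqref{eq:G'}.

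The only delicate point is that the exponent $\alpha-1$ may be negative, so the comparability constants for the power $(n/N)^{\alpha-1}$ depend on $\alpha$. Raising the two-sided bound $c_1 n/N\leq 2\sin(\xi_n/2)\leq c_2 n/N$ to the power $\alpha-1\in(-1,1)$ yields constants between $\min(c_1^{\alpha-1},c_2^{\alpha-1})$ and $\max(c_1^{\alpha-1},c_2^{\alpha-1})$. These are bounded uniformly for $\alpha\in(0,2)$ since $|\alpha-1|<1$, so \eqref{eq:G'} holds with constants independent of $\alpha$.
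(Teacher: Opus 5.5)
Your proposal is correct and follows essentially the paper's proof: the same $\sin x\approx x$ argument on $[0,\pi/2]$ with $\cos(\xi_n/2)=\sin((\pi-\xi_n)/2)$, the bound $\eta(\thet)\leq\pi-\thet$ from \eqref{eq:thetas:inequality}, and the explicit formula for $g'$. The only differences are cosmetic. You bound $\sin(\eta(\xi_n)/2)$ by $\eta(\xi_n)/2\leq(\pi-\xi_n)/2$ rather than by $\cos(\xi_n/2)$, and you spell out the strict positivity of $\eta$ and the $\alpha$-uniformity of the constants for the power $\alpha-1$, which the paper leaves implicit.
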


\begin{proof}
    Properties \eqref{eq:sin:cos} follow from $\sin x \approx x$ for $x \in [0,\pi/2]$ and $\sin(\pi/2-x)=\cos x$. By \eqref{eq:thetas:inequality} and \eqref{eq:sin:cos}, we find that
    \formula{
    0 < \sin \frac{\eta(\xi_n)}{2} \leq \cos \frac{\xi_n}{2} \leq C\biggl(1-\frac{n}{N+1}\biggr).
    }
    Finally, we observe that $\smash{g'(\thet) = \alpha \cos (\thet/2) (2\sin (\thet/2))^{\alpha-1}}$ and use \eqref{eq:sin:cos} to get \eqref{eq:G'}.
\end{proof}

We will be particularly interested in the behaviour of the sequence $(w_{\thet_n,k}: k \geq 0)$ that appears in our approximate eigenvectors $\tilde v_n$ constructed in Section \ref{sec:construction}. Using Lemmas~\ref{lem:theta:bounds} and \ref{lem:asymptotics} combined with Proposition \ref{prop:wk}, we deduce that
\formula[eq:w:theta:n]{
|w_{\thet_n, k}| \leq\frac{C}{\alpha^{3/2}}\frac{1}{k^{\alpha+1}} \biggl( \frac{N}{n} \biggr)^{\alpha+1} \biggl( 1- \frac{n}{N+1} \biggr)
}
whenever $k \geq 1$. By \eqref{eq:wk}, we also have for $m \geq 0$,
\formula*[eq:Dw:theta:n]{
|\Delta^m w_{\thet_n, k}| &\leq \frac{C}{\alpha^{3/2}} \, \frac{\cot(\thet_n/2)}{g(\thet_n)} \int_0^1 s^{k-\alpha/2}(1-s)^{\alpha+m} \, ds \\
&\leq 
\frac{C_m}{\alpha^{3/2}} \, \frac{1}{k^{\alpha+m+1}} \biggl( \frac{N}{n} \biggr)^{\alpha+1} \biggl( 1- \frac{n}{N+1} \biggr).
}
Moreover, \eqref{eq:wk:sum:1} implies that
\formula[eq:w:sum:theta:n]{
\sum_{k=0}^\infty w_{\thet_n,k} \leq \frac{C N}{n} \biggl(1-\frac{n}{N+1} \biggr).
}

\section{Proof of the main results}
\label{sec:proofs}

\subsection{Asymptotics of eigenvalues}
\label{sec:proof:thm2}

Let $N \geq 2$. Recall that in \eqref{eq:v:approx} we defined the approximate eigenvectors for $0 \leq k \leq N-1$:
\formula{
\tilde{v}_{n,k} = v_{\thet_n, k} q_{N-1-k} - (-1)^{n} v_{\thet_n, N-1-k}q_k,
}
where $\thet_n$ are the solutions of the phase equation \eqref{eq:arg:eq} and $(q_k)$ is a suitable sequence. We extend $v_{\thet_n,k}$ by zero for $k<0$ and use the same formula to define $\tilde v_{n,k}$ for all $k\in\Z$. As in \cite{kwasnicki2012}, one can take the following quadratic sequence: 
\formula{
q_k = \begin{cases}
    0 &\text{for } k < (N-1)/4, \\
    \frac{8}{(N-1)^2}\bigl(k- \tfrac{N-1}{4}\bigr)^2 &\text{for } (N-1)/4 \leq k < \frac{N-1}{2}, \\
    1-\frac{8}{(N-1)^2}\bigl(k- \tfrac{3(N-1)}{4}\bigr)^2 &\text{for } \frac{N-1}{2} \leq k < \frac{3(N-1)}{4}, \\
    1 &\text{for } k \geq \frac{3(N-1)}{4}.
\end{cases}
}
Clearly, $0 \leq q_k \leq 1$ and $q_k + q_{N-1-k}=1$ for every $k \in \Z$. It can be verified that
\formula[eq:Dq]{
& |\Delta q_k| \leq \frac{C}{N}, & |\Delta^2 q_k| \leq \frac{C}{N^2}. &
}
Since $q_k + q_{N-1-k}=1$, we have
\formula*[eq:v:vn]{
    \tilde{v}_{n,k} - v_{\thet_n, k} &= v_{\thet_n,k} (q_{N-1-k} - 1) - (-1)^{n} v_{\thet_n, N-1-k} q_k \\
    &= -v_{\thet_n, k} q_k - (-1)^{n} v_{\thet_n, N-1-k} q_k \\
    &= - q_k ( v_{\thet_n, k} + (-1)^{n} v_{\thet_n, N-1-k} ) \\
    &= q_k (w_{\thet_n, k} + (-1)^{n} w_{\thet_n, N-1-k} ) - h_k,
}
where we denoted
\formula{
    h_k = q_k \bigl( \sin((k+1) \thet_n + \tfrac{1}{2}\eta(\thet_n)) \ind_{k \geq 0} + (-1)^{n} \sin\bigl((N-k) \thet_n + \tfrac{1}{2}\eta(\thet_n)\bigr) \ind_{k \leq N-1}\bigr).
}
If $k \leq -1$, then $q_k = 0$, hence $h_k=0$. If $0 \leq k \leq N-1$, we also have $h_k = 0$ because of \eqref{eq:sines}. If $k \geq N$, we have $q_k=1$ and $\ind_{k \leq N-1} = 0$. It follows that
\formula{
    h_k = \sin\bigl((k+1) \thet_n + \tfrac{1}{2}\eta(\thet_n)\bigr) \ind_{k \geq N}.
}
Consequently,
    \formula[eq:difference:v]{
    \tilde{v}_{n,k} - v_{\thet_n, k} = q_k \bigl(w_{\thet_n, k} + (-1)^{n} w_{\thet_n, N-1-k} \bigr) -\sin\bigl((k+1) \thet_n + \tfrac{1}{2}\eta(\thet_n)\bigr) \ind_{k \geq N}.
}
If we additionally denote
\formula{
    f_k &= w_{\thet_n, k} + (-1)^{n} w_{\thet_n, N-1-k},\\
    g_k &= f_k q_k,
}
then \eqref{eq:difference:v} reads
\formula[eq:v:tilde]{
    \tilde{v}_{n,k} = v_{\thet_n, k} + g_k - h_k.
}

Note that $\tilde{v}_{n,k} = 0$ for $k < 0$ and $k \geq N$. Because our analysis involves both finite Toeplitz matrices $T_N$ and infinite Toeplitz matrices $T$, the object $\tilde{v}_n$ will be treated interchangeably as a vector in $\mathbb{R}^N$ or as an element of $\mathbb{R}^{\{0,1,2,\ldots\}}$, depending on the operator applied to it.

\begin{lemma}
\label{lem:TN}
    For $1 \leq n \leq N$ we have
    \formula{
    \lV T_N \tilde{v}_n - g(\thet_n) \tilde v_n \rV \leq \frac{C}{(2-\alpha)\alpha^{3/2}} \, \frac{1}{nN^{\alpha-1/2}} \biggl(1-\frac{n}{N+1}\biggr).
    }
\end{lemma}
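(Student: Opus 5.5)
Since $\tilde v_{n,k}=0$ outside $E_N$, we have $(T_N\tilde v_n)_k=(T\tilde v_n)_k$ for $0\leq k\leq N-1$. It therefore suffices to bound $(T\tilde v_n - g(\thet_n)\tilde v_n)_k$ for $0\leq k\leq N-1$. By the symmetry of the construction, $\tilde v_{n,N-1-k}=-(-1)^n\tilde v_{n,k}$, and $T$ commutes with the reflection $k\mapsto N-1-k$ on vectors supported in $E_N$. Hence it is enough to treat $0\leq k\leq N/2$ and then double the resulting $\ell^2$ bound.

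For these $k$ we use the decomposition \eqref{eq:v:tilde}, $\tilde v_n=v_{\thet_n}+g-h$. Since $Tv_{\thet_n}=g(\thet_n)v_{\thet_n}$, and $h_k=0$ for $k\leq N-1$, we obtain
\[
(T\tilde v_n-g(\thet_n)\tilde v_n)_k=(Tg)_k-g(\thet_n)g_k-(Th)_k .
\]
Here $(Th)_k=\sum_{j\geq N}a_{j-k}\sin((j+1)\thet_n+\tfrac12\eta(\thet_n))$. We bound this with Lemma~\ref{lem:T:hk}, taking $\ph=\thet_n$ and $\psi=\eta(\thet_n)/2$. The phase equation gives $(N+\tfrac12)\thet_n+\psi=n\pi-\tfrac12(\thet_n+\eta(\thet_n))$. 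The cosine term is then $|\sin(\tfrac12(\thet_n+\eta(\thet_n)))|$, which is comparable to $\pi-\thet_n-\eta(\thet_n)$ and therefore to at most $1-n/(N+1)$. Moreover $|\sin\ph|\leq C(1-n/(N+1))$, and Lemma~\ref{lem:asymptotics} gives $|\sin\psi|\leq C(1-n/(N+1))$. Combined with $\sin(\thet_n/2)\approx n/N$, this yields $|(Th)_k|\leq C\alpha N^{-\alpha}n^{-1}(1-n/(N+1))$ up to harmless factors. Next, $g_k=q_kf_k$ vanishes for $k<(N-1)/4$, and for $(N-1)/4\leq k\leq N/2$ both indices $k$ and $N-1-k$ are of order $N$. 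Hence \eqref{eq:w:theta:n} gives $|g_k|\leq C\alpha^{-3/2}N^{-\alpha-1}(N/n)^{\alpha+1}(1-n/(N+1))$. Since $g(\thet_n)\approx (n/N)^\alpha$, the term $g(\thet_n)g_k$ is of size $\alpha^{-3/2}n^{-1}N^{-\alpha}(1-n/(N+1))$.

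For $(Tg)_k$ we apply Lemma~\ref{lem:T:g}. We need $M_1=\sum|g_j|$, which by \eqref{eq:w:sum:theta:n} is at most $C(N/n)(1-n/(N+1))$. The sup bound $M_2$ on $[0,N/2]$ was just obtained. For $M_3$ we bound $\Delta^2 g_k$ on $[-N/4,3N/4]$ by the discrete Leibniz rule, combining \eqref{eq:Dq} with \eqref{eq:Dw:theta:n} for $m=0,1,2$. On the support of $q$ within this window, the $w$ indices are of order $N$, so $M_3\leq C\alpha^{-3/2}N^{-\alpha-3}(N/n)^{\alpha+1}(1-n/(N+1))$. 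Substituting, $M_2N$ and $\tfrac{\alpha}{2-\alpha}M_3N^3$ both contribute $C(2-\alpha)^{-1}\alpha^{-3/2}n^{-\alpha-1}N^{\alpha+1}\cdot N^{-\alpha-1}\cdot N^{-\alpha}\cdot\ldots$. We must check carefully that this collapses to $n^{-1}N^{-\alpha}(1-n/(N+1))$ and not something worse.

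\emph{Main obstacle.} A pointwise bound of order $n^{-1}N^{-\alpha}(1-n/(N+1))$ summed over $N$ entries gives the stated $N^{-\alpha+1/2}$ in $\ell^2$. But the naive $M_1$ term yields $\alpha N^{-\alpha-1}\cdot(N/n)$, which is $n^{-1}N^{-\alpha}$: this is fine. The $M_2$ and $M_3$ terms carry $(N/n)^{\alpha+1}$, which is too large for small $n$. I expect the fix is to exploit that $w_{\thet_n,j}$ for $j\gtrsim N$ has the sharper bound $\cot(\thet_n/2)/g(\thet_n)\cdot j^{-\alpha-1}$, and that $g(\thet_n)$ cancels against the explicit factor. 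Alternatively, one can use \eqref{eq:w:sin} to cap $w$ by $1$ and interpolate. I would balance these two bounds to reach the claimed $1/n$ dependence, accepting possibly extra constants in $\alpha$.
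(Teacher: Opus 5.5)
Your route is the paper's own: the splitting $\tilde v_n=v_{\thet_n}+g-h$, Lemma~\ref{lem:T:hk} for $(Th)_k$, Lemma~\ref{lem:T:g} with the same $M_1,M_2,M_3$, then reflection symmetry and summing $N$ squares. The problem is that you do not finish. You end by declaring the $M_2$ and $M_3$ contributions ``too large for small $n$'' and sketch a remedy, but that obstacle does not exist, and the remedy is not meaningful. The bound $\cot(\thet_n/2)/g(\thet_n)\cdot j^{-\alpha-1}$ is Proposition~\ref{prop:wk} itself, i.e.\ exactly the source of \eqref{eq:w:theta:n}. Also, $(Tg)_k$ carries no factor $g(\thet_n)$ to cancel. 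The factor $(N/n)^{\alpha+1}$ in \eqref{eq:w:theta:n} always comes with $j^{-\alpha-1}$, $j\approx N$. So your own estimates read $M_2=CA$ and $M_3=CA/N^2$ with $A=\alpha^{-3/2}n^{-\alpha-1}(1-\frac{n}{N+1})$, with no leftover power of $N$. Lemma~\ref{lem:T:g} then gives
\[
\frac{C}{N^{\alpha+1}}\Bigl(M_2N+\frac{\alpha}{2-\alpha}M_3N^3\Bigr)\leq\frac{C}{2-\alpha}\,\frac{A}{N^\alpha}=\frac{C}{(2-\alpha)\alpha^{3/2}}\,\frac{1}{n^\alpha}\,\frac{1}{nN^\alpha}\Bigl(1-\frac{n}{N+1}\Bigr).
\]
This beats the target by the factor $n^{-\alpha}\leq1$; your own displayed product $n^{-\alpha-1}N^{\alpha+1}\cdot N^{-\alpha-1}\cdot N^{-\alpha}$ already says this. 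So nothing needs to be balanced or interpolated. To finish, add the $M_1$ term $C\alpha\,n^{-1}N^{-\alpha}(1-\frac{n}{N+1})$, the $(Th)_k$ term, and $g(\thet_n)|g_k|\leq C\alpha^{-3/2}n^{-1}N^{-\alpha}(1-\frac{n}{N+1})$. Absorbing $\alpha$ and $\alpha^{-3/2}$ into $C/((2-\alpha)\alpha^{3/2})$ gives the pointwise bound, and summing its square over $N$ entries yields the lemma.

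There are two smaller slips in the $(Th)_k$ step. First, the phase equation turns the cosine term into $|\cos(\frac12(\thet_n+\eta(\thet_n)))|$, not $|\sin(\cdot)|$; your comparability with $\pi-\thet_n-\eta(\thet_n)$ is correct for the cosine. Second, $|\sin\thet_n|\leq C(1-\frac{n}{N+1})$ is too weak once divided by $\sin^2(\thet_n/2)\approx(n/N)^2$, since it loses a factor $N/n$. You need $\sin\thet_n=2\sin(\thet_n/2)\cos(\thet_n/2)\leq C\frac nN(1-\frac{n}{N+1})$, from Lemma~\ref{lem:asymptotics}.

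Finally, ``both indices are of order $N$'' fails for small $N$. For $N=2$ the index $N-1-k=0$ lies outside the range of \eqref{eq:w:theta:n}. The window bound on $\Delta^2g_k$ also needs $N-k-3\gtrsim N$. The paper treats $N=2$ and $N<32$ separately, using \eqref{eq:w:sum:theta:n} and the boundedness of $\alpha^{3/2}n^\alpha N^3$ there.
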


\begin{proof}
    By \eqref{eq:v:tilde}, we find that
    \formula{
    |(T \tilde{v}_{n})_k - g(\thet_n) \tilde v_{n,k}| \leq |(T v_{\thet_n})_k - g(\thet_n) v_{\thet_n,k}| + |(T g)_k| + |(T h)_k| + g(\thet_n) (|g_k| + |h_k|).
    }
    Since $Tv_{\thet_n}=g(\thet_n)v_{\thet_n}$, the first term vanishes. Furthermore, $h_k$ vanishes for $k<N$. We assume that $k \leq (N-1)/2$ and inspect the remaining terms one by one. For this purpose, we divide the rest of the proof into four steps.

\textit{Step 1.} We estimate $|(Tg)_k|$ by verifying the assumptions of Lemma~\ref{lem:T:g}. Set
\formula{
A=\frac{1}{\alpha^{3/2}n^{\alpha+1}}\biggl(1-\frac{n}{N+1}\biggr).
}
Since $q_k=0$ for $k<0$, we also have $g_k=0$ there. By the definition of $(q_k)$ and \eqref{eq:w:sum:theta:n},
\formula{
\sum_{k\in\Z}|g_k|\leq\sum_{k\geq(N-1)/4}\bigl(w_{\thet_n,k}+w_{\thet_n,N-1-k}\bigr)\leq2\sum_{k=0}^\infty w_{\thet_n,k}\leq\frac{CN}{n}\biggl(1-\frac{n}{N+1}\biggr).
}

For $N\geq3$, monotonicity and \eqref{eq:w:theta:n} give
\formula[eq:f:bounds]{
\max_{0\leq k\leq N/2}|g_k|\leq w_{\thet_n,\lceil(N-1)/4\rceil}+w_{\thet_n,\lceil N/2\rceil-1}\leq CA.
}
For $N=2$, we have $g_0=0$ and $g_1=f_1$, so the same bound follows from
\formula{
|g_1|\leq w_{\thet_n,1}+w_{\thet_n,0}\leq\frac{C}{n}\biggl(1-\frac{n}{N+1}\biggr)\leq CA,
}
where we used \eqref{eq:w:sum:theta:n} and the boundedness of $\alpha^{3/2}n^\alpha$ for $n\leq2$.

We next estimate the second differences. The discrete product rule yields
\formula[eq:Dg2]{
|\Delta^2g_k|\leq|f_{k+2}|\,|\Delta^2q_k|+|\Delta q_k|\bigl(|\Delta f_{k+1}|+|\Delta f_k|\bigr)+|q_{k+1}|\,|\Delta^2f_k|.
}
First assume that $N\geq32$. If $k<(N-1)/4-2$, then $\Delta^2q_k=\Delta q_k=q_{k+1}=0$, and hence $\Delta^2g_k=0$. Hence, we can consider $(N-1)/4-2\leq k\leq3N/4$. For $m=0,1,2$, we have
\formula{
\Delta^m f_k=\Delta^m w_{\thet_n,k}+(-1)^{n+m}\Delta^m w_{\thet_n,N-k-m-1}.
}
By \eqref{eq:Dw:theta:n}, we obtain
\formula{
|\Delta^m f_k|\leq C_m AN^{\alpha+1}\biggl(\frac{1}{k^{\alpha+m+1}}+\frac{1}{(N-k-m-1)^{\alpha+m+1}}\biggr)
}
for $1 \leq k \leq N-m-2$. The smallest indices needed when applying this estimate to the terms in \eqref{eq:Dg2} are $k$ and $N-k-3$. Under our assumptions,
\formula{
& k\geq\frac{N-1}{4}-2\geq\frac N8, & N-k-3\geq\frac N4-3\geq\frac N8. &
}
Consequently,
\formula{
|f_{k+2}|\leq CA,\qquad |\Delta f_k|+|\Delta f_{k+1}|\leq\frac{CA}{N},\qquad |\Delta^2f_k|\leq\frac{CA}{N^2}.
}
Together with \eqref{eq:Dg2} and \eqref{eq:Dq}, this implies that
\formula{
\max_{-N/4\leq k\leq3N/4}|\Delta^2g_k|\leq\frac{CA}{N^2}.
}
For $2\leq N<32$, the same bound follows from
\formula{
\sup_{k\in\Z}|\Delta^2g_k|\leq4\sup_{k\in\Z}|f_k|\leq8\sum_{k\geq0}w_{\thet_n,k}\leq\frac{CN}{n}\biggl(1-\frac{n}{N+1}\biggr)\leq\frac{CA}{N^2},
}
since $\alpha^{3/2}n^\alpha N^3$ is bounded in this range.

    Thus, Lemma~\ref{lem:T:g} applies with
    \formula{
    & M_1=\frac{CN}{n}\biggl(1-\frac{n}{N+1}\biggr), & M_2=CA, && M_3=\frac{CA}{N^2}. &
    }
    For $0\leq k\leq(N-1)/2$, it follows that
    \formula{
    |(Tg)_k|\leq\frac{C}{N^{\alpha+1}}\biggl(\alpha M_1+\frac{AN}{2-\alpha}\biggr)\leq\frac{C}{nN^\alpha}\biggl(\alpha+\frac{1}{(2-\alpha)\alpha^{3/2}n^\alpha}\biggr)\biggl(1-\frac{n}{N+1}\biggr).
    }

    \textit{Step 2.} To estimate $|(Th)_k|$, we apply Lemma \ref{lem:T:hk} with $\ph = \thet_n$ and $\psi = \eta(\thet_n)/2$:
    \formula{
    \lv (Th)_k \rv \leq \frac{C \alpha }{N^{\alpha+1}\sin^2(\thet_n/2)}\biggl(\sin\frac{\thet_n}{2}\,\bigl|\cos\bigl((N+\tfrac12)\thet_n+\tfrac{1}{2} \eta(\thet_n)\bigr)\bigr|+|\sin\thet_n|+\frac{|\sin(\eta(\thet_n)/2)|}{N}\biggr).
    }
    By the phase equation \eqref{eq:arg:eq} and $0\leq\eta(\thet_n)\leq\pi-\thet_n$, we have
    \formula{
    \bigl\lv \cos\bigl((N+\tfrac12)\thet_n+\tfrac12\eta(\thet_n)\bigr)\bigr\rv = \cos\frac{\thet_n+\eta(\thet_n)}{2}\leq\cos\frac{\thet_n}{2}.
    }
    Using Lemma~\ref{lem:asymptotics} and the identity $\sin\thet_n = 2\sin(\thet_n/2) \cos(\thet_n/2)$, we obtain
    \formula{
    \lv (Th)_k \rv \leq \frac{C \alpha }{N^{\alpha+1}}\frac{(n/N)(1-n/(N+1))}{n^2/N^2} = \frac{C \alpha }{nN^\alpha}\biggl(1-\frac{n}{N+1}\biggr).
    }

    \textit{Step 3.} Now we will estimate the remaining term, which is $g(\thet_n)|g_k|$. If $k < (N-1)/4$, we have $g_k = 0$. If $(N-1)/4 \leq k \leq (N-1)/2$, then by \eqref{eq:f:bounds}, the definition \eqref{eq:G} of $g$, and Lemma~\ref{lem:theta:bounds}, we have
    \formula{
    g(\thet_n) \lv g_k \rv \leq g(\thet_n) \max_{0 \leq k \leq N/2} \lv g_k \rv \leq CA\biggl(\frac nN\biggr)^\alpha = \frac{C}{\alpha^{3/2}}\,\frac{1}{nN^\alpha}\biggl(1-\frac{n}{N+1}\biggr).
    }
    
    \textit{Step 4.} Combining the previous estimates and using $n^{-\alpha}\leq1$, we obtain
    \formula{
    |(T\tilde v_n)_k-g(\thet_n)\tilde v_{n,k}|\leq\frac{C}{(2-\alpha)\alpha^{3/2}}\,\frac{1}{nN^\alpha}\biggl(1-\frac{n}{N+1}\biggr)
    }
    for $0\leq k\leq(N-1)/2$. Since $\tilde v_n$ vanishes outside $\{0,\ldots,N-1\}$, we have $(T\tilde v_n)_k=(T_N\tilde v_n)_k$ for $0\leq k\leq N-1$. Since $a_j=a_{-j}$, the matrix $T_N$ commutes with the reflection $k\mapsto N-1-k$, and the estimate holds for every $0\leq k\leq N-1$. Consequently,
    \formula{
    \lV T_N \tilde{v}_{n} - g(\thet_n) \tilde v_{n} \rV^2 &\leq \sum_{k=0}^{N-1} \frac{C}{(2-\alpha)^2\alpha^3} \, \frac{1}{n^2N^{2\alpha}} \biggl( 1-\frac{n}{N+1}\biggr)^2 \\
    &= \frac{C}{(2-\alpha)^2\alpha^3} \, \frac{1}{n^2N^{2\alpha-1}} \biggl( 1-\frac{n}{N+1}\biggr)^2.
    }
    Taking square roots completes the proof.
\end{proof}

\begin{lemma}
\label{lem:v:norm}
    For $1 \leq n \leq N$ we have
    \formula{
    \bigg\lvert\lVert \tilde v_n \rVert^2 - \frac{N}{2} \bigg\rvert \leq \frac{C N}{n}. 
    }
    In particular, there is a constant $C>0$ such that $\lVert \tilde v_n \rVert^2 > N/4$ for all $C \leq n \leq N$.
\end{lemma}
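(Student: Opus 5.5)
We use the decomposition \eqref{eq:v:tilde}: for $0\leq k\leq N-1$ we have $h_k=0$, so $\tilde v_{n,k}=v_{\thet_n,k}+g_k$. By the reflection symmetry $\tilde v_{n,N-1-k}=-(-1)^n\tilde v_{n,k}$, which follows directly from \eqref{eq:v:approx}, it suffices to compute $\sum_{k\leq (N-1)/2}\tilde v_{n,k}^2$ and double it, taking care of the middle index when $N$ is odd. On $0\leq k\leq N-1$ we write $\tilde v_{n,k}=s_k-r_k$. Here $s_k=\sin((k+1)\thet_n+\tfrac12\eta(\thet_n))$ is the main oscillatory part; it is a genuine identity for all $k$ because of \eqref{eq:sines} and $q_k+q_{N-1-k}=1$. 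The remainder is $r_k=q_{N-1-k}w_{\thet_n,k}-(-1)^n q_k w_{\thet_n,N-1-k}$. Then
\formula{
\lV\tilde v_n\rV^2=\sum_{k=0}^{N-1}s_k^2-2\sum_{k=0}^{N-1}s_kr_k+\sum_{k=0}^{N-1}r_k^2,
}
and we treat the three terms separately.

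\emph{Main term.} We have $\sum_{k=0}^{N-1}s_k^2=\tfrac N2-\tfrac12\sum_{k=0}^{N-1}\cos(2(k+1)\thet_n+\eta(\thet_n))$. The geometric sum of cosines is bounded by $1/|\sin\thet_n|$. By Lemma~\ref{lem:asymptotics} and Lemma~\ref{lem:theta:bounds} this is $\leq C(N/n)(1-n/(N+1))^{-1}$, which is too weak near $n=N$. We therefore use the explicit value instead: the sum equals $\sin(N\thet_n)\cos((N+1)\thet_n+\eta(\thet_n))/\sin\thet_n$. By the phase equation, $\cos((N+1)\thet_n+\eta(\thet_n))=(-1)^n$, and $|\sin(N\thet_n)|=|\sin(n\pi-\thet_n-\eta(\thet_n))|=\sin(\thet_n+\eta(\thet_n))$. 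Using \eqref{eq:thetas:inequality}, $\sin(\thet_n+\eta(\thet_n))\leq\sin\thet_n$ whenever $\thet_n+\eta(\thet_n)\geq\pi-\thet_n$. In the complementary regime, Proposition~\ref{lem:theta:plus} gives $\thet_n+\eta\leq\thet_n+\pi/2$, which forces $\thet_n$ to be bounded away from $\pi$, so $1/\sin\thet_n\leq CN/n$. In either case the error is $\leq CN/n$ (indeed $O(1)$ near the top).

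\emph{Cross and quadratic terms.} We use $0\leq q\leq1$, \eqref{eq:w:sin} ($w_{\thet_n,k}\leq1$) and \eqref{eq:w:sum:theta:n}. These give $\sum_k|r_k|\leq2\sum_{k\geq0}w_{\thet_n,k}\leq CN/n$. Since $|s_k|\leq1$, the cross term is $\leq CN/n$. Likewise $\sum_kr_k^2\leq\sup|r_k|\sum|r_k|\leq 2\cdot CN/n$, using $w\leq1$. Combining the three estimates gives $|\lV\tilde v_n\rV^2-N/2|\leq CN/n$. Choosing $n\geq 4C$ then yields $\lV\tilde v_n\rV^2\geq N/2-N/4=N/4$, which is the final claim, with a possibly enlarged constant to obtain the strict inequality.

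The only delicate step is the oscillatory main-term sum, because the naive bound $1/\sin\thet_n$ blows up near $\thet_n\to\pi$. The planned remedy is to exploit the phase equation exactly, as above; if the case analysis turns out awkward, we can instead fall back on the crude bound and note that for $n$ near $N$ the factor $1/\sin\thet_n\leq CN/(N+1-n)$ still contributes $O(N)$ only when $N+1-n=O(1)$. That would be insufficient, so the exact phase identity is what we will rely on.
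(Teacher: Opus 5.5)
Your proof is correct and follows the paper's argument essentially step for step. You use the same splitting $\tilde v_{n,k}=\sin\ph_k-r_k$, the same closed form of the cosine sum evaluated through the phase equation, and the same bounds $w_{\thet_n,k}\leq 1$ and \eqref{eq:w:sum:theta:n} for the cross and quadratic terms. The only difference is cosmetic. Instead of your case split, the paper uses $\sin(\thet_n+\eta(\thet_n))=2\sin\frac{\thet_n+\eta(\thet_n)}{2}\cos\frac{\thet_n+\eta(\thet_n)}{2}\leq 2\cos\frac{\thet_n}{2}$, which bounds the oscillatory term by $1/(2\sin(\thet_n/2))$ in one step. Also, in your complementary case, $\thet_n<\pi/2$ follows directly from the case hypothesis $\thet_n+\eta(\thet_n)<\pi-\thet_n$, not from $\eta\leq\pi/2$ as you wrote.
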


\begin{proof}
    Let
    \formula{
    &\ph_k = (k+1)\thet_n + \tfrac{1}{2}\eta(\thet_n), & r_k = w_{\thet_n,k} q_{N-1-k} - (-1)^n w_{\thet_n, N-1-k} q_k.&
    }
    Similarly to \eqref{eq:v:vn}, one can show that for $0 \leq k \leq N-1$,
    \formula{
    \tilde v_{n,k} = \sin \ph_k - r_k.
    }
    Observe that
    \formula{
    \bigg\lvert\lVert \tilde v_n \rVert^2 - \frac{N}{2} \bigg\rvert &= \biggl\lvert \sum_{k=0}^{N-1} (\tilde v_{n,k}^2  - \tfrac{1}{2}) \biggr\rvert \\ &\leq \biggl\lvert \sum_{k=0}^{N-1} (\sin^2 \ph_k - \tfrac{1}{2}) \biggr\rvert + 2 \biggl\lvert \sum_{k=0}^{N-1} r_k \sin \ph_k \biggr\rvert + \sum_{k=0}^{N-1} r_k^2.
    }
    The first component above gives
    \formula{
    \biggl\lvert \sum_{k=0}^{N-1} (\sin^2 \ph_k - \tfrac{1}{2}) \biggr\rvert &= \frac{1}{2} \biggl\lvert \sum_{k=0}^{N-1} \cos (2\ph_k) \biggr\rvert = \frac{1}{2} \biggl\lvert \sum_{k=0}^{N-1} \cos (2(k+1)\thet_n + \eta(\thet_n)) \biggr\rvert \\
    &= \frac{1}{2} \biggl\lvert \frac{\sin(N\thet_n) \cos((N+1) \thet_n + \eta(\thet_n))}{\sin \thet_n} \biggr\rvert \leq \frac{\lvert\sin (N\thet_n)\rvert}{4\sin (\thet_n/2)\cos (\thet_n/2)} .
    }
    By the phase equation and \eqref{eq:thetas:inequality}, we get 
    \formula{
    \lvert\sin(N\thet_n)\rvert &= \lvert\sin(n\pi-\thet_n-\eta(\thet_n))\rvert = \sin(\thet_n + \eta(\thet_n)) \\ &= 2 \sin \biggl( \frac{\thet_n}{2} + \frac{\eta(\thet_n)}{2}\biggr) \cos \biggl( \frac{\thet_n}{2} + \frac{\eta(\thet_n)}{2}\biggr) \leq 2 \cos \frac{\thet_n}{2}.
    }
    Using Lemma \ref{lem:asymptotics}, we find that
    \formula{
    \biggl\lvert \sum_{k=0}^{N-1} (\sin^2 \ph_k - \tfrac{1}{2}) \biggr\rvert \leq \frac{1}{2 \sin (\thet_n/2)} \leq \frac{C N}{n}.
    }   
    Next, observe that \eqref{eq:w:sin} implies that $0 \leq w_{\thet_n,k} \leq 1$. Hence, $|r_k| \leq 1$. Using this and \eqref{eq:w:sum:theta:n}, we obtain
    \formula{
    2 \biggl\lvert \sum_{k=0}^{N-1} r_k \sin \ph_k \biggr\rvert + \sum_{k=0}^{N-1} r_k^2 \leq 3 \sum_{k=0}^{N-1} |r_k| \leq 3 \sum_{k=0}^{N-1} (w_{\thet_n,k} + w_{\thet_n, N-1-k}) \leq 6 \sum_{k=0}^\infty w_{\thet_n, k} \leq \frac{CN}{n}.
    }
    The assertion of the lemma now follows. 
\end{proof}

Now we are ready to prove Theorem \ref{thm:main}.

\begin{proof}[Proof of Theorem \ref{thm:main}]
The first part of Theorem \ref{thm:main} follows from Proposition \ref{prop:solutions}. Below we provide the proof of the second part.

Recall that by $\lambda_1, \lambda_2, \ldots, \lambda_N$ we denoted the eigenvalues of $T_N$. By $\smash{u_j}$ we denote the corresponding eigenvectors of unit norm. By Theorem \ref{thm:main:2}, we know that $\lambda_j$ are all simple. Therefore, the eigenvectors $u_j$ form an orthonormal basis of $\R^N$. It follows that $\smash{\tilde v_n = \sum_{j=1}^N c_{n,j} u_j}$ for some $c_{n,j} \in \R$, where $\tilde v_n$ are the approximate eigenvectors defined in \eqref{eq:v:approx}. Moreover, $\smash{\lVert \tilde{v}_n \rVert^2 = \sum_{j=1}^N c_{n,j}^2}$ and $\smash{T_N \tilde v_n = \sum_{j=1}^N \lambda_j c_{n,j} u_j}$. Let $\lambda_{j(n)}$ be the eigenvalue nearest to $g(\thet_n)$. Then
\formula{
\lVert T_N \tilde v_n - g(\thet_n) \tilde v_n \rVert^2 &= \sum_{j=1}^N (\lambda_j - g(\thet_n))^2 c_{n,j}^2 \\ &\geq (\lambda_{j(n)} - g(\thet_n))^2 \sum_{j=1}^N c_{n,j}^2 = (\lambda_{j(n)} - g(\thet_n))^2 \lVert \tilde v_n \rVert^2.
}

Assume that $C_1 \leq  n \leq N$, where $C_1 \geq 2$ is the constant from Lemma \ref{lem:v:norm}. By Lemmas~\ref{lem:TN} and \ref{lem:v:norm}, there is a constant $C_2 >0$ such that
\formula[eq:l-G]{
|\lambda_{j(n)} - g(\thet_n)| \leq \frac{\lV T_N \tilde v_n - g(\thet_n) \tilde v_n \rV}{\lV \tilde v_n \rV} \leq \frac{C_2}{(2-\alpha) \alpha^{3/2}}\, \frac{1}{nN^{\alpha}} \biggl( 1 - \frac{n}{N+1} \biggr)
}
We claim that $\lambda_{j(n)}=\lambda_n$ for sufficiently large $n$. For $1 \leq n \leq N+1$, we define
\formula{
& L_n = \frac{(2n-1)\pi}{2(N+1)}, & L_n^* = \frac{(n-1)\pi}{N}, && U_n = \frac{n\pi}{N+1},& 
}
and for $1 \leq n \leq N$,
\formula{
M_n = \frac{U_n+L_{n+1}}{2} = \frac{(4n+1)\pi}{4(N+1)}.
}
Moreover, let
\formula{
K_\alpha = \biggl( \frac{C_2 C_3}{(2-\alpha)\alpha^{5/2}} \biggr)^{1/\alpha},
}
where $C_3>0$ is a sufficiently large constant such that $C_3 \geq C_1^{\alpha} C_2^{-1} (2-\alpha) \alpha^{5/2}$. Then, we have $K_\alpha \geq C_1$. We claim that $\lambda_{j(n)} \in (g(M_{n-1}), g(M_n))$ for every $n > K_\alpha$.

By the mean value theorem, there is $\xi_n \in (M_{n-1}, \thet_n)$ such that
\formula{
|g(\thet_n) - g(M_{n-1})| = |g'(\xi_n)| (\thet_n - M_{n-1}).
}
By \eqref{eq:thet:bounds}, we have $U_{n-1} \leq M_{n-1} \leq \xi_n \leq \thet_n \leq U_n \leq M_n$. In particular, $\xi_n \in (M_{n-1}, M_n)$. Note that for $2 \leq n \leq N$, we have  $\xi_n \approx n/N$ and $\pi - \xi_n \approx 1-n/(N+1)$. More precisely,
\formula{
& \frac{\pi}{4}\frac nN \leq \xi_n\leq \frac{9\pi}{8}\frac nN, & \frac{3\pi}{4}\biggl( 1 - \frac{n}{N+1}\biggr) \leq \pi-\xi_n\leq \frac{7\pi}{4}\biggl( 1 - \frac{n}{N+1}\biggr). &
}
Using Lemma \ref{lem:asymptotics}, the assumption that $n > K_\alpha$, and \eqref{eq:l-G}, we find that
\formula{
|g(\thet_n) - g(M_{n-1})| &\geq |g'(\xi_n)| (L_n - M_{n-1}) \\
&= \tfrac{1}{2} |g'(\xi_n)| (L_n - U_{n-1}) \\
&\geq  C \alpha  \biggl( 1-\frac{n}{N+1} \biggr) \biggl(\frac{n}{N}\biggr)^{\alpha-1} \frac{\pi}{2(N+1)} \\
&\geq \frac{\alpha n^\alpha}{C_3 n N^\alpha} \biggl( 1-\frac{n}{N+1} \biggr) \\
&\geq\frac{C_2}{(2-\alpha) \alpha^{3/2}}\, \frac{1}{nN^{\alpha}} \biggl( 1 - \frac{n}{N+1} \biggr) \geq |\lambda_{j(n)} - g(\thet_n)|,
}
whenever $C_3$ is large. An analogous argument shows that
\formula{
|g(\thet_n) - g(M_n)| \geq |\lambda_{j(n)} - g(\thet_n)|
}
for sufficiently large $C_3$. It follows that $\lambda_{j(n)} \in (g(M_{n-1}), g(M_{n}))$ for $n > K_\alpha$.

Clearly, the intervals $(g(M_{n-1}), g(M_n))$ are mutually disjoint. By Theorem \ref{thm:main:2}, we have
\formula{
\lambda_1 < \lambda_2 < \ldots < \lambda_{\lfloor K_\alpha \rfloor} \leq g(U_{\lfloor K_\alpha \rfloor}) < g(M_{\lfloor K_\alpha \rfloor}).
}
Therefore, there are exactly $\lfloor K_\alpha \rfloor$ eigenvalues not included in the above class. It follows that $j(n)=n$ for all $n > K_\alpha$. This completes the second part of Theorem \ref{thm:main} when $K_\alpha < n \leq N$. Clearly, if $N\leq \floor{K_\alpha}$, there are no indices $n>K_\alpha$, and we proceed directly to the case $n\leq K_\alpha$ below.

Now, let us assume that $2 \leq n \leq K_\alpha$. By Theorem \ref{thm:main:2} and \eqref{eq:thet:bounds}, both $\lambda_n$ and $g(\thet_n)$ belong to $[g(L_n^*), g(U_n)]$. Again by the mean value theorem, we deduce that there is a $\xi_n \in (L_n^*, U_n)$, such that
\formula[eq:smalln]{
|\lambda_n - g(\thet_n)| &\leq g(U_n)-g(L^*_n) = g'(\xi_n) (U_n - L_n^*).
}
Since
\formula[eq:lem:warunki:2]{
& \frac \pi2 \, \frac{n}{N} \leq \xi_n \leq \pi \, \frac{n}{N}, & \pi \biggl( 1- \frac{n}{N+1} \biggr) \leq \pi - \xi_n \leq 2\pi \biggl( 1- \frac{n}{N+1} \biggr),
}
we have $\xi_n \approx n/N$ and $\pi-\xi_n \approx 1-n/(N+1)$. Using Lemma \ref{lem:asymptotics} with \eqref{eq:smalln} yields
\formula{
|\lambda_n - g(\thet_n)| &\leq C \alpha  \biggl( 1-\frac{n}{N+1} \biggr) \biggl(\frac{n}{N}\biggr)^{\alpha-1} \frac{\pi}{N} \biggl(1 -\frac{n}{N+1}\biggr) \\
&\leq \frac{C_4 \alpha n^\alpha }{nN^{\alpha}} \biggl( 1 - \frac{n}{N+1} \biggr)^2 \\
&\leq \frac{C_2 C_3 C_4}{(2-\alpha)\alpha^{3/2}}\, \frac{1}{n N^\alpha} \biggl( 1-\frac{n}{N+1}\biggr)
}
for some $C_4>0$. In the last line we used our assumption that $n \leq K_\alpha$.

For completeness, we will prove the theorem for $n=1$. By Theorem \ref{thm:main:2} and \eqref{eq:thet:bounds}, both $\lambda_1$ and $g(\thet_1)$ belong to $[0, g(U_1)]$. It follows that
\formula{
|\lambda_1 - g(\thet_1)| \leq g(U_1) \leq \frac{C}{N^\alpha} \leq \frac{C}{(2-\alpha)\alpha^{3/2}} \, \frac{1}{N^\alpha} \biggl( 1-\frac{1}{N+1}\biggr),
}
since $1-1/(N+1)\geq1/2$ and $(2-\alpha)\alpha^{3/2}$ is bounded for $\alpha\in(0,2)$. This completes the proof of the second part of Theorem \ref{thm:main}.
\end{proof}

\subsection{Approximation formulas}

In this subsection, we prove Theorems \ref{thm:main:4} and \ref{thm:main:3}.

\begin{proof}[Proof of Theorem~\ref{thm:main:4}]
    Following the notation from the proof of Theorem \ref{thm:main}, let us denote $U_n = n\pi/(N+1)$. Since $\thet_n$ is the solution of the phase equation \eqref{eq:arg:eq}, we have $U_n - \thet_n = \eta(\thet_n)/(N+1)$. By \eqref{eq:thet:bounds} and Proposition~\ref{lem:theta:plus}, we have $\eta(\thet_n)>0$, and hence $\theta_n<U_n$ for every $1 \leq n \leq N$. By the mean value theorem, there is $\xi_n \in (\thet_n, U_n)$ satisfying
    \formula{
    g(\thet_n) &= g(U_n) - g'(\xi_n)(U_n - \thet_n) \\
    &= g(U_n) - g'(U_n)(U_n - \thet_n) + (g'(U_n) - g'(\xi_n))(U_n - \thet_n).
    }
    Applying the mean value theorem once again, this time to $\eta$, we find that
    \formula{
    \eta(\thet_n) = \eta(U_n) - \eta'(\zeta_n)(U_n - \thet_n)
    }
    for some $\zeta_n \in (\thet_n, U_n)$. Thus,
    \formula[eq:i0]{
    g(\thet_n) = g(U_n) - \frac{g'(U_n) \eta(U_n)}{N+1} + E_3(n,N),
    }
    where
    \formula{
    E_3(n,N) = \frac{g'(U_n)\eta'(\zeta_n)(U_n-\thet_n)}{N+1} + (g'(U_n) - g'(\xi_n))(U_n - \thet_n).
    }
    Recall that $g$ is increasing and $\eta$ is decreasing, hence we estimate
    \formula[eq:i1]{
    |E_3(n,N)| \leq \biggl( \frac{g'(U_n)|\eta'(\zeta_n)|}{N+1} + |g'(U_n) - g'(\xi_n)| \biggr) (U_n - \thet_n)
    }

    Since $U_n \approx n/N$ and $\pi - U_n \approx 1-n/(N+1)$, by Lemma \ref{lem:asymptotics}, we have $g'(U_n) \approx \alpha(n/N)^{\alpha-1}(1-n/(N+1))$. By Proposition \ref{lem:theta:plus}, we also have $|\eta'(\zeta_n)| \leq (2-\alpha)/2$. To estimate the second term in \eqref{eq:i1}, we use the mean value theorem yet again. There is $\sigma_n \in (\xi_n , U_n)$ such that
    \formula{
    |g'(U_n) - g'(\xi_n)| = |g''(\sigma_n)| (U_n - \xi_n).
    }
    Direct differentiation gives $g''(\sigma_n) = \alpha (2\sin(\sigma_n/2))^{\alpha-2}(\alpha \cos^2 (\sigma_n/2)-1)$. Hence, $|g''(\sigma_n)| \leq C \alpha  (\sin \frac {\sigma_n}{2})^{\alpha-2}$. Recall that $\thet_n \leq \xi_n \leq \sigma_n \leq U_n$, which implies that $\sigma_n \approx n/N$. Consequently, $|g''(\sigma_n)| \leq C \alpha  ( n/N )^{\alpha-2}$. Moreover, by \eqref{eq:thet:bounds},
    \formula{
    U_n - \xi_n \leq U_n - \thet_n \leq U_n - \frac{(2n-1)\pi}{2(N+1)} = \frac{\pi}{2(N+1)} \leq \frac{C}{N}.
    }
    Using the above estimates, we arrive at
    \formula[eq:i5]{
    |E_3(n,N)| \leq C \alpha  \biggl( \frac{n^{\alpha-1}}{N^{\alpha+1}}\biggl(1- \frac{n}{N+1}\biggr) + \frac{n^{\alpha-2}}{N^{\alpha}} \biggr) \leq C \alpha  \frac{n^{\alpha-2}}{N^\alpha}.
    }
    Combining \eqref{eq:i0} and \eqref{eq:i5} with Theorem \ref{thm:main} yields
    \formula{
    \lambda_n = g(U_n) - \frac{g'(U_n) \eta(U_n)}{N+1} + E_1(n,N) + E_3(n,N),
    }
    where $E_1(n,N)$ is defined in Theorem \ref{thm:main} and $E_3(n,N) = \mathscr{O}(\alpha n^{\alpha-2} N^{-\alpha})$.
\end{proof}

\begin{proof}[Proof of Theorem \ref{thm:main:3}]
    
We start with the first part of the theorem. Assume that $n/N \leq 1/2$. By \eqref{eq:thet:bounds}, we have $\thet_n \leq \pi/2$. Proposition \ref{prop:asymptotics:small:mu} allows us to rewrite the phase equation as follows:
\formula{
(N+1)\thet_n + \frac{(2-\alpha)\pi}{4} - \frac{2-\alpha}{2} \thet_n + E_0(\thet_n) = n\pi,
}
where $E_0$ is defined in \eqref{eq:delta}.
Solving this for $\thet_n$ yields
\formula{
\thet_n = \frac{(4n - 2 +\alpha)\pi - 4E_0(\thet_n)}{4N+2\alpha}.
}
By the mean value theorem, there is $\xi_n \in (\thet_n, (4n - 2 +\alpha)\pi/(4N+2\alpha))$ such that
\formula{
g(\thet_n) = g\biggl(\frac{(4n - 2 +\alpha)\pi}{4N+2\alpha}\biggr) - \frac{4 g'(\xi_n) E_0(\thet_n)}{4N+2\alpha}.
}
We denote the error component above as $E_4(n,N)$. As before, we have $\xi_n \approx n/N$ and $\pi - \xi_n \approx 1-n/(N+1)$. Using Lemma \ref{lem:asymptotics} together with the fact that $g'>0$ on $(0, \pi)$, we estimate
\formula[eq:Delta]{
 0< E_4(n,N)  &=  \frac{4g'(\xi_n) E_0(\thet_n)}{4N+2\alpha} \leq C \alpha  \biggl(1-\frac{n}{N+1}\biggr) \, \frac{n^{\alpha-1}}{N^{\alpha}} \, E_0\biggl(\thet_n\biggr).
}
The required bounds for $E_4(n,N)$ now follow from \eqref{eq:delta} and $\thet_n\approx n/N$. For $\alpha=1$, we also use $\thet_n^2\log(e/\thet_n)\leq C(n/N)^2\log(N/n)$, since $n/N\leq1/2$.
With \eqref{eq:Delta} it is easy to verify that $E_4(n,N)$ is of the desired form.

The proof of the second part of the theorem is analogous. For $n>N/2$, \eqref{eq:thet:bounds} gives $\thet_n\geq\pi/4$, so Proposition~\ref{prop:eta:big} applies. It follows that
\formula{
g(\thet_n) = g\biggl( \frac{(n-C_\alpha)\pi}{N+1-C_\alpha}\biggr) - \frac{g'(\xi_n)E_\pi(\thet_n)}{N+1-C_\alpha},
}
where $C_\alpha$ and $E_\pi(\thet)$ are defined in Proposition \ref{prop:eta:big}, and $\xi_n \in (\thet_n, \pi(n-C_\alpha)/(N+1-C_\alpha))$. In particular,
\formula{
C_\alpha = \frac{1}{\pi} \int_0^\infty \frac{t}{(1+t^2)^{3/2}} \, p_\alpha(t) \,dt \leq \int_0^1 \frac{t}{(1+t^2)^{3/2}} \, dt + \frac{1}{2} \int_1^\infty \frac{t}{(1+t^2)^{3/2}} \, dt = 1- \frac{1}{2\sqrt{2}},
}
since the argument of the $\arccot$ function in $p_\alpha(t)$ is positive for $t>1$. We denote the error term above as $E_5(n,N)$ and repeat the arguments used in the previous case to obtain
\formula{
0 < E_5(n,N) \leq C \alpha  \frac{n^{\alpha-1}}{N^\alpha} \biggl( 1-\frac{n}{N+1}\biggr)^4.
}
This completes the proof.
\end{proof}

Finally, we prove Corollary \ref{cor:equation}.

\begin{proof}[Proof of Corollary~\ref{cor:equation}]
    Assume that $2 \leq n \leq N$ and recall the phase equation \eqref{eq:arg:main}:
    \formula[eq:ae:1]{
    (N+1) \thet_n + \eta(\thet_n) = n\pi.
    }
    Let $s_n = g^{-1}(\lambda_n)$. We are looking for the error term $E_2(n,N)$ in the following equation:
    \formula[eq:ae:2]{
    (N+1) s_n + \eta(s_n) = n\pi + E_2(n,N).
    }
    Subtracting \eqref{eq:ae:1} from \eqref{eq:ae:2} yields
    \formula{
    E_2(n,N) = (N+1)(s_n - \thet_n) + (\eta(s_n)-\eta(\thet_n)).
    }
    By the mean value theorem applied to $\eta$, we find that there is $\zeta_n$ between $\thet_n$ and $s_n$ such that
    \formula{
    \eta(s_n) - \eta(\thet_n) = \eta'(\zeta_n) (s_n - \thet_n).
    }
    It follows that
    \formula[eq:E]{
    E_2(n,N) = (N+1+\eta'(\zeta_n))(s_n-\thet_n).
    }
    To estimate the error, we will use Theorem \ref{thm:main}. Note that \eqref{eq:main} is equivalent to
    \formula{
    s_n = g^{-1}(\lambda_n) = g^{-1}(g(\thet_n) + E_1(n,N)).
    }
    The mean value theorem applied to $g$ implies that there is $\xi_n$ between $\thet_n$ and $s_n$ such that
    \formula{
    s_n - \thet_n = \frac{ E_1(n,N)}{g'(\xi_n)}.
    }
    By Theorem \ref{thm:main:2} and \eqref{eq:thet:bounds}, both $s_n$ and $\thet_n$ belong to the interval $[L_n^*, U_n]$, as does $\xi_n$. By \eqref{eq:lem:warunki:2}, $\xi_n \approx n/N$ and $\pi - \xi_n \approx 1-n/(N+1)$. By Lemma \ref{lem:asymptotics}, we have $g'(\xi_n) \approx \alpha (n/N)^{\alpha-1} (1-n/(N+1))$. By this and the definition \eqref{eq:g} of $E_1(n,N)$, we deduce that
    \formula[eq:difference]{
    s_n - \thet_n = \mathscr{O}\biggl( \frac{1}{(2-\alpha)\alpha^{5/2}} \, \frac{n^{-1} N^{-\alpha} (1-n/(N+1))}{(n/N)^{\alpha-1} (1- n/(N+1))}\biggr) = \mathscr{O}\biggl( \frac{1}{(2-\alpha) \alpha^{5/2}} \, \frac{1}{n^\alpha N} \biggr).
    }
    By Proposition~\ref{lem:theta:plus}, $0<N+\alpha/2\leq N+1+\eta'(\zeta_n)\leq N+1$. Combining this with \eqref{eq:E} and \eqref{eq:difference}, we obtain
    \formula{
    |E_2(n,N)|\leq(N+1)|s_n-\thet_n|\leq\frac{C}{(2-\alpha)\alpha^{5/2}n^\alpha},
    }
    as desired.
    
    We treat $n=1$ separately, interpreting $\eta(0)$ by continuity. By Theorem~\ref{thm:main:2} and Proposition~\ref{lem:theta:plus}, $0\leq s_1\leq\pi/(N+1)$ and $0\leq\eta(s_1)\leq(2-\alpha)\pi/4$. Hence,
    \formula{
    -\pi\leq E_2(1,N)=(N+1)s_1+\eta(s_1)-\pi\leq\frac{(2-\alpha)\pi}{4}.
    }
    Thus, $|E_2(1,N)| \leq \pi\leq C/((2-\alpha)\alpha^{5/2})$, since $(2-\alpha)\alpha^{5/2}$ is bounded on $(0,2)$.

    We now verify that $\eta$ coincides with $\eta_{\mathrm{BG}}$ appearing in \cite{bg2022}. Fix $\alpha \in (1,2)$ and $\thet \in (0,\pi)$. Set $n_N=\lfloor(N+1)\thet/\pi\rfloor$. For all sufficiently large $N$, we have $1 \leq n_N\leq N$ and $n_N/N \to \thet/\pi$. By Theorem~\ref{thm:main:2},
    \formula{
    \frac{(n_N-1)\pi}{N}\leq s_{n_N}\leq\frac{n_N\pi}{N+1}.
    }
Both endpoints converge to $\thet$, and hence $s_{n_N}\to\thet$. Moreover, $n_N\geq\eps N$ for some $\eps>0$ and sufficiently large $N$. The first part of the corollary implies that
\formula{
(N+1)s_{n_N}+\eta(s_{n_N})=n_N\pi+E_2(n_N,N)
}
Theorem~1(ii) in \cite{bg2022}, applied to the same eigenvalues, gives
\formula{
(N+1)s_{n_N}+\eta_{\mathrm{BG}}(s_{n_N})=n_N\pi+E_6(N),
}
where $E_6(N) = o(N^{1-\gamma})$ for some $\gamma \in (1,\alpha)$. Subtracting these identities yields
\formula{
\eta(s_{n_N})-\eta_{\mathrm{BG}}(s_{n_N})=E_2(n_N, N)-E_6(N).
}
For $N \to \infty$, the right-hand side converges to zero. Both $\eta$ and $\eta_{\mathrm{BG}}$ are continuous on $[0,\pi]$ and $s_{n_N}\to\thet$. It follows that $\eta(\thet)=\eta_{\mathrm{BG}}(\thet)$. As $\thet\in(0,\pi)$ was arbitrary, the functions coincide on $[0,\pi]$.
\end{proof}

\subsection{Numerical examples}
\label{sec:simulations}

We illustrate the results by comparing four eigenvalue approximations for $N=1000$ and $\alpha\in\{1/2,1,3/2\}$. Numerical comparisons of relative errors for the two-term approximation from Theorem~\ref{thm:main:4} were also presented in \cite{bg2022}, for different matrix sizes and $\alpha \in (1,2)$. Here we additionally consider the approximation obtained from the phase equation in Theorem \ref{thm:main} and the two explicit approximations at the spectral edges, including the cases $\alpha=1/2$ and $\alpha=1$, based on Theorem \ref{thm:main:3}.

\begin{figure}
\centering
\includegraphics[width = \textwidth]{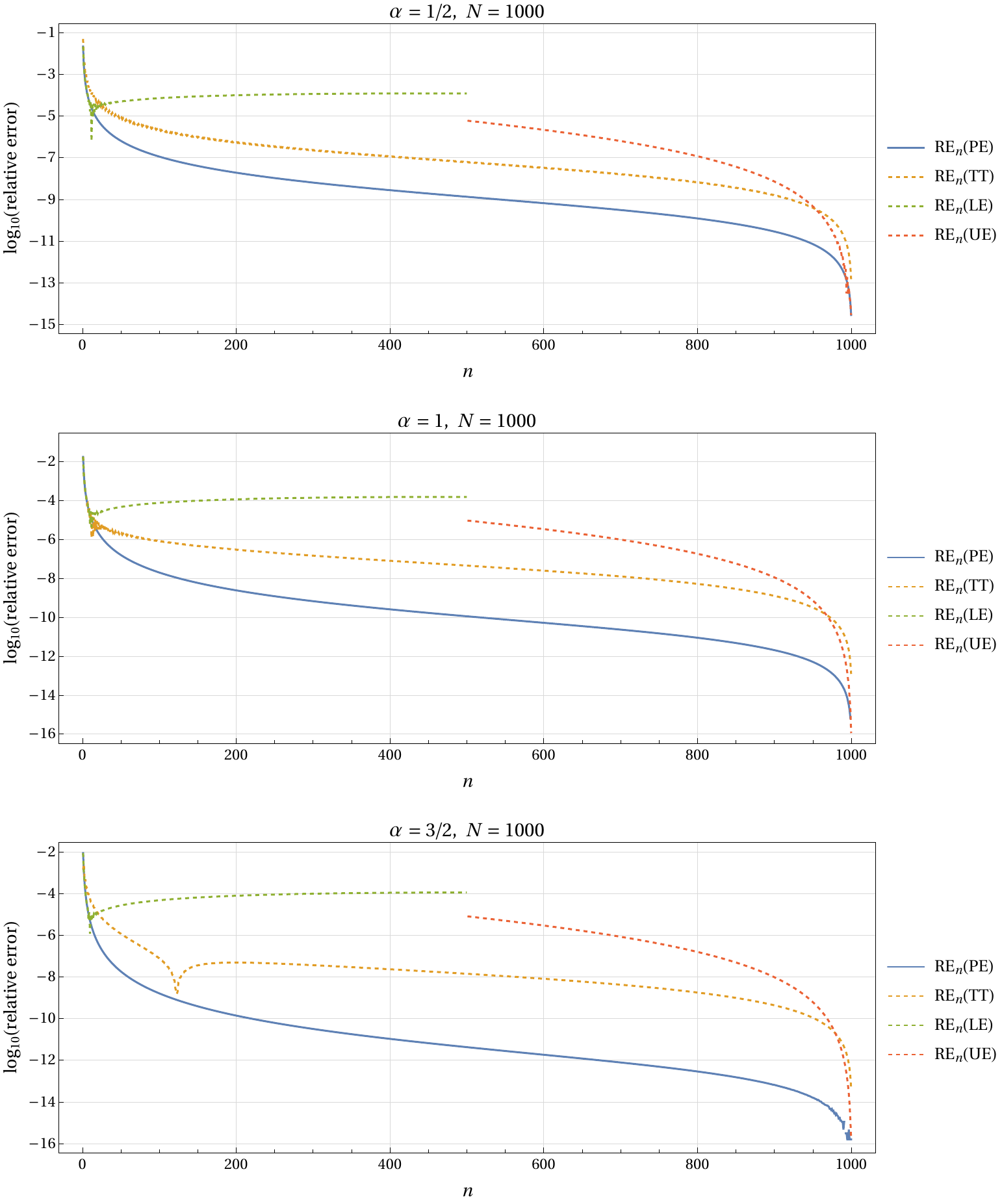}
\caption{Base-10 logarithms of the relative errors for $N=1000$ and $\alpha=1/2$, $1$, and $3/2$ (from top to bottom).}
\label{fig:relative-errors}
\end{figure}

With $U_{n,N}=n\pi/(N+1)$, we denote these approximations by
\formula{
\lambda_n^{\mathrm{PE}} &= g(\thet_n), \\ \lambda_n^{\mathrm{TT}} &= g(U_{n,N})-\frac{g'(U_{n,N})\eta(U_{n,N})}{N+1}, \\ \lambda_n^{\mathrm{LE}} &= g\biggl(\frac{(4n-2+\alpha)\pi}{4N+2\alpha}\biggr), \\ \lambda_n^{\mathrm{UE}} &= g\biggl(\frac{(n-C_\alpha)\pi}{N+1-C_\alpha}\biggr).
}
The approximations PE (phase equation) and TT (two-term) are considered for all $1\leq n\leq N$, while LE (lower edge) and UE (upper edge) are restricted to $n\leq N/2$ and $n>N/2$, respectively. For each approximation, we define the relative error by
\formula{
& \mathrm{RE}_n(A)=\frac{|\lambda_n-\lambda_n^A|}{\lambda_n}& \text{for } A\in\{\mathrm{PE},\mathrm{TT},\mathrm{LE},\mathrm{UE}\}.&
}
Figure~\ref{fig:relative-errors} displays $\log_{10}\mathrm{RE}_n(A)$ over the corresponding index ranges. The reference eigenvalues were computed numerically from $T_N$ in Wolfram Mathematica. The integrals defining $\eta$ and $C_\alpha$ were evaluated by Gaussian quadrature, and the phase equation was solved by fixed-point iteration. All computations used machine precision.

In the examples considered, PE is substantially more accurate than TT over most of the spectrum. For instance, at $n=500$, its relative errors are approximately $1.3\times10^{-9}$, $1.1\times10^{-10}$, and $4.2\times10^{-12}$ for $\alpha=1/2$, $1$, and $3/2$, respectively, whereas the corresponding TT errors are approximately $6.0\times10^{-8}$, $4.5\times10^{-8}$, and $1.4\times10^{-8}$. This comparison illustrates the gain in accuracy obtained by retaining the implicit phase equation. The ordering is not universal: TT can be more accurate for some of the smallest eigenvalues.

The explicit edge approximations exhibit different behaviour. LE closely follows PE for the first few eigenvalues, but becomes less accurate further into the spectrum. UE improves markedly towards the upper edge and eventually becomes more accurate than TT. Thus, the edge formulas provide useful explicit alternatives near the corresponding endpoints, while PE gives accurate approximations across a broader range of indices.

Errors close to machine precision should be interpreted with caution because of rounding errors in the reference eigenvalues and the approximations.

\section*{}

\subsection*{Acknowledgments} I would like to thank my supervisor, Mateusz Kwaśnicki, for his support throughout the preparation of this paper, helpful suggestions and discussions on the method for constructing approximate eigenvectors.

The large language model \textit{GPT-6 Astra} (OpenAI) was used to assist with reviewing the mathematical arguments, checking references, and editing earlier drafts of this article.

\bibliographystyle{abbrv}
\bibliography{bibi}

\end{document}